\title{Wreath products of finite groups by quantum groups}
\author{Amaury Freslon}
\author{Adam Skalski}
\keywords{Quantum groups, wreath products, representation theory}
\subjclass[2010]{20E22, 05E10, 05A18, 20G42}
\address{Amaury Freslon, Laboratoire de Math\'ematiques d'Orsay, Univ. Paris-Sud, CNRS, Universit\'e Paris-Saclay, 91405 Orsay, France}
\address{Adam Skalski,  Institute of Mathematics of the Polish Academy of Sciences, ul. \'{S}niadeckich 8, 00-656 Warszawa, Poland}
\email{amaury.freslon@math.u-psud.fr}
\email{a.skalski@impan.pl}
\date{}
\theoremstyle{plain}
\newtheorem{thm}{Theorem}[section]
\newtheorem{prop}[thm]{Proposition}
\newtheorem{cor}[thm]{Corollary}
\newtheorem{lem}[thm]{Lemma}
\theoremstyle{definition}
\newtheorem{de}[thm]{Definition}
\theoremstyle{remark}
\newtheorem{rem}[thm]{Remark}
\newtheorem{question}[thm]{Question}
\DeclareMathOperator{\ii}{id}
\DeclareMathOperator{\Id}{Id}
\DeclareMathOperator{\Ir}{Irr}
\DeclareMathOperator{\Mor}{Hom}
\DeclareMathOperator{\Span}{Span}
\DeclareMathOperator{\Stab}{Stab}
\DeclareMathOperator{\rl}{rl}
\DeclareMathOperator{\Tr}{Tr}
\newcommand{\bo}{\boldsymbol}
\renewcommand{\tilde}{\widetilde}
\newcommand{\A}{\mathcal{A}}
\newcommand{\C}{\mathbb{C}}
\newcommand{\CC}{\mathcal{C}}
\newcommand{\D}{\Delta}
\newcommand{\G}{\mathbb{G}}
\newcommand{\N}{\mathbb{N}}
\newcommand{\U}{\mathcal{U}}
\newcommand{\V}{\mathcal{V}}
\newcommand{\W}{\mathcal{W}}
\newcommand{\Z}{\mathbb{Z}}
\newcommand{\h}{\widehat}
\newcommand{\crosspart}{
\mathrel{\offinterlineskip
\hbox{$/$}\hskip -1.10ex\hbox{$\backslash$}}}
\begin{document}

\begin{abstract}
We introduce a notion of partition wreath product of a finite group by a partition quantum group, a construction motivated on the one hand by classical wreath products and on the other hand by the free wreath product of J. Bichon. We identify the resulting quantum group in several cases, establish some of its properties and show that when the finite group in question is abelian, the partition wreath product is itself a partition quantum group. This allows us to compute its representation theory, using earlier results of the first named author.
\end{abstract}

\maketitle

\section{Introduction}

When quantum groups first appeared in mathematics in the 1980s (with some developments obviously possible to be traced back to much earlier days), they were described via their "algebras of functions". Soon after that, S.L.\,Woronowicz developed in \cite{woronowicz1988tannaka} his extension of the Tannaka-Krein duality, showing that a compact quantum group can be fully described via its representation theory viewed as a certain $C^{*}$-tensor category (see the book \cite{neshveyev2014compact} for a precise description). This development, far from being purely of theoretical interest, made it possible to construct new examples of quantum groups and study properties of the ones that had already been known. In particular, starting with the article \cite{banica2009liberation} it has become clear that an important role is played in this context by the categories of partitions, which led to the introduction of the so-called easy quantum groups. In fact, the use of the combinatorics of partitions in representation theory has a long history with origins in classical works of R.\,Brauer and H.\,Weyl from the early twentieth century. For the description of these developments and also for a full characterization of partition quantum groups -- of which in a sense canonical examples are the free permutation groups $S_{N}^{+}$ of S.\,Wang (\cite{wang1998quantum}) -- we refer to a recent preprint \cite{freslon2014partition} of the first-named author.

One of the well-known constructions in classical group theory is that of a wreath product, a specific instance of a semidirect product based on the action of a given group $H$ on several copies of another group $G$ by permutations. One often assumes from the beginning that $H$ itself is a group of permutations, as this does not form an essential constraint. In \cite{bichon2004free} J.\,Bichon generalised this construction to the quantum group setting, replacing $G$ by an arbitrary compact matrix quantum group $\G$ and $H$ by a quantum permutation group $\mathbb{H}$, that is a quantum subgroup of a free permutation group of Wang.  This "free" wreath product, $\G \wr_{*} \mathbb{H}$, defined in an algebraic manner, via its "algebra of functions" and the related fundamental representation, has turned out to have several interesting properties, which were later studied by T.\,Banica, J.\,Bichon, F.\,Lemeux, P.\,Tarrago and others (see for example the recent preprint \cite{lemeux2014free}). In particular, the fusion rules -- i.e.\ essentially the representation theory -- of $\G \wr_{*} S_{N}^{+}$ were computed for all compact matrix quantum groups $\G$ of Kac type. One needs to note that the construction of J.\,Bichon, although clearly inspired by the classical wreath product, does not reduce to it even in the case when $\G$ is a classical group and $\mathbb{H}=S_{N}$.

The aim of this paper is the study of a new construction inspired by the above-mentioned works, which we call \emph{partition wreath product}. Instead of defining it through a universal C*-algebra, as J.\,Bichon did for his free wreath products in \cite{bichon2004free}, we choose in a sense a converse path, defining it via the associated C*-tensor category, in the spirit of \cite{banica2009liberation}. The input data consists of a finite group $G$ and a category of partitions $\CC$. The associated  category is built by averaging the morphisms coming from $\CC$ using the group $G$. In fact, there is strong evidence (in particular coming from \cite{lemeux2014free}) that such a construction can be extended, with $G$ replaced by an arbitrary compact (quantum) group. The resulting quantum group is denoted by $G \wr \G_{N}(\CC)$, where $\G_{N}(\CC)$ denotes the $N$-th partition quantum group associated to the category of partitions $\CC$. It is worth noting that this approach to quantum wreath products is suggested by classical results of \cite{bloss2003g} and \cite{parvathi2004g}.

In the particular cases when $\CC$ is the category of all partitions (respectively of all non-crossing partitions),  $\G_{N}(\CC)$ is the permutation group $S_{N}$ (respectively the free permutation quantum group $S_{N}^{+}$) and we recover the usual wreath product $G \wr S_{N}$ (respectively the free wreath product $G \wr_{*} S_{N}^{+}$). We are also able to identify some further cases, so that for example for the quantum hyperoctahedral groups $H_{N}^{+}$ we have $G \wr H_{N}^{+} \simeq (G \times \Z_{2}) \wr_{*} S_{N}^{+}$. A key tool allowing us to study the algebraic and probabilistic properties of the partition wreath product is that of a sudoku representation, introduced in the study of the family of quantum hyperoctahedral groups $H_{N}^{s+}$ (isomorphic to  free wreath product $\Z_{s} \wr_{*} S_{N}^{+}$) by T.\,Banica and R.\,Vergnioux in \cite{banica2009fusion}.

The detailed plan of the paper is as follows: in Section \ref{sec:preliminaries} we introduce some background on compact quantum groups and partitions. We then define the partition wreath product in Section \ref{sec:construction} and study its basic properties. In particular we present there the \emph{sudoku picture} and compute the law of the character of the fundamental representation. We then turn in Section \ref{sec:abelian} to the case of abelian finite groups $G$ for which  we show that the resulting  partition wreath product is itself a partition quantum group in the sense of \cite{freslon2014partition}.  This allows us to use the results of \cite{freslon2014partition} to fully describe the representation theory of $G \wr \G_{N}(\CC)$. Eventually, we introduce in Section \ref{sec:generalisation} some examples of possible generalisations of our setting and discuss certain open questions arising from this work.

\subsection*{Acknowledgements}

The first author was partially supported by the ERC advanced grant "Noncommutative distributions in free probability". The second author was partially supported by the NCN grant 2014/14/E/ST1/00525.

\section{Preliminaries}\label{sec:preliminaries}

In this preliminary section, we gather material on compact quantum groups and the combinatorics of partitions which will be used throughout the paper. Our aim is to give the necessary definitions and results in a concise way, as well as to fix the notations. References for details will be given in the text. All along the paper, scalar products will always be left-linear.

\subsection{Partitions and linear maps}

Our main tool in this work will be partitions of finite sets. The use of these for the study of representation theory has a long history, relying on a particular graphical representation (see for instance \cite{freslon2014partition} for some references). Let $P(k, l)$ be the set of partitions of the set $\{1, 2, \dots, k+l\}$. We represent such partitions in the following way: we draw a line of $k$ points above a line of $l$ points and then connect the points which belong to the same subset of the partition.
This pictorial description makes it in particular easy to work on the \emph{blocks} of the partitions, which we now define.

\begin{de}
Let $p$ be a partition.
\begin{itemize}
\item A maximal set of points which are all connected (i.e. one of the subsets defining the partition) is called a block of $p$.
\item If $b$ contains both upper and lower points (i.e. the subset contains an element of $\{1, \dots, k\}$ and an element of $\{k+1, \dots, k+l\}$), then it is called a \emph{through-block}.
\item Otherwise, it is called a \emph{non-through-block}.
\end{itemize}
We will write $b\subset p$ if $b$ is a block of $p$.
\end{de}

The total number of blocks of $p$ is denoted by $b(p)$ and its number of through-blocks is denoted by $t(p)$. In the present paper, we will be particularly interested in \emph{non-crossing partitions}.

\begin{de}
Let $p$ be a partition. A \emph{crossing} in $p$ is a tuple $k_{1} < k_{2} < k_{3} < k_{4}$ of integers such that:
\begin{itemize}
\item $k_{1}$ and $k_{3}$ are in the same block.
\item $k_{2}$ and $k_{4}$ are in the same block.
\item The four points are \emph{not} in the same block.
\end{itemize}
If there is no crossing in $p$, then it is said to be  a \emph{non-crossing} partition. The set of non-crossing partitions with $k$ upper points and $l$ lower points will be denoted by $NC(k, l)$.
\end{de}

The set of all partitions can be endowed with several operations:
\begin{itemize}
\item  The \emph{tensor product} of two partitions $p\in P(k, l)$ and $q\in P(k', l')$ is the partition $p\otimes q\in P(k+k', l+l')$ obtained by \emph{horizontal concatenation}, i.e. the first $k$ of the $k+k'$ upper points are connected by $p$ to the first $l$ of the $l+l'$ lower points, while $q$ connects the remaining $k'$ upper points to the remaining $l'$ lower points.
\item The \emph{composition} of two partitions $p\in P(l, m)$ and $q\in P(k, l)$ is the partition $pq\in P(k, m)$ obtained by \emph{vertical concatenation}: we connect $k$ upper points by $q$ to $l$ middle points and then continue the lines by $p$ to $m$ lower points. This yields a partition connecting $k$ upper points with $m$ lower points. By the composition procedure, certain loops might appear resulting from blocks around the middle points. More precisely, consider the set $L$ of elements in $\{1, \dots, l\}$ which are not connected to a lower point of $p$ nor to an upper point of $q$. The upper row of $p$ and the lower row of $q$ both induce partitions of the set $L$. The maximum (with respect to inclusion of blocks) of these two partitions is the \emph{loop partition} of $L$, its blocks are called \emph{loops} and their number is denoted by $\rl(p, q)$. To finish the operation, we remove all the loops in order to produce a partition in $P(k, m)$.
\item The \emph{involution} of a partition $p\in P(k, l)$ is the partition $p^{*}\in P(l, k)$ obtained by flipping $p$ upside down.
\item We also have a \emph{rotation} on partitions. Let $p\in P(k, l)$ be a partition connecting $k$ upper points with $l$ lower points. Rotating the leftmost upper point to the left of the lower row (or the converse) gives rise to a partition in $P(k-1, l+1)$ (or in $P(k+1, l-1)$), called a \emph{rotated version} of $p$. Rotation may also be performed on the right.
\end{itemize}
These operations are called the \emph{category operations}. We will be interested in collections of partitions which are stable under these operations.

\begin{de}\label{de:categroypartitions}
A collection $\CC$ of subsets $\CC(k, l)\subseteq P(k, l)$ (for every $k, l\in\N$) is a \emph{category of partitions} if it is stable under all the category operations and if the \emph{identity partition} $\vert\in P(1, 1)$ is in $\CC(1, 1)$.
\end{de}

The interplay between partitions and quantum groups is based on a natural way of associating linear maps to partitions.

\begin{de}
Let $N\geqslant 1$ be an integer and let $(e_{1}, \dots, e_{N})$ be a basis of $\C^{N}$. For any partition $p$, we define a linear map
\begin{equation*}
T_{p}:(\C^{N})^{\otimes k} \mapsto (\C^{N})^{\otimes l}
\end{equation*}
by the following formula:
\begin{equation*}
T_{p}(e_{i_{1}} \otimes \dots \otimes e_{i_{k}}) = \sum_{j_{1}, \dots, j_{l} = 1}^{n} \delta_{p}((i_{1}, \dots, i_{k}), (j_{1}, \dots, j_{l}))e_{j_{1}} \otimes \dots \otimes e_{j_{l}},
\end{equation*}
where $\delta_{p}((i_{1}, \dots, i_{k}), (j_{1}, \dots, j_{l})) = 1$ if and only if all the strings of the partition $p$ connect equal indices of the multi-index $(i_{1}, \dots, i_{k})$ in the upper row with equal indices of the multi-index $(j_{1}, \dots, j_{l})$ in the lower row. Otherwise, $\delta_{p}((i_{1}, \dots, i_{k}), (j_{1}, \dots, j_{l})) = 0$.
\end{de}

We will have to do several computations involving multi-indices as in the formula above. We therefore introduce some notations to simplify these expressions. We will use bold letters  to denote multi-indices, for instance $\bo{i} = (i_{1}, \dots, i_{k})$. We also set $e_{\bo{i}} = e_{i_{1}} \otimes \dots \otimes e_{i_{k}}$. The definition of the map $T_{p}$ then reads
\begin{equation*}
T_{p}(e_{\bo{i}}) = \sum_{\bo{j}}\delta_{p}(\bo{i}, \bo{j}) e_{\bo{j}}.
\end{equation*}

The interplay between the category operations on partitions and the assignment $p \mapsto T_{p}$ was studied by T. Banica and R. Speicher in \cite[Prop. 1.9]{banica2009liberation}. It can be summarized as follows:

\begin{prop}\label{prop:compositionTp}
The assignment $p\mapsto T_{p}$ satisfies
\begin{enumerate}
\item $T_{p}^{*} = T_{p^{*}}$
\item $T_{p}\otimes T_{q} = T_{p\otimes q}$
\item $T_{p}T_{q} = N^{\rl(p, q)}T_{pq}$
\end{enumerate}
\end{prop}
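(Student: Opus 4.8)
The plan is to prove all three identities by working directly with the scalar coefficients $\delta_{p}$, reading each one as the indicator that a pair of multi-indices defines a colouring of the points of $p$ (by elements of $\{1,\dots,N\}$) which is constant on every block. Fixing the orthonormal basis $(e_{1},\dots,e_{N})$, the matrix coefficient of $T_{p}$ between $e_{\bo{i}}$ and $e_{\bo{j}}$ is precisely the real number $\delta_{p}(\bo{i}, \bo{j})$, and $\delta_{p}(\bo{i}, \bo{j}) = 1$ exactly when the assignment of $\bo{i}$ to the upper points and $\bo{j}$ to the lower points takes a single value on each block of $p$. With this dictionary in place, (1) and (2) are immediate and (3) reduces to a counting argument.

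For (1), since the coefficients are real, the coefficient of $T_{p}^{*}$ between $e_{\bo{j}}$ and $e_{\bo{i}}$ equals $\delta_{p}(\bo{i}, \bo{j})$ as well. On the other hand $p^{*}$ is $p$ flipped upside down, so it has the same blocks with the roles of upper and lower rows interchanged; hence $\delta_{p^{*}}(\bo{j}, \bo{i}) = \delta_{p}(\bo{i}, \bo{j})$, which gives the identity. For (2), horizontal concatenation means that the blocks of $p \otimes q$ are exactly the blocks of $p$ together with those of $q$, supported on disjoint sets of points. The colouring condition therefore factorises as $\delta_{p \otimes q}((\bo{i}, \bo{i}'), (\bo{j}, \bo{j}')) = \delta_{p}(\bo{i}, \bo{j})\,\delta_{q}(\bo{i}', \bo{j}')$, which is precisely the matrix coefficient of $T_{p} \otimes T_{q}$.

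The substance is (3). Writing $q \in P(k, l)$ and $p \in P(l, m)$ and expanding gives
\begin{equation*}
T_{p}T_{q}(e_{\bo{i}}) = \sum_{\bo{r}} \Big( \sum_{\bo{j}} \delta_{q}(\bo{i}, \bo{j})\,\delta_{p}(\bo{j}, \bo{r}) \Big) e_{\bo{r}},
\end{equation*}
where $\bo{j}$ ranges over all $l$-tuples indexing the middle points. So it suffices to show that for fixed $\bo{i}, \bo{r}$ the inner sum equals $N^{\rl(p, q)}\delta_{pq}(\bo{i}, \bo{r})$. The key observation is that a single summand $\delta_{q}(\bo{i}, \bo{j})\delta_{p}(\bo{j}, \bo{r})$ equals $1$ exactly when the colouring of the vertically stacked diagram (upper row $\bo{i}$, middle row $\bo{j}$, lower row $\bo{r}$) is constant on each connected component obtained by joining the blocks of $q$ to the blocks of $p$ along the $l$ middle points. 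These components split into two kinds: those meeting the top or bottom row, which are exactly the blocks of the composite $pq$, and those contained entirely in the middle row, which are by definition the $\rl(p, q)$ loops.

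I would then split into two cases. If $\delta_{pq}(\bo{i}, \bo{r}) = 0$, some block of $pq$, hence some through-component of the stack, receives a non-constant colouring from $(\bo{i}, \bo{r})$, so no extension to $\bo{j}$ can make a summand nonzero and the inner sum vanishes. If $\delta_{pq}(\bo{i}, \bo{r}) = 1$, then on each through-component the colours of the middle points are forced by the values already fixed by $\bo{i}, \bo{r}$, contributing nothing to the count, whereas each of the $\rl(p, q)$ loops may be coloured by any of the $N$ values freely and independently, giving exactly $N^{\rl(p, q)}$ admissible choices of $\bo{j}$. Combining the two cases yields the inner sum $N^{\rl(p, q)}\delta_{pq}(\bo{i}, \bo{r})$ and completes the proof. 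I expect the only genuinely delicate point to be the bookkeeping in this last step: one must verify carefully that deleting the loops in the definition of $pq$ leaves precisely the through-components as the blocks of $pq$, and that the loop partition, defined as the join of the partitions induced on the isolated middle points by the lower row of $q$ and the upper row of $p$, indexes exactly the free colour choices. Everything else is a routine translation of the category operations into the language of colourings.
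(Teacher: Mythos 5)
Your proof is correct. The paper does not prove this proposition at all --- it simply quotes it from \cite[Prop 1.9]{banica2009liberation} --- and your argument is a faithful, complete reconstruction of the standard proof from that reference: reading $\delta_{p}$ as the indicator of a block-constant colouring, the involution and tensor identities are immediate, and the composition formula follows from the observation that the free choices of the middle multi-index $\bo{j}$ are indexed exactly by the $\rl(p,q)$ loops, each contributing a factor of $N$.
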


The case when the partitions are non-crossing will be important later on because of the following Proposition (see for instance \cite[Lem 4.16]{freslon2013representation} for a proof):

\begin{prop}\label{prop:linearindependence}
Let $N\geqslant 4$ be an integer and let $k, l\in \N$. Then, the linear maps $(T_{p})_{p\in NC(k, l)}$ are linearly independent.
\end{prop}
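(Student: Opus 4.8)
The plan is to reformulate linear independence as the non-degeneracy of a Gram matrix. Endow the space $\mathcal{L}\big((\C^{N})^{\otimes k}, (\C^{N})^{\otimes l}\big)$ with the (left-linear) Hilbert--Schmidt inner product $\langle S, T\rangle = \Tr(T^{*}S)$. Since a finite family of vectors in an inner product space is linearly independent if and only if its Gram matrix is invertible, it is enough to show that
\[
G = \left(\langle T_{p}, T_{q}\rangle\right)_{p, q\in NC(k, l)}
\]
is non-singular whenever $N\geqslant 4$.

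First I would compute the entries of $G$. As the coefficients $\delta_{p}(\bo{i}, \bo{j})$ are real and lie in $\{0, 1\}$, one has $\langle T_{p}, T_{q}\rangle = \sum_{\bo{i}, \bo{j}}\delta_{p}(\bo{i}, \bo{j})\delta_{q}(\bo{i}, \bo{j})$, which counts the multi-indices $(\bo{i}, \bo{j})\in\{1, \dots, N\}^{k+l}$ that are constant along the blocks of $p$ and of $q$ simultaneously, that is, along the blocks of the join $p\vee q$ in the lattice of all partitions of $\{1, \dots, k+l\}$. Hence $G_{pq} = N^{b(p\vee q)}$; the same formula can be extracted from Proposition~\ref{prop:compositionTp}.

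Next I would expose a factorisation of $G$ that isolates the role of the hypothesis. Write $n = k+l$ and group the multi-indices according to their kernel partition $\sigma$ (two coordinates lying in the same block of $\sigma$ exactly when they carry equal values). Summing over $\sigma$ yields $G = A D A^{\mathsf{T}}$, where $A$ is the incidence matrix $A_{p, \sigma} = [\sigma\geqslant p]$, with rows indexed by $p\in NC(k, l)$ and columns by the partitions $\sigma$ of $\{1, \dots, n\}$, and $D$ is the diagonal matrix whose entry at $\sigma$ is $N(N-1)\cdots(N - b(\sigma) + 1)$, the number of multi-indices with kernel exactly $\sigma$. Since $D$ has non-negative entries, $G$ is positive semi-definite, and from $G = ADA^{\mathsf{T}}$ one sees that $G$ is non-singular precisely when the rows of $A$ corresponding to the columns on which $D$ is strictly positive are linearly independent. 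As that entry vanishes exactly when $b(\sigma) > N$, the problem reduces to the following purely combinatorial statement: the rows indexed by $NC(k, l)$ of the incidence matrix of the partition lattice stay linearly independent after deleting every column $\sigma$ with more than $N$ blocks.

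This last full-rank assertion is the heart of the argument and the step I expect to be the main obstacle. With all columns present the rows are independent, because the zeta matrix of the partition lattice is triangular with ones on the diagonal once the partitions are listed in an order compatible with refinement; the difficulty is that removing the columns $\sigma$ with $b(\sigma) > N$ destroys the evident pivots of the non-crossing partitions having more than $N$ blocks, the extreme case being the all-singletons partition. Pinning down exactly when the surviving submatrix retains full row rank is where the value $4$ enters: one verifies, through an explicit M\"obius/determinant computation on the non-crossing partition lattice, that $\det G$ factorises as a product of polynomials in $N$ whose real roots all lie strictly below $4$ (the obstructing values being of the form $4\cos^{2}(\pi/m)$), so that $\det G\neq 0$ for every integer $N\geqslant 4$. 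I expect this determinant estimate to be the only genuinely delicate point; it is moreover sharp, since for $N = 2, 3$ one has $S_{N}^{+} = S_{N}$ and extra linear relations between the maps $T_{p}$ indeed occur.
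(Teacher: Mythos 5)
Your reduction is sound and is, in spirit, the standard route: the Hilbert--Schmidt Gram matrix is indeed $G_{pq}=N^{b(p\vee q)}$ (with the join taken in the full partition lattice), the factorisation $G=ADA^{\mathsf{T}}$ over kernel partitions with $D_{\sigma}=N(N-1)\cdots(N-b(\sigma)+1)$ is correct, and it correctly turns the question into the invertibility of a Gram determinant. Note, however, that the paper does not prove this proposition at all --- it cites \cite[Lem 4.16]{freslon2013representation} --- so the burden of your write-up is precisely to supply what that reference supplies, and this is where the proposal stops short: the assertion that $\det G$ factorises into polynomials whose real roots are all of the form $4\cos^{2}(\pi/m)<4$ is exactly the content of the proposition, and ``one verifies, through an explicit M\"obius/determinant computation'' is not a proof. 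That determinant evaluation is a genuinely nontrivial theorem (the meander/Gram determinant of Tutte and Di Francesco), not a routine triangularity or M\"obius-inversion argument on $NC$; in particular your observation that the zeta matrix of the partition lattice is unitriangular gives independence of the rows of the \emph{full} matrix $A$, but says nothing about the submatrix obtained after deleting the columns with $b(\sigma)>N$, and no elementary pivot argument survives that deletion.

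The standard way to close the gap, and essentially what the cited lemma does, is to avoid computing the determinant on all of $NC$ directly: the ``fattening'' bijection $p\mapsto\tilde{p}$ from $NC(k,l)$ onto the non-crossing pair partitions $NC_{2}(2k,2l)$ transforms the Gram matrix at parameter $N$ into the Temperley--Lieb Gram matrix at loop parameter $\delta=\sqrt{N}$ (because $b(p\vee q)$ for partitions corresponds to the number of loops in $\tilde{p}^{*}\tilde{q}$ for pairings). One then invokes the known Temperley--Lieb Gram determinant, a product of quotients of Chebyshev-type polynomials $P_{m}(\delta)$ that are strictly positive for $\delta\geqslant 2$ and vanish only at $\delta=2\cos(j\pi/m)$; this is what produces the obstructing values $N=4\cos^{2}(j\pi/m)<4$ you allude to, and hence non-vanishing for every integer $N\geqslant 4$. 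If you either carry out the fattening reduction and cite the Temperley--Lieb determinant formula, or simply cite the linear-independence statement as the paper does, the argument is complete; as written, the decisive step is named but not established.
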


\subsection{Compact quantum groups}

The aim of this work is to use partitions to produce compact quantum groups as defined by S.L. Woronowicz in \cite{woronowicz1995compact}. We therefore recall some basic definitions and results of this theory. 

\begin{de}
A \emph{compact quantum group} is a pair $\G = (C(\G), \D)$ where $C(\G)$ is a unital C*-algebra,
\begin{equation*}
\D: C(\G)\rightarrow C(\G)\otimes C(\G)
\end{equation*}
is a unital $*$-homomorphism such that $(\D\otimes \ii)\circ\D = (\ii \otimes \D)\circ \D$, and the linear spans of $\D(C(\G))(1\otimes C(\G))$ and $\D(C(\G))(C(\G)\otimes 1)$ are dense in $C(\G)\otimes C(\G)$ (all the tensor products of C*-algebras are spatial).
\end{de}

Compact quantum groups have a rich and well-behaved representation theory. In particular, finite-dimensional representations carry enough information to recover the whole quantum group, which is the reason why we will focus on these.

\begin{de}
Let $\G$ be a compact quantum group and let $n\in \N$. A \emph{representation} of $\G$ of dimension $n$ is a matrix $(u_{ij})_{1\leqslant i, j\leqslant n}\in M_{n}(C(\G)) \simeq C(\G) \otimes M_{n}(\C)$ such that
\begin{equation*}
\D(u_{ij}) = \sum_{k=1}^{n} u_{ik}\otimes u_{kj}
\end{equation*}
for every $1 \leqslant i, j \leqslant n$. Moreover,
\begin{itemize}
\item the representation $u$ is said to be \emph{unitary} if it is a unitary element of $M_{n}(C(\G))$.
\item the \emph{contragredient} $\overline{u}$ of $u$ is the representation defined by $\overline{u}_{ij} = u_{ij}^{*}$.
\end{itemize}
\end{de}

An \emph{intertwiner} between two representations $u$ and $v$ of dimension respectively $n$ and $m$ is a linear map $T: \C^{n} \rightarrow \C^{m}$ such that
\begin{equation*}
(\ii \otimes T)u = v(\ii \otimes T).
\end{equation*}
The set of intertwiners between $u$ and $v$ is denoted by $\Mor_{\G}(u, v)$, or simply $\Mor(u, v)$ if there is no ambiguity. If there exists a unitary intertwiner between $u$ and $v$, then $u$ and $v$ are said to be \emph{unitarily equivalent}. A representation $u$ is said to be \emph{irreducible} if $\Mor(u, u) = \C.\ii$. The \emph{tensor product} of two representations $u$ and $v$ is the representation
\begin{equation*}
u\otimes v = u_{12}v_{13}\in C(\G)\otimes M_{n}(\C)\otimes M_{m}(\C) \simeq C(\G)\otimes M_{nm}(\C),
\end{equation*}
where we used the \emph{leg-numbering} notation: for an operator $X$ acting on a twofold tensor product, $X_{ij}$ is the extension of $X$ acting on the $i$-th and $j$-th tensors of a triple tensor product. As for compact groups, it is possible to reconstruct a compact quantum group from a C*-tensor category equipped with some additional structure. This is S.L. Woronowicz's Tannaka-Krein theorem proved in \cite{woronowicz1988tannaka}. In particular, starting with a category of partitions, it is possible to build such a C*-tensor category. The associated compact quantum group will then be completely determined by the combinatorics of the initial set of partitions. Let us state this result in the spirit of \cite[Prop 3.12]{banica2009liberation}.

\begin{thm}
Let $N\geqslant 1$ be an integer and let $\CC$ be a category of partitions. Then, there exists a compact quantum group $\G$ together with a finite-dimensional unitary representation $u$ such that
\begin{itemize}
\item The coefficients of $u$ generate a dense subalgebra of $C(\G)$
\item For any $k, l\in \N$, $\Mor_{\G}(u^{\otimes k}, u^{\otimes l}) = \Span\{T_{p}, p\in \CC(k, l)\}$
\end{itemize}
Moreover, the group $\G$ is unique up to isomorphism and is denoted by $\G_{N}(\CC)$.
\end{thm}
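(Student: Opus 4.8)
The plan is to recognise the family of subspaces $\mathcal{C}_{N}(k,l) := \Span\{T_{p} : p\in\CC(k,l)\}$, consisting of linear maps from $(\C^{N})^{\otimes k}$ to $(\C^{N})^{\otimes l}$, as the morphism spaces of a concrete rigid C*-tensor category generated by the single object $H = \C^{N}$, and then to feed this category into S.L.\,Woronowicz's Tannaka-Krein theorem \cite{woronowicz1988tannaka} in order to produce $\G$ and $u$. The point is that each axiom of such a category translates, through the dictionary of Proposition \ref{prop:compositionTp}, into one of the defining closure properties of a category of partitions. Concretely, first I would check that closure of $(\mathcal{C}_{N}(k,l))$ under taking adjoints follows from $T_{p}^{*} = T_{p^{*}}$ and the stability of $\CC$ under involution; closure under tensor products follows from $T_{p}\otimes T_{q} = T_{p\otimes q}$ and stability under $\otimes$; and closure under composition follows from $T_{p}T_{q} = N^{\rl(p,q)}T_{pq}$ together with stability under composition, the positive scalar $N^{\rl(p,q)}$ being immaterial once we pass to linear spans. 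The identity morphism on $H$ is $T_{\vert} = \ii_{\C^{N}}$, which lies in $\mathcal{C}_{N}(1,1)$ because $\vert\in\CC(1,1)$, and its tensor powers furnish the identities on all $(\C^{N})^{\otimes k}$.

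It then remains to check rigidity, that is, that $H$ possesses a conjugate object. Applying the rotation operation to the identity partition $\vert$ produces the ``cup'' partition in $P(0,2)$ and the ``cap'' partition in $P(2,0)$; since $\CC$ is stable under rotation, both lie in $\CC$, and the associated maps $T$ (namely $1\mapsto\sum_{i}e_{i}\otimes e_{i}$ and its adjoint) satisfy the conjugate equations, exhibiting $H$ as its own conjugate. Thus $(\mathcal{C}_{N}(k,l))$ is a genuine concrete rigid C*-tensor category on one self-dual generator, and Woronowicz's theorem applies: it yields a compact quantum group $\G$ together with a unitary representation $u$ of dimension $N$ whose coefficients generate a dense $*$-subalgebra of $C(\G)$ and whose intertwiner spaces $\Mor_{\G}(u^{\otimes k}, u^{\otimes l})$ are precisely the prescribed spaces $\mathcal{C}_{N}(k,l)$. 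Uniqueness up to isomorphism is likewise part of the conclusion of the Tannaka-Krein theorem, since the quantum group generated by a fixed representation category with a distinguished fundamental object is determined by it.

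The genuinely easy half of the second bullet point is the inclusion $\Span\{T_{p}\}\subseteq\Mor_{\G}(u^{\otimes k}, u^{\otimes l})$, which merely records that each $T_{p}$ is imposed as an intertwiner. The substantive content --- that there are \emph{no further} intertwiners, so that the inclusion is an equality --- is exactly what Woronowicz's reconstruction guarantees, and this is where the main work is hidden. I therefore expect the principal obstacle to lie not in the partition combinatorics, which are dispatched by Proposition \ref{prop:compositionTp}, but in verifying cleanly that the hypotheses of the Tannaka-Krein theorem are met (in particular finite-dimensionality and closedness of the morphism spaces, which here are automatic since they are spans of finitely many maps inside a finite-dimensional space, together with the existence of conjugates just established) so that the theorem returns the category unchanged. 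For an explicit self-contained treatment in this partition setting one may follow \cite[Prop.\ 3.12]{banica2009liberation}.
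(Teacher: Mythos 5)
Your argument is correct and is exactly the standard route the paper itself relies on: the paper states this theorem without proof, citing Woronowicz's Tannaka--Krein duality and \cite[Prop 3.12]{banica2009liberation}, and your verification that the spans $\Span\{T_{p}\}$ form a concrete rigid C*-tensor category (via Proposition \ref{prop:compositionTp}, the identity partition, and the cup/cap obtained by rotating $\vert$) followed by an application of the reconstruction theorem is precisely the content of those references. Nothing to add.
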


Let us mention that the quantum groups defined by this theorem, as well as all the ones that we are going to study in this work, are of \emph{Kac type} (see \cite[Thm 1.5]{woronowicz1995compact} for a list of equivalent characterizations of this property).

\begin{rem}
We will need several times a notion of quantum subgroup -- if $\G$ and $\mathbb{H}$ are compact quantum groups, then we say that $\mathbb{H}$ is a (closed) quantum subgroup of $\G$ if there exists a unital surjective $*$-homomorphism from $C(\G)$ onto $C(\mathbb{H})$ intertwining the respective coproducts (precisely speaking we require that the morphism acts on the universal level -- for a detailed discussion we refer for
example to \cite{daws2012closed}).
\end{rem}

\subsection{Wreath products}\label{subsec:wreath}

As explained in the introduction, we will study generalisations of the wreath product construction. We therefore briefly recall the classical construction, as well as its free (quantum) counterpart due to J.\,Bichon.

\begin{de}
Let $G$ be a group and let $N\geqslant 1$ be an integer. The \emph{(permutational) wreath product} of $\G$ by $S_{N}$ is the semi-direct product $G\wr S_{N} = G^{N}\rtimes S_{N}$, where the action of $S_{N}$ on $G^{N}$ is given by permutation of the factors.
\end{de}

The construction above can be extended by replacing $S_N$ by any group acting on the set $\{1,\ldots,N\}$ by permutations. There are several examples of groups which can be decomposed as wreath products. In particular the complex reflection groups $G(s, 1, N)$ are isomorphic to $\Z_{s}\wr S_{N}$. The main drawback of this definition from a quantum group perspective is the appearance of the semi-direct product, for which there is no good analogue when $S_{N}$ is replaced by a compact quantum group. To remedy this, note that abstractly speaking, an element of $G\wr S_{N}$ may be seen as the product of a permutation matrix and a diagonal matrix with coefficients in $G$, the matrix product then inducing the group law. To make this more precise, we will resort to a specific "permutation representation" of $G\wr S_{N}$. Let us consider the vector space $V$ spanned by vectors $(e_{i}^{g})_{1\leqslant i\leqslant N, g\in G}$. For each $1\leqslant i\leqslant N$, we define a representation $\rho_{i}$ of $G$ by
\begin{equation*}
\rho_{i}(g)e_{j}^{h} = \left\{
\begin{array}{ccc}
e_{j}^{gh} & \text{if} & i = j \\
e_{j}^{h} & \text{if} & i \neq j
\end{array}
\right.
\end{equation*}
for any $1\leqslant j\leqslant N$ and $h\in G$. We also define a representation $\pi$ of $S_{N}$ on $V$ by $\pi(\sigma)(e_{i}^{g}) = e_{\sigma^{-1}(i)}^{g}$, $1\leqslant i\leqslant N, g \in G$. This gives us a new characterization of the wreath product.

\begin{prop}
The subgroup of $GL(V)$ generated by $\pi(S_{N})$ and $\rho_{i}(G)$ for $1\leqslant i\leqslant N$ is isomorphic to the wreath product $G\wr S_{N}$.
\end{prop}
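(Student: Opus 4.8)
The plan is to realise the subgroup $K\subseteq GL(V)$ generated by $\pi(S_{N})$ and the $\rho_{i}(G)$ as an internal semidirect product $D\rtimes P$, where $D$ is the subgroup generated by all the $\rho_{i}(G)$ and $P=\pi(S_{N})$, and then to identify this semidirect product with $G\wr S_{N}=G^{N}\rtimes S_{N}$. To this end I would check four things: that $D\cong G^{N}$, that $P\cong S_{N}$, that $P$ normalises $D$ with the conjugation action permuting the $N$ factors of $G^{N}$, and that $D\cap P=\{1\}$. Writing $V_{i}=\Span\{e_{i}^{g}:g\in G\}$, so that $V=\bigoplus_{i=1}^{N}V_{i}$, the whole argument rests on the observation that $\rho_{i}(g)$ preserves each $V_{j}$, acting trivially on $V_{j}$ for $j\neq i$ and as the left regular representation of $G$ on $V_{i}$, while $\pi(\sigma)$ maps $V_{i}$ onto $V_{\sigma^{-1}(i)}$.

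First I would treat $D$. Since $\rho_{i}(G)$ and $\rho_{j}(G)$ act on different blocks for $i\neq j$, they commute, so the map $(g_{1},\dots,g_{N})\mapsto\rho_{1}(g_{1})\cdots\rho_{N}(g_{N})$ is a well-defined group homomorphism $G^{N}\to GL(V)$ with image $D$. Its injectivity follows by restricting to a single block: if the product is the identity then its restriction to $V_{j}$ is $\rho_{j}(g_{j})|_{V_{j}}$, which equals the identity only when $g_{j}=e$ because the left regular representation is faithful. Hence $D\cong G^{N}$. Likewise $\pi$ is injective, since if $\pi(\sigma)$ fixes every $e_{i}^{g}$ then $\sigma^{-1}(i)=i$ for all $i$, so $P=\pi(S_{N})\cong S_{N}$ (one should note that with the stated convention $\pi$ composes contravariantly, but its image is nonetheless a subgroup isomorphic to $S_{N}$, for instance via $\sigma\mapsto\pi(\sigma^{-1})$).

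The key step, and the one I expect to carry the real content, is the conjugation relation. A direct computation on the basis should give $\pi(\sigma)\rho_{i}(g)\pi(\sigma)^{-1}=\rho_{\sigma^{-1}(i)}(g)$ for all $\sigma\in S_{N}$, all $i$ and all $g$. From this formula everything falls into place: conjugation by any $\pi(\sigma)$ sends the generators of $D$ to generators of $D$, so $P$ normalises $D$, the set $K=DP$ is a subgroup in which $D$ is normal, and $K$ is generated by $\pi(S_{N})$ and the $\rho_{i}(G)$ as required. Moreover, transporting the relation through the isomorphism $D\cong G^{N}$ shows that $\pi(\sigma)$ acts on $G^{N}$ simply by permuting the factors, which is exactly the action defining the wreath product.

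It then remains to see that the product $D\rtimes P$ is internal, i.e.\ that $D\cap P=\{1\}$. Here I would use that every element of $D$ preserves each block $V_{i}$, whereas $\pi(\sigma)$ sends $V_{i}$ to $V_{\sigma^{-1}(i)}$; an element lying in both subgroups must therefore preserve all blocks, forcing $\sigma=e$ and hence $\pi(\sigma)=\Id$. Combining the four points yields $K=D\rtimes P\cong G^{N}\rtimes S_{N}=G\wr S_{N}$, with the semidirect product structure given precisely by the permutation action. The only genuinely delicate point is the conjugation computation together with the careful handling of the permutation conventions; the remaining verifications are routine bookkeeping.
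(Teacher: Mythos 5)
Your argument is correct and follows essentially the same route as the paper: the paper's (two-line) proof likewise identifies the subgroup generated by the $\rho_{i}(G)$ with the diagonal copy of $G^{N}$ using the commutation of $\rho_{i}$ and $\rho_{j}$ for $i\neq j$, and then invokes the permutation action of $\pi(S_{N})$. Your write-up simply makes explicit the verifications (faithfulness, the conjugation formula $\pi(\sigma)\rho_{i}(g)\pi(\sigma)^{-1}=\rho_{\sigma^{-1}(i)}(g)$, and triviality of the intersection) that the paper leaves to the reader.
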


\begin{proof}
Noticing that the representations $\rho_{i}$ and $\rho_{j}$ commute when $i\neq j$, we can identify the subgroup generated by the images of $\rho_{i}(G)$ for all $1\leqslant i\leqslant N$ with the group of diagonal $N\times N$ matrices with coefficients in $G$. Since $\pi$ is just the permutation representation of $S_{N}$, the result is clear.
\end{proof}

\begin{rem}
The proposition above suggests a way of defining a wreath product of a group $G$ by an arbitrary linear group $H\subset GL(N)$ ($N \in \mathbb{N}$). Indeed, we can repeat the construction above and define the "linear" wreath product $G \wr H$ to be the subgroup of $GL(V)$ generated by $\pi(H)$ and $\rho_{i}(G)$ for $1\leqslant i\leqslant N$, where $\pi$ denotes an appropriate amplification of the identity representation of $H$. The above proposition implies that when $H \subset S_N$, then we obtain the usual (permutational) wreath product. The precise relation of this construction with the partition wreath products we introduce in the next section is not clear.
\end{rem}

Schur-Weyl duality for wreath products by $S_{N}$ has a combinatorial description using partitions,  introduced by M. Bloss in \cite{bloss2003g}. This description was reformulated in a setting closer to ours by A.J.\,Kennedy and M.\,Parvathi in \cite{parvathi2004g}. There is also a quantum analogue of the wreath product, introduced by J. Bichon in \cite{bichon2004free}, which we now describe. It involves the \emph{free (or quantum) permutation group} $S_{N}^{+}$ defined by S.\,Wang in \cite{wang1998quantum}.

\begin{de}
Let $\G$ be a compact quantum group with a fundamental representation $u$ and let $v$ be the fundamental representation of $S_{N}^{+}$. Consider the free product $C(\G)^{\ast N}\ast C(S_{N}^{+})$ and let, for $1\leqslant k\leqslant N$, $\nu_{k}$ be the inclusion of $C(\G)$ as the $k$-th factor of the free product. The \emph{free wreath product algebra} is the quotient of $C(\G)^{\ast N}\ast C(S_{N}^{+})$ by the relations
\begin{equation*}
\nu_{i}(a)v_{ij} = v_{ij}\nu_{i}(a)
\end{equation*}
for all $a\in C(\G)$ and $1\leqslant i, j\leqslant N$. By \cite[Thm 3.2]{bichon2004free}, the free wreath product algebra admits a natural compact quantum group structure, called the \emph{free wreath product} of $\G$ by $S_{N}^{+}$ and denoted by $\G\wr_{\ast}S_{N}^{+}$.
\end{de}

The representation theory of free wreath products was studied by F. Lemeux and P. Tarrago in \cite{lemeux2014free} using partition methods.

\section{Partition wreath product}\label{sec:construction}

In this section we introduce our construction of partition wreath products. We will give several characterizations of these objects so as to be able to study them and compare them to known constructions. The basic ingredients for the construction are a finite group $G$ and a category of partitions $\CC$ and we will therefore use these symbols without further reference. We will moreover denote by $e$ the neutral element of $G$.

\subsection{Averaged partitions}

The construction starts by colouring the partitions in $\CC$ with the elements of $G$. This means that we now consider partitions with the additional data of an element of $G$ attached to each point. The set of all such partitions will be denoted by $\CC^{G}$.

\begin{de}
Let $k, l\in \N$ and let $p\in \CC^{G}(k, l)$. The \emph{upper colouring} of $p$ is the sequence $\bo{c}_{u}(p)$ of colours of the upper points of $p$, read from left to right. Similarly, the \emph{lower colouring} of $p$ is the sequence $\bo{c}_{d}(p)$ of colours of the lower points, also read from left to right.
\end{de}

The category operations can be extended to $\CC^{G}$ with the only constraint that two partitions $p$ and $q$ can only be composed (in this order) if the lower colouring of $q$ is the same as the upper colouring of $p$. We then get a \emph{category of coloured partitions} in the sense of \cite[Def 3.1.2]{freslon2014partition}, but we will not need this general framework for the moment. The assignment $p\mapsto T_{p}$ can be extended to this setting in the following way: fix a copy $V^{g}$ of $\C^{N}$ for each $g\in G$. Then, if $p\in \CC^{G}$ has upper colouring $g_{1}, \dots, g_{k}$ and lower colouring $h_{1}, \dots h_{l}$, the same formula as for the uncoloured case yields a map
\begin{equation*}
T_{p} : V^{g_{1}}\otimes \dots\otimes V^{g_{k}} \rightarrow V^{h_{1}}\otimes \dots\otimes V^{h_{l}}.
\end{equation*}
We are now going to use the action of $G$ on itself to produce new objects. More precisely, let $p\in \CC^{G}$ and order its blocks $b_{1}, \dots, b_{b(p)}$ according to their leftmost point. Then, a tuple $(g_{1}, \dots, g_{b(p)})$ of elements of $G$ acts on $p$ by multiplying the colours of all the points of $b_{i}$ by $g_{i}$ on the left, for all $1\leqslant i\leqslant b(p)$. We denote this new partition by $(g_{1}, \dots, g_{b(p)}).p$. If there is an element $g\in G$ such that $g_{i} = g$ for all $1\leqslant i\leqslant b(p)$, then we are simply multiplying all the colours of $p$ by $g$ on the left, and the result is denoted by $g.p$.

\begin{de}\label{de:averagedoperators}
Let $N\geqslant 1$ be an integer. For $p\in \CC^{G}$, we define the \emph{averaged operators} $L_{p}$ and $M_{p}$ by
\begin{eqnarray*}
L_{p} & = & \sum_{g\in G}T_{g.p} \\
M_{p} & = & \sum_{(g_{1}, \dots, g_{b(p)})\in G^{b(p)}}T_{(g_{1}, \dots, g_{b(p)}).p}.
\end{eqnarray*}
\end{de}

One may expect the sum to be divided by the number of elements in the definition above. However, this would make the formula for the composition much more complicated, so we will rather use the above expression. As for the operators $T_{p}$, our aim is to produce a C*-tensor category with the averaged operators, so that there is an associated compact quantum group. We would like to build categories $\mathfrak{C}_{\times}(\CC, G, N)$ and $\mathfrak{C}_{\wr}(\CC, G, N)$ whose objects are non-negative integers and whose morphisms are
\begin{eqnarray*}
\Mor_{\mathfrak{C}_{\times}(\CC, G, N)}(k, l) & = & \Span\{L_{p}, p\in \CC^{G}(k, l)\} \\
\Mor_{\mathfrak{C}_{\wr}(\CC, G, N)}(k, l) & = & \Span\{M_{p}, p\in \CC^{G}(k, l)\} \\
\end{eqnarray*}
The first step in analysing these categories is to understand the composition of morphisms. For operators of the form $L_{p}$, this was done in \cite[Lem 4.2.9]{freslon2014partition}. For operators of the form $M_{p}$, the computation is more involved. Let us introduce some notations. If $p\in \CC(k, l)$ and if $\bo{g} = (g_{1}, \dots, g_{k})$ and $\bo{h} = (h_{1}, \dots, h_{l})$ are tuples of elements of $G$, we denote by $p(\bo{g}, \bo{h})$ the partition obtained by colouring the lower points of $p$ by $\bo{g}$ (from left to right) and the upper points of $p$ by $\bo{h}$ (also from left to right). We also write $\bo{u}\leqslant p$ to indicate that $\bo{u} = (u_{1}, \dots, u_{b(p)})$ is a tuple indexed by the blocks of $p$.

\begin{lem}\label{lem:compositionaveraged}
Let $p\in P(l, m)$, $q\in P(k, l)$ and let $\bo{g}, \bo{h}, \bo{s}, \bo{t}$ be suitable tuples of elements of $G$. If there exist tuples $\bo{x} = (x_{1}, \dots, x_{b(p)})$ and $\bo{y} = (y_{1}, \dots, y_{b(q)})$ of elements of $G$ such that $\bo{c}_{u}(\bo{x}.p(\bo{g}, \bo{h})) = \bo{c}_{d}(\bo{y}.q(\bo{s}, \bo{t}))$ then
\begin{equation*}
M_{p(\bo{g}, \bo{h})}M_{q(\bo{s}, \bo{t})} = (\vert G\vert N)^{\rl(p, q)}M_{pq(\bo{c}_{d}(\bo{x}.p(\bo{g}, \bo{h})), \bo{c}_{u}(\bo{y}.q(\bo{s}, \bo{t})))}.
\end{equation*}
Otherwise, $M_{p(\bo{g}, \bo{h})}M_{q(\bo{s}, \bo{t})} = 0$.
\end{lem}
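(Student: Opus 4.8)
The plan is to expand the product according to Definition~\ref{de:averagedoperators} and then to reorganise the resulting sum by a purely combinatorial analysis of the colourings. First I would write
\begin{equation*}
M_{p(\bo g,\bo h)}M_{q(\bo s,\bo t)} = \sum_{\bo u\leqslant p}\ \sum_{\bo v\leqslant q} T_{\bo u.p(\bo g,\bo h)}\, T_{\bo v.q(\bo s,\bo t)},
\end{equation*}
regarding each coloured map as an operator on the tensor powers of $\C^N\otimes\C[G]$, extended by zero off the relevant coloured summands. The product $T_{\bo u.p(\bo g,\bo h)}T_{\bo v.q(\bo s,\bo t)}$ is then controlled by the coloured refinement of Proposition~\ref{prop:compositionTp}(3): it vanishes unless the upper colouring of $\bo u.p(\bo g,\bo h)$ equals the lower colouring of $\bo v.q(\bo s,\bo t)$, and otherwise equals $N^{\rl(p,q)}T_{(pq)(\bo c_d(\bo u.p(\bo g,\bo h)),\,\bo c_u(\bo v.q(\bo s,\bo t)))}$. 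Here the loop count is the uncoloured one, since on each matching middle colour the relevant trace is taken over an $N$-dimensional space, exactly as in the uncoloured case.

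Next I would make the compatibility condition explicit. Writing $B_p(j)$ and $C_q(j)$ for the blocks of $p$ and $q$ containing the $j$-th middle point, the equality of colourings reads $u_{B_p(j)}h_j = v_{C_q(j)}s_j$ for $1\leqslant j\leqslant l$. If no pair $(\bo u,\bo v)$ satisfies this, then in particular no $(\bo x,\bo y)$ does, so the hypothesis fails and every term vanishes, giving $M_{p(\bo g,\bo h)}M_{q(\bo s,\bo t)}=0$. So I would then assume that the reference solution $(\bo x,\bo y)$ exists. Comparing an arbitrary solution $(\bo u,\bo v)$ with it, the equations yield $u_{B_p(j)}x_{B_p(j)}^{-1} = v_{C_q(j)}y_{C_q(j)}^{-1}$ for all $j$.

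To exploit this I would introduce the graph $\Gamma$ whose vertices are the blocks of $p$ together with the blocks of $q$, with one edge per middle point $j$ joining $B_p(j)$ to $C_q(j)$. The relations above say that the right quotients $u_B x_B^{-1}$ and $v_C y_C^{-1}$ are constant along edges, hence constant on each connected component of $\Gamma$; conversely, a choice of one element $w_\kappa\in G$ per component $\kappa$ produces a solution, with $u_B = w_\kappa x_B$ for $B$ in $\kappa$ (and symmetrically for the $v_C$). Thus acting by $\bo u$ (resp.\ $\bo v$) multiplies every bottom colour of $p$ (resp.\ top colour of $q$) belonging to $\kappa$ by $w_\kappa$, relative to the reference colouring.

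Finally I would identify the components of $\Gamma$: merging the blocks of $p$ and $q$ that share a middle point, the components meeting an upper point of $q$ or a lower point of $p$ are exactly the blocks of $pq$, while the components consisting only of middle points are exactly the loops, so $\Gamma$ has $b(pq)+\rl(p,q)$ components. For a loop component the parameter $w_\kappa$ changes no top or bottom colour, contributing a multiplicity $|G|$ without altering the coloured partition; for a block-of-$pq$ component, $w_\kappa$ is precisely the free parameter of the $G^{b(pq)}$-action on that block. Hence, as $(\bo u,\bo v)$ ranges over all solutions, $(pq)(\bo c_d(\bo u.p),\bo c_u(\bo v.q))$ runs exactly over the orbit $\{\bo w.(pq)(\bo a,\bo b):\bo w\leqslant pq\}$, with $\bo a = \bo c_d(\bo x.p(\bo g,\bo h))$ and $\bo b = \bo c_u(\bo y.q(\bo s,\bo t))$, each coloured partition being attained $|G|^{\rl(p,q)}$ times. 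This gives $\sum T = |G|^{\rl(p,q)}M_{(pq)(\bo a,\bo b)}$, and multiplying by $N^{\rl(p,q)}$ yields the claimed $(|G|N)^{\rl(p,q)}M_{(pq)(\bo a,\bo b)}$. I expect the main obstacle to be precisely this last bookkeeping step: showing that the image is a single full $G^{b(pq)}$-orbit and that all fibres have the uniform size $|G|^{\rl(p,q)}$, i.e.\ correctly matching the surplus group freedom to the loops.
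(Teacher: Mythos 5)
Your proof is correct and follows essentially the same route as the paper's: expand both averaged operators, reduce to the colour-matching condition on the middle row, show every solution differs from the reference one by a constant on each "connected component" of merged blocks, and then count that the loop components contribute a surplus factor of $\vert G\vert^{\rl(p,q)}$ while the remaining components parametrize exactly the $G^{b(pq)}$-orbit defining $M_{pq}$. Your graph $\Gamma$ merely makes explicit what the paper phrases as "blocks which get connected in the composition", and your fibre count $\vert G\vert^{\rl(p,q)}$ matches the paper's (which contains a typo $\vert G\vert^{\rl(pq)}$ at that point).
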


\begin{proof}
Using the definition of the averaged operators, we have
\begin{equation*}
M_{p(\bo{g}, \bo{h})}M_{q(\bo{s}, \bo{t})} = N^{\rl(p, q)}\sum_{\bo{u}\leqslant p, \bo{v}\leqslant q}\delta(\bo{c}_{u}(\bo{u}.p(\bo{g}, \bo{h})), \bo{c}_{d}(\bo{v}.q(\bo{s}, \bo{t})))T_{pq(\bo{c}_{d}(\bo{u}.p(\bo{g}, \bo{h})), \bo{c}_{u}(\bo{v}.q(\bo{s}, \bo{t})))}.
\end{equation*}
The condition on $\bo{x}$ and $\bo{y}$ in the statement means precisely that
\begin{equation*}
\delta(\bo{c}_{u}(\bo{x}.p(\bo{g}, \bo{h})), \bo{c}_{d}(\bo{y}.q(\bo{s}, \bo{t}))) \neq 0.
\end{equation*}
If no such tuples $\bo{x}$ and $\bo{y}$ exist, then the $\delta$-symbol always vanishes and the result is $0$. Otherwise, let $\bo{x}'$ and $\bo{y}'$ be other tuples for which the $\delta$-symbol does not vanish and let us consider an integer $1\leqslant i\leqslant l$. The point $i$ in the upper row of $p$ belongs to a certain block $b$, while the point $i$ in the lower row of $q$ belongs to another block $d$. Since we know that the $\delta$-symbol is non-zero, the colours of $i$ are the same in both partitions. In other words, $x_{b}h_{i} = y_{d}s_{i}$ and $x'_{b}h_{i} = y'_{d}s_{i}$, implying that $x'_{b}x_{b}^{-1} = y'_{d}y_{d}^{-1}$. This equality holds for any blocks $b$ and $d$ having a common point, hence also for any blocks $b$ and $d$ satisfying the following conditions:
\begin{itemize}
\item $b$ contains an upper point of $p$
\item $d$ contains a lower point of $q$
\item $b$ and $d$ get connected when composing $p$ and $q$.
\end{itemize}
Therefore, we can find two tuples $\bo{w} = (w_{1}, \dots, w_{b(p)})$ and $\bo{z} = (z_{1}, \dots, z_{b(q)})$ which coincide on  blocks of $p$ and $q$ which get connected in the composition and such that $\bo{x}' = \bo{w}\bo{x}$ and $\bo{y}' = \bo{z}\bo{y}$. Reciprocally, any such tuples $\bo{w}$ and $\bo{z}$ yield tuples $\bo{x}'$ and $\bo{y}'$ such that the $\delta$-symbol is non-zero. Summarizing, we can write
\begin{equation*}
M_{p(\bo{g}, \bo{h})}M_{q(\bo{s}, \bo{t})} = N^{\rl(p, q)}\sum_{\bo{w}, \bo{z}}T_{pq(\bo{c}_{d}(\bo{w}\bo{x}.p(\bo{g}, \bo{h})), \bo{c}_{u}(\bo{z}\bo{y}.q(\bo{s}, \bo{t})))}.
\end{equation*}
The data of the tuples $\bo{w}$ and $\bo{z}$ is almost the same as a tuple of colours corresponding to the blocks of $pq$. More precisely, let us associate to each pair $(\bo{w}, \bo{z})$ the unique tuple $\bo{a} = (a_{1}, \dots, a_{b(pq)})$ which matches $\bo{w}$ and $\bo{z}$ in a natural way. Then, two pairs give the same $\bo{a}$ if and only if they only differ on upper non-through-blocks of $p$ and lower non-through-block of $q$. But since we know that $\bo{w}$ and $\bo{z}$ are constant on all blocks which are connected, they only differ by an element of $G$ on each loop of the composition. Thus, each tuple $\bo{a}$ will appear $\vert G\vert^{\rl(pq)}$ times in the sum. We are then left with a sum over colourings of the blocks of $pq$, hence the result.
\end{proof}

\begin{prop}
The categories $\mathfrak{C}_{\times}(\CC, G, N)$ and $\mathfrak{C}_{\wr}(\CC, G, N)$ are concrete C*-tensor categories with duals.
\end{prop}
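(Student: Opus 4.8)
The plan is to verify directly the axioms of a concrete C*-tensor category with duals for the collections of operators $\{L_{p}\}$ and $\{M_{p}\}$, regarded as honest linear maps between the tensor powers of the Hilbert space $\hat V = \bigoplus_{g\in G}V^{g}$. Since each morphism space is by definition a linear span of such concrete operators, and since for a concrete category of Hilbert space maps the C*-structure (with $*$ the operator adjoint) is automatic once the relevant spaces are stable under the operations, the whole statement reduces to checking that $\Mor(k,l)$ is stable under composition, tensor product and adjoint, that it contains the identity, and that duals exist. I would treat $\mathfrak{C}_{\wr}(\CC,G,N)$ in detail and indicate the (easier) modifications for $\mathfrak{C}_{\times}(\CC,G,N)$.

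\emph{Composition and adjoints.} Stability under composition for the operators $M_{p}$ is exactly the content of Lemma~\ref{lem:compositionaveraged}: the product $M_{p(\bo g,\bo h)}M_{q(\bo s,\bo t)}$ is either $0$ or a scalar multiple of an averaged operator $M_{r}$, with $r$ a coloured version of $pq$ lying in $\CC^{G}$ because $\CC$ is a category. For $\mathfrak{C}_{\times}(\CC,G,N)$ the analogous statement is \cite[Lem.~4.2.9]{freslon2014partition}. For adjoints, Proposition~\ref{prop:compositionTp} gives $T_{p}^{*}=T_{p^{*}}$, and flipping a coloured partition upside down commutes with the block-shift action, so $(g.p)^{*}=g.(p^{*})$ and the coloured analogue holds blockwise; summing yields $L_{p}^{*}=L_{p^{*}}$ and $M_{p}^{*}=M_{p^{*}}$, which stay in the span since $\CC$ is stable under involution.

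\emph{Tensor products and identity.} Here I would use $T_{p}\otimes T_{q}=T_{p\otimes q}$ from Proposition~\ref{prop:compositionTp}. Because horizontal concatenation merges no blocks, a block-indexed tuple of $p\otimes q$ is exactly a pair of block-indexed tuples of $p$ and of $q$, so summing over all shifts gives
\begin{equation*}
M_{p}\otimes M_{q}=\sum_{\bo u\leqslant p,\ \bo v\leqslant q}T_{(\bo u.p)\otimes(\bo v.q)}=M_{p\otimes q}.
\end{equation*}
For the operators $L_{p}$ the uniform shift does not factor, but reindexing the double sum by $g'=gh$ and using $(g.p)\otimes((gh).q)=g.(p\otimes(h.q))$ gives
\begin{equation*}
L_{p}\otimes L_{q}=\sum_{g,h\in G}T_{g.(p\otimes(h.q))}=\sum_{h\in G}L_{p\otimes(h.q)},
\end{equation*}
again a sum of averaged operators attached to partitions of $\CC^{G}$. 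The identity morphism on the object $k$ comes from $\vert^{\otimes k}\in\CC$: summing the shifts of the one-block identity partition over all colours recovers the identity of $\hat V^{\otimes k}$, so $M_{\vert^{\otimes k}}=\operatorname{id}=L_{\vert^{\otimes k}}$.

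\emph{Duals.} Finally, duals should come from the rotation-stability of $\CC$: rotating $\vert\in\CC(1,1)$ produces the ``cap'' and ``cup'' partitions in $\CC(0,2)$ and $\CC(2,0)$, whose averaged operators furnish candidate solutions $R,\overline{R}$ of the conjugate equations, the zig-zag identities being checked by a direct application of the composition formula of Lemma~\ref{lem:compositionaveraged}. \textbf{The main obstacle} is precisely this last verification: unlike the uncoloured case one must track simultaneously the loop count $\rl$, the number of free group colours on each loop, and the compatibility of colourings across the composition, and confirm that these conspire to produce exactly the scalar $(\vert G\vert N)^{\rl}$ needed for the conjugate equations to hold. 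Once this is carried out, every object is its own conjugate and the categories are rigid, which completes the proof that both are concrete C*-tensor categories with duals.
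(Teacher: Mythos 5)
Your proof is correct and follows essentially the same route as the paper, which likewise reduces everything to the cited result of \cite{freslon2014partition} for $\mathfrak{C}_{\times}$ and to Lemma \ref{lem:compositionaveraged} for the composition in $\mathfrak{C}_{\wr}$, treating tensor products, involution and rotation (i.e.\ duals) as routine; you have simply written out the details the paper leaves implicit, and your computations for $M_{p}\otimes M_{q}$, $L_{p}\otimes L_{q}$ and the adjoints are all sound. One small slip: $L_{\vert^{\otimes k}}$ is \emph{not} the identity of $\hat V^{\otimes k}$ for $k\geqslant 2$ (the global shift only produces the diagonal colourings, so $L_{\tilde\vert^{\otimes k}}=\sum_{g\in G}T_{g.\tilde\vert^{\otimes k}}$ is the projection onto $\bigoplus_{g}(V^{g})^{\otimes k}$); the identity is nevertheless in $\Span\{L_{p}\}$, either as $\sum L_{\vert_{\bo g}^{\bo g}}$ over orbit representatives $\bo g\in G^{k}/G$ or, more simply, as $L_{\vert}^{\otimes k}$ via the tensor-product stability you already established, so the argument is unaffected. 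Finally, the conjugate-equation check you flag as the main obstacle is in fact immediate from Lemma \ref{lem:compositionaveraged}: with the trivial colour conjugation $\overline{g}=g$ the averaged cup sends $1$ to $\sum_{g,i}e_{i}^{g}\otimes e_{i}^{g}$, the zig-zag composites have $\rl=0$, and the single closed loop in $\sqcap\sqcup$ accounts exactly for the scalar $\vert G\vert N=\dim\hat V$.
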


\begin{proof}
This was proved in \cite[Lem 4.2.9]{freslon2014partition} for $\mathfrak{C}_{\times}(P, G, N)$. For $\mathfrak{C}_{\wr}(P, G, N)$ the same strategy works for the horizontal concatenation, the involution and the rotation. The only difficulty is the stability under composition, which was established in Lemma \ref{lem:compositionaveraged}.
\end{proof}

By S.L. Woronowicz's Tannaka-Krein theorem, each of these concrete C*-tensor categories is the category of finite-dimensional representations of a compact quantum group. To understand the resulting compact quantum groups, we will compare them to those arising in the wreath product construction. So let us fix an integer $N\in \N$ and consider the vector space $V$ spanned by elements $(e_{i}^{g})_{1\leqslant i\leqslant N, g\in G}$. There is a natural representation $\rho$ of $G$ on $V$ given by
\begin{equation*}
\rho(g)e_{j}^{h} = e_{j}^{gh}.
\end{equation*}
Seeing $V$ as $\C^{N}\otimes \C^{\vert G\vert}$, we note that $\rho = \ii\otimes \lambda_{G}$, where $\lambda_{G}$ denotes the left regular representation of $G$. We will also consider the representations $(\rho_{i})_{1\leqslant i\leqslant N}$ defined in Subsection \ref{subsec:wreath}. These representations can be used to characterize the averaged operators.

\begin{prop}\label{prop:equivariance}
Let $k, l$ be two integers and consider an operator $X \in\Span\{T_{p}, p\in P^{G}(k, l)\}$. Then,
\begin{itemize}
\item $\rho(g^{-1})^{\otimes l}\circ X\circ \rho(g)^{\otimes k} = X$ for all $g\in G$ if and only if $X\in \Span\{L_{p}, p\in P^{G}(k, l)\}$.
\item $\rho_{i}(g^{-1})^{\otimes l}\circ X\circ \rho_{i}(g)^{\otimes k} = X$ for all $1\leqslant i \leqslant N$ and $g\in G$ if and only if $X\in \Span\{M_{p}, p\in P^{G}(k, l)\}$.
\end{itemize}
\end{prop}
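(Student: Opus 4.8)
The plan is to recognise each of the two conditions as invariance under a finite group acting by conjugation, and to analyse the corresponding averaging projection. Throughout write $W=\Span\{T_p,\ p\in P^G(k,l)\}$ and recall that, for a coloured partition $p$ with upper colouring $\bo a_0$ and lower colouring $\bo b_0$, the operator $T_p$ is the uncoloured map $T_{\bar p}$ on the $\C^N$-legs, supported on the single colour sector $(\bo a_0,\bo b_0)$; concretely $\langle e_{\bo j}^{\bo b}\vert T_p\vert e_{\bo i}^{\bo a}\rangle=\delta_{\bo a,\bo a_0}\delta_{\bo b,\bo b_0}\delta_{\bar p}(\bo i,\bo j)$. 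The two computations I would carry out first, straight from the definitions, are
\begin{equation*}
\rho(g^{-1})^{\otimes l}\,T_p\,\rho(g)^{\otimes k}=T_{g^{-1}.p}
\end{equation*}
and, writing $\rho_i(g)e_j^h=e_j^{g^{\delta_{i,j}}h}$ (convention $g^1=g$, $g^0=e$),
\begin{equation*}
\langle e_{\bo j}^{\bo b}\vert\rho_i(g^{-1})^{\otimes l} X\rho_i(g)^{\otimes k}\vert e_{\bo i}^{\bo a}\rangle=\langle e_{\bo j}^{\bo b'}\vert X\vert e_{\bo i}^{\bo a'}\rangle,\qquad a'_m=g^{\delta_{i_m,i}}a_m,\quad b'_n=g^{\delta_{j_n,i}}b_n .
\end{equation*}

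For the first bullet everything is clean. The formula shows that conjugation by $\rho(g)$ permutes the spanning operators of $W$, so it defines a linear $G$-action on $W$, and the averaging map $P=\tfrac1{\vert G\vert}\sum_g\rho(g^{-1})^{\otimes l}(\cdot)\rho(g)^{\otimes k}$ is the projection onto its fixed points. Since $P(T_p)=\tfrac1{\vert G\vert}\sum_g T_{g.p}=\tfrac1{\vert G\vert}L_p\in W$, we get $W^{G}=P(W)=\Span\{L_p\}$, which is exactly the first equivalence.

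The second bullet is the substantial one. As the $\rho_i$ commute, $\bo g\mapsto\beta_{\bo g}:=R(\bo g^{-1})^{\otimes l}(\cdot)R(\bo g)^{\otimes k}$, with $R(\bo g)e_j^h=e_j^{g_j h}$, is an action of $G^N$, and the displayed condition is precisely $\beta_{\bo g}$-invariance for all $\bo g$. One inclusion is a direct check: the second formula shows that on a basis vector whose $\C^N$-indices are constant on each block of $\bar p$, the map $\beta_{\bo g}$ shifts the colours of a whole block by the element attached to that block's index, and re-indexing the block-wise shifts in $M_p=\sum_{\bo u}T_{\bo u.p}$ gives $\beta_{\bo g}(M_p)=M_p$; hence $\Span\{M_p\}$ consists of invariants. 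For the converse I would use the averaging projection $Q=\tfrac1{\vert G\vert^N}\sum_{\bo g}\beta_{\bo g}$ onto the invariants: any invariant $X$ equals $Q(X)=\sum_q\lambda_q Q(T_q)$, so it is enough to show $Q(T_q)\in\Span\{M_p\}$.

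This last point is where the real difficulty lies, and is the main obstacle. In contrast with the first bullet, conjugation by $\rho_i$ is \emph{index-dependent}: the amount by which a block is shifted depends on the actual $\C^N$-value it carries, so a single $\beta_{\bo g}(T_q)$ correlates colour and index and is not of the form $T_{q'}$. Averaging over $\bo g$ removes this correlation and returns $Q(T_q)$ to $W$, but the result must be organised according to the partition $\ker(\bo i,\bo j)$: when the indices of two blocks of $\bar q$ coincide they are necessarily shifted together, and this produces a term supported on the coarser partition obtained by merging those two blocks. Computing $Q(T_q)$ this way expresses it as a combination of operators $M_{q'}$ with $\overline{q'}\geqslant\bar q$, the leading term being a scalar multiple of $M_q$; isolating the individual contributions is a Möbius inversion over the partition lattice (equivalently an induction from the coarsest partition downwards). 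The one subtlety to watch is that the correction terms coming from coincidences are invariant only under the diagonal shift, not the full block-wise shift, so one must use \emph{all} colourings of the coarser partitions $q'$ to absorb them; with that supply available the inversion goes through. This yields $Q(T_q)\in\Span\{M_p\}$, and therefore the invariants equal $Q(W)=\Span\{M_p\}$, as required.
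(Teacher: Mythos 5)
Your proof is correct, but it takes a genuinely different route from the paper, which does not argue either bullet itself: the first is delegated to \cite[Lem 4.2.8]{freslon2014partition} and the second to \cite[Lem 6.1]{bloss2003g}. For the first bullet your averaging argument is exactly the standard one. For the second, the classical argument behind the citation (a draft of which survives, commented out, in the source of this paper) is dual to yours: one writes $X=\sum_{p}\lambda_{p}T_{p}$, reduces to partitions with at most $N$ blocks, evaluates the invariance condition on index tuples that are constant on the blocks of a chosen $p_{0}$ and take \emph{distinct} values on distinct blocks, and reads off that $\lambda_{p_{0}}$ is unchanged when a single block of $p_{0}$ is recoloured; iterating gives constancy of the coefficients on block-colouring orbits. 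Your approach instead computes the image of the averaging projection $Q$ over $G^{N}$; it avoids any linear-independence or ``at most $N$ blocks'' reduction, at the price that $Q(T_{q})$ is not a multiple of $M_{q}$ and the degenerate index sectors (where distinct blocks of $\overline{q}$ carry the same index) must be accounted for. You have correctly isolated the two essential points there: those sectors contribute operators attached to coarser partitions, and these are only absorbed by using \emph{all} colourings of the coarser partitions --- which is available for $P^{G}$ but not for a general category $\CC$, exactly the obstruction recorded in Remark \ref{rem:conjugationinvariance}.

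The one step you assert rather than execute is the inversion itself, so let me confirm it closes. For a partition $\pi\geqslant\overline{q}$ of the points, let $E_{\geqslant\pi}$ denote restriction to index sectors that are constant on the blocks of $\pi$. Then $E_{\geqslant\pi}(T_{\bo{w}.q})=T_{(\bo{w}.q)\vee\pi}$, where $(\bo{w}.q)\vee\pi$ keeps the point colours and coarsens the underlying partition to $\pi$; summing over $\bo{w}$, the multiset $\{(\bo{w}.q)\vee\pi\}$ is a union of orbits of the block-wise $G$-action on colourings of $\pi$ with constant multiplicity on each orbit, so $E_{\geqslant\pi}(M_{q})$ is a non-negative integer combination of operators $M_{q'}$ with $\overline{q'}=\pi$. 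On the sector where the induced identification of blocks of $\overline{q}$ is exactly $\tau$, one checks $Q(T_{q})$ agrees with $\vert G\vert^{-c(\tau)}M_{q\vee\tau}$, and M\"{o}bius inversion over the lattice of partitions of the blocks of $\overline{q}$ then writes $Q(T_{q})$ as a linear combination of the operators $E_{\geqslant\pi}(M_{q\vee\tau})$, each of which lies in $\Span\{M_{p}\}$ by the previous sentence. So the sketch is completable as stated; it should simply be written out in a final version.
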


\begin{proof}
The first assertion is a direct consequence of \cite[Lem 4.2.8]{freslon2014partition} and the second one was proved in \cite[Lem 6.1]{bloss2003g}.

\end{proof}

\begin{rem}\label{rem:conjugationinvariance}
It was proved in \cite[Prop 4.3.1]{freslon2014partition} that for any category of partitions $\CC$, the set of linear combinations of operators $T_{p}$ for $p\in \CC$ which are invariant under conjugation by $\rho$ is precisely the span of the operators $L_{p}$ for $p\in \CC$. When considering conjugation by $\rho_{i}$ instead, we can only deduce from Proposition \ref{prop:equivariance} that the set of invariant elements is $\Span\{M_{p}, p\in P^{G}(k, l)\}\cap\Span\{T_{p}, p\in \CC^{G}(k, l)\}$. Whether this space is the same as $\Span\{M_{p}, p\in \CC^{G}(k, l)\}$ remains an open problem.
\end{rem}

Using Proposition \ref{prop:equivariance}, we can identify some of the compact quantum groups associated to the C*-tensor categories defined above.

\begin{cor}\label{cor:classicalpermutation}
For any category of partitions $\CC$, the compact quantum group associated to $\mathfrak{C}_{\times}(\CC, G, N)$ is the direct product $G\times \G_{N}(\CC)$. Moreover, the compact group associated to $\mathfrak{C}_{\wr}(P, G, N)$ is the wreath product $G\wr S_{N}$.
\end{cor}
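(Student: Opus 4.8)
The plan is to invoke the uniqueness part of Woronowicz's Tannaka--Krein theorem: in each case I will exhibit a compact (quantum) group together with a generating representation on $V=\C^{N}\otimes\C^{\vert G\vert}$ whose intertwiner spaces coincide with the prescribed spans $\Span\{L_{p}\}$, respectively $\Span\{M_{p}\}$. Everything rests on one bookkeeping identity. After fixing the reordering isomorphism $V^{\otimes k}\simeq(\C^{N})^{\otimes k}\otimes(\C^{\vert G\vert})^{\otimes k}$, the definition of $T_{p}$ shows that a coloured partition $p\in\CC^{G}(k,l)$ with underlying uncoloured partition $p_{0}\in\CC(k,l)$ factors as $T_{p}=T_{p_{0}}\otimes E_{p}$, where $E_{p}\in\LL((\C^{\vert G\vert})^{\otimes k},(\C^{\vert G\vert})^{\otimes l})$ is the rank-one operator determined by the upper and lower colourings of $p$. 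Letting $p_{0}$ range over $\CC(k,l)$ and the colourings range freely and independently, these $E_{p}$ exhaust a basis of $\LL((\C^{\vert G\vert})^{\otimes k},(\C^{\vert G\vert})^{\otimes l})$, whence
\begin{equation*}
\Span\{T_{p},\ p\in\CC^{G}(k,l)\}=\Mor_{\G_{N}(\CC)}(u^{\otimes k},u^{\otimes l})\otimes\LL\big((\C^{\vert G\vert})^{\otimes k},(\C^{\vert G\vert})^{\otimes l}\big),
\end{equation*}
using that $\Span\{T_{p_{0}},p_{0}\in\CC(k,l)\}=\Mor_{\G_{N}(\CC)}(u^{\otimes k},u^{\otimes l})$.

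For the first statement I take the fundamental representation $w=u\boxtimes\lambda_{G}$ of $G\times\G_{N}(\CC)$ on $V$, with $\lambda_{G}$ the regular representation. The intertwiner formula for external tensor products over a direct product gives $\Mor_{G\times\G_{N}(\CC)}(w^{\otimes k},w^{\otimes l})=\Mor_{\G_{N}(\CC)}(u^{\otimes k},u^{\otimes l})\otimes\Mor_{G}(\lambda_{G}^{\otimes k},\lambda_{G}^{\otimes l})$. On the other hand, since $\rho=\ii\otimes\lambda_{G}$ acts only on the second tensor leg, intersecting the factorisation above with the $\rho$-invariant operators replaces the factor $\LL((\C^{\vert G\vert})^{\otimes k},(\C^{\vert G\vert})^{\otimes l})$ by exactly $\Mor_{G}(\lambda_{G}^{\otimes k},\lambda_{G}^{\otimes l})$; thus the $\rho$-invariants of $\Span\{T_{p},p\in\CC^{G}\}$ coincide with $\Mor_{G\times\G_{N}(\CC)}(w^{\otimes k},w^{\otimes l})$. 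By Remark \ref{rem:conjugationinvariance} those same $\rho$-invariants are precisely $\Span\{L_{p},p\in\CC^{G}\}$, so the two spaces agree. Since $u$ generates $\mathrm{Rep}(\G_{N}(\CC))$ while $\lambda_{G}^{\otimes k}$ already contains every irreducible of $G$ for each $k$, the representation $w$ generates $\mathrm{Rep}(G\times\G_{N}(\CC))$, and uniqueness yields the isomorphism.

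For the second statement I set $\CC=P$ and use the faithful permutation representation $\theta$ of $G\wr S_{N}$ on $V$ built from $\pi$ and the $\rho_{i}$ in Subsection \ref{subsec:wreath}. As $G\wr S_{N}$ is generated by $\pi(S_{N})$ and the $\rho_{i}(G)$, an operator lies in $\Mor_{G\wr S_{N}}(\theta^{\otimes k},\theta^{\otimes l})$ if and only if it is invariant under all the $\rho_{i}$ and under $\pi$. Now $\pi(\sigma)$ permutes the $\C^{N}$-leg and acts trivially on the colour leg; feeding this into the factorisation and using $\Span\{T_{p_{0}},p_{0}\in P\}=\Mor_{S_{N}}((\C^{N})^{\otimes k},(\C^{N})^{\otimes l})$ shows that the $\pi$-invariant operators are exactly $\Span\{T_{p},p\in P^{G}(k,l)\}$. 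Intersecting further with the $\rho_{i}$-invariance and applying Proposition \ref{prop:equivariance} identifies this intersection with $\Span\{M_{p},p\in P^{G}(k,l)\}$. As $\theta$ is a faithful representation of the finite group $G\wr S_{N}$, its tensor powers contain every irreducible, so $\theta$ generates; uniqueness then gives that $\mathfrak{C}_{\wr}(P,G,N)$ is the genuine (commutative) compact group $G\wr S_{N}$.

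The main obstacle is the factorisation identity together with the care needed in tracking the reordering isomorphism: one must verify that varying $p_{0}$ and the colourings independently really produces the full tensor product of spans rather than a proper subspace, and that the $\rho$-, $\rho_{i}$- and $\pi$-actions each respect the decomposition $V^{\otimes k}\simeq(\C^{N})^{\otimes k}\otimes(\C^{\vert G\vert})^{\otimes k}$. Beyond that, the genuinely hard input---the characterisation of $\rho_{i}$-invariant operators as $\Span\{M_{p}\}$---is already supplied by Proposition \ref{prop:equivariance} (the Schur--Weyl duality of Bloss), while Remark \ref{rem:conjugationinvariance} gives the corresponding statement for $\rho$ and $\Span\{L_{p}\}$; the argument thus reduces to assembling these two inputs with the direct-product intertwiner formula and the standard fact that a faithful representation of a finite group generates its whole representation category.
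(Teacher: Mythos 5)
Your argument is correct, but it is worth comparing it with what the paper actually does: the paper's proof of this corollary is essentially a pair of citations --- the first assertion is attributed to \cite[Prop 4.3.1]{freslon2014partition}, and the second to \cite[Thm 4.1.4]{parvathi2004g} together with \cite{bloss2003g}, i.e.\ to the classical Schur--Weyl duality for $G\wr S_{N}$. What you have done is reassemble that duality from the ingredients already recorded in the paper, namely the two invariance characterisations (Proposition \ref{prop:equivariance} for the $\rho_{i}$'s and Remark \ref{rem:conjugationinvariance} for $\rho$), and this makes the logical structure much more transparent. The two genuinely new ingredients you supply are (i) the factorisation $T_{p}=T_{p_{0}}\otimes E_{p}$ with $E_{p}$ the matrix unit $e_{\bo{c}_{d}(p)}\langle e_{\bo{c}_{u}(p)},\cdot\rangle$ on the colour legs, which correctly yields $\Span\{T_{p},p\in\CC^{G}(k,l)\}=\Mor_{\G_{N}(\CC)}(u^{\otimes k},u^{\otimes l})\otimes\LL((\C^{\vert G\vert})^{\otimes k},(\C^{\vert G\vert})^{\otimes l})$ because the colourings of $\CC^{G}$ range over all tuples independently of the underlying partition; and (ii) the identity $(A\otimes B)\cap(C\otimes D)=(A\cap C)\otimes(B\cap D)$ for subspaces, which you use implicitly when intersecting with the $\rho$- and $\pi$-invariants --- this is a true (if not completely trivial) linear algebra fact and deserves the one-line justification you allude to. I checked the two places where a gap could hide: in the wreath case you apply Proposition \ref{prop:equivariance} only after having shown that the $\pi$-invariants are exactly $\Span\{T_{p},p\in P^{G}(k,l)\}$, so you stay inside the domain on which that proposition characterises $\rho_{i}$-invariance; and you only ever need the $M_{p}$-characterisation for $\CC=P$, thereby steering clear of the open problem flagged in Remark \ref{rem:conjugationinvariance}. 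The generation claims (Burnside for the faithful permutation representation $\theta$, and the fact that $\lambda_{G}$ contains every irreducible of $G$) are standard, so the appeal to Tannaka--Krein uniqueness closes the argument. In short: same underlying mathematics as the cited references, but a self-contained derivation that the paper chose not to write out.
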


\begin{proof}
The first assertion was proved in \cite[Prop 4.3.1]{freslon2014partition}. The second statement is a consequence of \cite[Thm 4.1.4]{parvathi2004g} and the results of \cite{bloss2003g}.
\end{proof}

The last  result justifies the notations $\mathfrak{C}_{\times}$ and $\mathfrak{C}_{\wr}$, as well as the following definition.

\begin{de}
Let $N\geqslant 1$ be an integer and let $\CC$ be a category of partitions. We denote by $G\wr \G_{N}(\CC)$ the compact quantum group associated to $\mathfrak{C}_{\wr}(\CC, G, N)$ and call it the \emph{partition wreath product} of $G$ by $\G_{N}(\CC)$.
\end{de}

\begin{rem}
It is clear that the fundamental representation of $G\wr\G_{N}(\C)$ is invariant under conjugation by the representations $\rho_{i}$ and $\pi$. The question of Remark \ref{rem:conjugationinvariance} is whether partition wreath products are the only quantum groups satisfying this property.
\end{rem}

The partition wreath product has links to the free wreath products of J. Bichon, even though the two constructions are different in essence. This will be easier to see once we have some results on the structure of the C*-algebra $C(G\wr\G_{N}(\CC))$, which is the reason why we postpone this discussion to the end of Subsection \ref{subsec:permutation}.

\subsection{Sudoku matrices}

We now want to give a description of $G\wr\G_{N}(\CC)$ as a compact matrix quantum group. This means that we want to describe a fundamental representation of $G\wr\G_{N}(\CC)$ as a matrix with coefficients in $C(G\wr\G_{N}(\CC))$, and provide a description of these coefficients as generators of a universal $*$-algebra satisfying certain algebraic conditions. The main idea is a generalisation of the sudoku matrices introduced in \cite{banica2009fusion}.

\begin{de}
Let $N\geqslant 1$ be an integer. A \emph{$(G, N)$-sudoku matrix} is a matrix $U \in M_{N\times \vert G\vert}(A)$, for some C*-algebra $A$, of the form $[Q_{h^{-1}g}]_{g, h\in G}$ for matrices $Q_{g}\in M_{N}(A)$.
\end{de}

\begin{rem}
In the case $G = \Z_{s}$, this definition is equivalent to the one of a $(s, N)$-sudoku matrix given in \cite[Def 2.2]{banica2009fusion}.
\end{rem}

Let $\CC$ be a category of partitions and let $N\geqslant 1$ be an integer. Then, $C(\G_{N}(\CC))$ is the universal C*-algebra generated by a $N\times N$ unitary matrix $U$ whose coefficients satisfy relations which are given by the partitions in $\CC$, and therefore do not depend on $N$. Let us call these relations the \emph{$\CC$-relations}.

\begin{de}\label{de:sudoku}
Let $N\geqslant 1$ be an integer. We define $A_{N}(G, \CC)$ to be the universal C*-algebra generated by a unitary $(G, N)$-sudoku matrix $\U$ satisfying the $\CC$-relations. The map
\begin{equation*}
\D : (Q_{h})_{i, j} \mapsto \sum_{k=1}^{N}\sum_{g\in G}(Q_{g})_{ik}\otimes (Q_{hg^{-1}})_{kj}
\end{equation*}
turns $A_{N}(G, \CC)$ into a compact quantum group.
\end{de}

Our aim in this subsection is to prove that this compact quantum group is $G\wr\G_{N}(\CC)$. To do this, we first provide a different presentation of $A_{N}(G, \CC)$.

\begin{lem}\label{lem:presentationsudoku}
Let $B_{N}(G, \CC)$ be the universal C*-algebra generated by a $N\vert G\vert\times N\vert G\vert$ matrix $\V$ satisfying the $\CC$-relations and such that for all $1\leqslant i, j \leqslant N$ and all $g, h, s\in G$,
\begin{equation*}
\V_{i, g}^{j, h} = \V_{i, sg}^{j, sh}.
\end{equation*}
Then, $B_{N}(G, \CC) \simeq A_{N}(G, \CC)$. Moreover, the image of the coproduct on $A_{N}(G, \CC)$ is given by
\begin{equation*}
\D(\V_{i, g}^{j, h}) = \sum_{k=1}^{N}\sum_{s\in G}\V_{i, g}^{k, s}\otimes \V_{k, s}^{j, h}.
\end{equation*}
\end{lem}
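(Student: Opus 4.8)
The plan is to construct an explicit isomorphism by matching the two sets of generators and checking that the defining relations correspond. The key observation is that a $(G,N)$-sudoku matrix of the form $\U = [Q_{h^{-1}g}]_{g,h\in G}$, where the rows and columns are doubly indexed by pairs $(i,g)$ and $(j,h)$ with $1\leqslant i,j\leqslant N$ and $g,h\in G$, has entries $\U_{i,g}^{j,h} = (Q_{h^{-1}g})_{i,j}$. I would therefore \emph{define} the candidate isomorphism on generators by setting
\begin{equation*}
\V_{i,g}^{j,h} := (Q_{h^{-1}g})_{i,j},
\end{equation*}
so that $\V$ is literally the sudoku matrix $\U$ viewed as an $N\vert G\vert \times N\vert G\vert$ matrix indexed by the pairs. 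The point is that this assignment manifestly satisfies the left-translation invariance relation defining $B_{N}(G,\CC)$: one computes $\V_{i,sg}^{j,sh} = (Q_{(sh)^{-1}(sg)})_{i,j} = (Q_{h^{-1}s^{-1}sg})_{i,j} = (Q_{h^{-1}g})_{i,j} = \V_{i,g}^{j,h}$ for every $s\in G$, which is the content of the relation. Conversely, any matrix $\V$ satisfying that invariance relation depends only on the product $h^{-1}g$ (equivalently, setting $s = g^{-1}$ one gets $\V_{i,g}^{j,h} = \V_{i,e}^{j,h g^{-1}}$), so defining $Q_{k}$ by $(Q_{k})_{i,j} := \V_{i,e}^{j,k^{-1}}$ recovers a sudoku matrix and inverts the assignment. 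This shows the two universal presentations generate the same algebra.

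Next I would verify that the correspondence respects \emph{all} of the defining data, not just the invariance relation. The $\CC$-relations are imposed on both sides by fiat and correspond under the identification $\V = \U$, so nothing needs to be checked there beyond noting that the two matrices are the same object presented differently. The genuine verification is that both algebras are universal for the \emph{same} set of relations: an algebra receiving a unitary sudoku matrix satisfying the $\CC$-relations is exactly an algebra receiving a unitary matrix $\V$ satisfying the $\CC$-relations together with the translation invariance, because the sudoku structure $[Q_{h^{-1}g}]$ is equivalent to that invariance by the computation above. By the universal property, the identity-on-generators maps in both directions are $\ast$-homomorphisms that are mutually inverse, giving $B_{N}(G,\CC)\simeq A_{N}(G,\CC)$.

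Finally I would compute the coproduct in the $\V$-picture. Starting from the formula in Definition \ref{de:sudoku}, namely $\D\big((Q_{h})_{i,j}\big) = \sum_{k=1}^{N}\sum_{g\in G}(Q_{g})_{i,k}\otimes (Q_{hg^{-1}})_{k,j}$, I substitute the entry $\V_{i,g}^{j,h} = (Q_{h^{-1}g})_{i,j}$, i.e.\ apply the displayed coproduct to $Q_{h^{-1}g}$. Reindexing the inner sum (replacing the summation variable by $s = $ the appropriate group element so that the first tensor leg reads $(Q_{s^{-1}g})_{i,k} = \V_{i,g}^{k,s}$ and the second reads $(Q_{(h^{-1}g)(s^{-1}g)^{-1}})_{k,j} = (Q_{h^{-1}s})_{k,j} = \V_{k,s}^{j,h}$) yields exactly
\begin{equation*}
\D(\V_{i,g}^{j,h}) = \sum_{k=1}^{N}\sum_{s\in G}\V_{i,g}^{k,s}\otimes \V_{k,s}^{j,h},
\end{equation*}
as claimed; one checks the group arithmetic $h^{-1}g = (s^{-1}g)\cdot\text{(something)}$ carefully to confirm the reindexing is a bijection of $G$. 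The main obstacle is bookkeeping: keeping the doubly-indexed matrix entries straight and performing the change of summation variable so that the group elements line up correctly in both tensor legs. There is no conceptual difficulty, only the risk of an index slip, so I would carry out the reindexing explicitly and confirm that it is the substitution $g\mapsto s$ with $s$ ranging over all of $G$ bijectively that converts the $Q$-coproduct into the $\V$-coproduct.
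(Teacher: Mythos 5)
Your proposal is correct and follows essentially the same route as the paper: both identify $\V$ with the sudoku matrix $\U$ through an explicit relabelling of the doubly-indexed generators, observe that unitarity and the $\CC$-relations are insensitive to this relabelling, and obtain the coproduct formula by translating Definition \ref{de:sudoku} with a change of summation variable. The only blemish is the intermediate display $\V_{i,g}^{j,h}=\V_{i,e}^{j,hg^{-1}}$, which for non-abelian $G$ should read $\V_{i,e}^{j,g^{-1}h}$ (this is what your definition $(Q_{k})_{i,j}=\V_{i,e}^{j,k^{-1}}$ actually requires in order to invert $\V_{i,g}^{j,h}=(Q_{h^{-1}g})_{i,j}$); otherwise the argument, including the reindexing $t=s^{-1}g$ in the coproduct computation, is sound.
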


\begin{proof}
Let us set $B = B_{N}(G, \CC)$. For $g\in G$ define $Q_{g} \in M_{N}(B)$ by
\begin{equation*}
(Q_g)_{i,j} = \V_{i, g}^{j, e}
\end{equation*}
for $i, j = 1, \dots, N$ and let $\U$ be the associated $(G, N)$-sudoku matrix. This means that for any $g, h \in G$ and $i, j = 1, \dots, N$,
\begin{equation*}
\U_{g,i}^{h,j} = (Q_{hg^{-1}})_{i,j} = \V_{i, hg^{-1}}^{j, e} = \V_{i, g^{-1}}^{j, h^{-1}}.
\end{equation*}
Thus, $\U$ arises from $\V$ via a permutation of the rows and columns, corresponding to identifying two different orderings ($(i, g^{-1})\leftrightarrow (g, i)$) of the indexing set of the canonical orthonormal basis of $\C^{N\times \vert G\vert}$. It is clear that unitarity and the $\CC$-relations are preserved by this operation. Moreover, this operation is invertible since if $\U$ is a $(G, N)$-sudoku matrix, then setting for all $g, h \in G$ and $i, j = 1, \dots, N$
\begin{equation*}
\V_{i,g}^{j,h} = \U_{g^{-1}, i}^{h^{-1}, j} = (Q_{hg^{-1}})_{i,j}
\end{equation*}
yields a unitary matrix $\V$ satisfying the conditions of the statement. The formula for the coproduct then follows from a simple translation of the correspondence between the coefficients of $\U$ and $\V$.
\end{proof}

We are now ready for the main result of this subsection.

\begin{thm}\label{thm:sudoku}
Let $N\geqslant 1$ be an integer, let $\CC$ be a category of partitions and let $G$ be a finite group. Then, $C(G\wr \G_{N}(\CC)) \simeq A_{N}(G, \CC)$.
\end{thm}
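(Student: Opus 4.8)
The plan is to identify both compact quantum groups through the intertwiner spaces of a single generating representation and then appeal to the uniqueness part of Woronowicz's Tannaka--Krein theorem. By Lemma \ref{lem:presentationsudoku} it suffices to prove $C(G\wr\G_N(\CC))\simeq B_N(G,\CC)$, and the advantage of the presentation $B_N(G,\CC)$ is that its defining relations read directly as intertwiner conditions on the fundamental representation $\V$ acting on $V=\C^N\otimes\C^{\vert G\vert}$. I would first record two such readings. On the one hand, the $\CC$-relations imposed on the $N\vert G\vert\times N\vert G\vert$ matrix $\V$ say exactly that $T_p\in\Mor(\V^{\otimes k},\V^{\otimes l})$ for every $p\in\CC(k,l)$, where $T_p$ is formed on $\C^{N\vert G\vert}$; and a short bookkeeping shows that this operator coincides with the averaged operator $M_p$ of the trivially coloured partition $p$, since summing $T_{(g_1,\dots,g_{b(p)}).p}$ over all block colourings reinstates precisely the constraint that the composite indices $(i,g)$ agree along each block. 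On the other hand, the equivariance relation $\V_{i,g}^{j,h}=\V_{i,sg}^{j,sh}$ is equivalent to $\rho(s)=\ii\otimes\lambda_G(s)\in\Mor(\V,\V)$ for all $s\in G$. Hence $B_N(G,\CC)$ is the universal C*-algebra generated by a unitary whose fundamental representation has $\{M_p:p\in\CC\}\cup\{\rho(s):s\in G\}$ among its intertwiners, so by Woronowicz's theorem $B_N(G,\CC)=C(\G)$, where $\G$ is the compact quantum group whose concrete representation category is generated by these maps with $\V$ as fundamental object.

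The next point to resolve is a left/right mismatch: the averaging in Definition \ref{de:averagedoperators} uses the \emph{left} action of $G$ on the colours, and the equivariance built into the sudoku matrix is governed by the left regular representation $\lambda_G$, whereas the through-block averaged operators $M_p$ produce \emph{right} multiplications on the colours. I would reconcile the two by the colour-inversion unitary $W=\ii\otimes J$ on $V$, where $J e_i^g=e_i^{g^{-1}}$. Conjugation by $W^{\otimes k}$ fixes each $T_p$ (inversion is a bijection of the colour set and so preserves the equality pattern of composite indices), while it carries $\rho(s)=\ii\otimes\lambda_G(s)$ to $\ii\otimes R_s$, where $R_s$ is right multiplication by $s$; the latter is exactly the averaged operator of the coloured identity partition with relative colour $s^{-1}$, hence lies in $\Span\{M_p:p\in\CC^G\}$. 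Thus $W^{*}\V W$ again generates $B_N(G,\CC)$, and all generators of its intertwiner category sit inside $\Span\{M_p:p\in\CC^G\}$; since the latter is the genuine concrete C*-tensor category $\mathfrak{C}_{\wr}(\CC,G,N)$, this already yields the inclusion $\Mor_{B_N(G,\CC)}((W^{*}\V W)^{\otimes k},(W^{*}\V W)^{\otimes l})\subseteq\Span\{M_p:p\in\CC^G(k,l)\}$.

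For the reverse inclusion I would show that the category generated by the uncoloured operators $M_p$ together with the colour-shift operators $\ii\otimes R_s$ already exhausts every averaged operator $M_p$ with $p\in\CC^G$. Concretely, such an $M_p$ depends only on the underlying uncoloured partition $\underline{p}$ and on the relative colours carried by its blocks, and any such datum can be produced by pre- and post-composing $M_{\underline{p}}$ with tensor products of the elementary colour-shift operators; the exact effect of these compositions is what Lemma \ref{lem:compositionaveraged} computes, while linear independence (Proposition \ref{prop:linearindependence} in the non-crossing case, or its analogue) ensures that no collapse occurs and the spanning is sharp. Granting this, the intertwiner spaces of $W^{*}\V W$ equal $\Span\{M_p:p\in\CC^G\}$, which by construction of $\mathfrak{C}_{\wr}(\CC,G,N)$ are the morphism spaces of the fundamental representation of $G\wr\G_N(\CC)$. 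As two compact matrix quantum groups whose fundamental representations have identical intertwiner spaces are isomorphic, the uniqueness in Woronowicz's Tannaka--Krein theorem gives $B_N(G,\CC)\simeq C(G\wr\G_N(\CC))$, and the theorem follows.

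The main obstacle is this last step: proving that the category generated by the sudoku and $\CC$-relations is not merely contained in, but actually equal to, $\Span\{M_p:p\in\CC^G\}$. The inclusion coming from the relations is formal, but the surjectivity onto all coloured averaged operators requires a careful decomposition of an arbitrary $M_p$ into uncoloured operators and colour shifts, controlled through the composition formula of Lemma \ref{lem:compositionaveraged}. This is a generation statement of the delicate flavour already visible in Remark \ref{rem:conjugationinvariance}, so some care is needed to verify that for the representation at hand the spanning is exact.
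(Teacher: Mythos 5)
Your proposal is correct and follows essentially the same route as the paper: reduce to $B_{N}(G,\CC)$ via Lemma \ref{lem:presentationsudoku}, identify the $\CC$-relations with the condition that the operators $M_{\widetilde{p}}$ (trivially coloured partitions) are intertwiners and the sudoku invariance with the condition that the colour-shift operators $M_{\vert_{g}^{e}}$ are intertwiners, and observe that these together generate all $M_{q}$ for $q\in\CC^{G}$ by composition. The only adjustment needed is to your assessment of the final ``obstacle'': that generation statement is the easy direction (each point of $q$ is met by exactly one strand of a tensor of identity partitions, so pre- and post-composing $M_{\widetilde{p}}$ with colour shifts realises any colouring with no loop factors), it is \emph{not} the delicate invariance problem of Remark \ref{rem:conjugationinvariance} (which asks for a characterisation of \emph{all} conjugation-invariant operators, a genuinely harder converse), and it requires no linear independence -- appealing to Proposition \ref{prop:linearindependence} would spuriously restrict the argument to non-crossing $\CC$ and $N\geqslant 4$ although the theorem holds for every $N\geqslant 1$; likewise the conjugation by the colour-inversion unitary $W$ is a harmless but unnecessary detour, since the relevant one-sided regular representation already appears directly as the family $\{M_{\vert_{g}^{e}} : g\in G\}$.
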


\begin{proof}
We will in fact prove that $C(G\wr \G_{N}(\CC))\simeq B_{N}(G, \CC)$ and conclude by Lemma \ref{lem:presentationsudoku}. We therefore have to prove that imposing on a unitary matrix $\V$ the relations making $M_{p}\in \Mor(\V^{\otimes k}, \V^{\otimes l})$ for all $p\in \CC^{G}(k, l)$ is the same as imposing the relations in the statement of Lemma \ref{lem:presentationsudoku}. For $p\in \CC$, let us denote by $\widetilde{p}\in \CC^{G}$ the partition obtained by colouring all the points of $p$ with $e$. Let $q\in \CC^{G}$ and let $p$ be the non-coloured partition underlying $p$. It is clear that $M_{q}$ can be obtained by composing $M_{\widetilde{p}}$ with partitions of the form $M_{\vert_{g}^{e}}$. Thus, the defining relations of $C(G\wr \G_{N}(\CC))$ are exactly the ones coming from $M_{\widetilde{p}}$ and $M_{\vert_{g}^{e}}$ for all $p\in \CC$ and $g\in G$. This enables us to split the proof into two parts.

Recall that $V$ is the vector space with basis $(e_{i}^{g})_{1\leqslant i\leqslant N, g\in G}$. If we identify $V$ with $\C^{N\times \vert G\vert}$, then any partition $p\in \CC(k, l)$ defines a linear map $\widetilde{T}_{p} : V^{\otimes k} \rightarrow V^{\otimes l}$ in the usual way. We claim that this map is equal to $M_{\widetilde{p}}$. In fact,
\begin{equation*}
\widetilde{T}_{p} (e_{i_{1}}^{g_{1}} \otimes \dots \otimes e_{i_{k}}^{g_{k}}) = \sum_{j_{1}, \dots, j_{l} = 1}^{N} \sum_{h_1, \dots h_{l} \in G} \delta_{p}(\bo{i}, \bo{j})\delta_{p}((g_{1}, \dots, g_{k}), (h_{1}, \dots, h_{l})) e_{j_{1}}^{h_{1}}\otimes \dots \otimes e_{j_{l}}^{h_{l}}
\end{equation*}
since under our identification of $V$ both the indices and the colours must be constant on the blocks of $p$ to give a non-zero contribution. But the right-hand side is precisely equal to the vector arising from the action of $M_{\widetilde{p}}$ on $e_{i_{1}}^{g_{1}} \otimes \dots \otimes e_{i_{k}}^{g_{k}}$, proving the claim. Therefore, imposing the relations making all $M_{\widetilde{p}}$'s intertwiners is the same as imposing the $\CC$-relations to the matrix $\V$.

We now further claim that the fact that $M_{\vert_{g}^{e}}$ is an intertwiner of $\V$ means exactly that $\V_{i, s}^{j, h} = \V_{i, gs}^{j, gh}$ for all $h, s \in G$ and $i, j = 1, \dots, N$. Indeed, let us fix $i\in \{1,\ldots, N\}$ and $ h \in G$. On the one hand,
\begin{equation*}
\V\circ (\ii\otimes M_{\vert_{g}^{e}}) (1\otimes e_{i}^{h}) = \V (1\otimes e_{i}^{gh}) = \sum_{j=1}^{N}\sum_{s\in G} \V_{i, gh}^{j, s}\otimes e_{j}^{s}
\end{equation*}
while on the other hand
\begin{equation*}
(\ii\otimes M_{\vert_{g}^{e}})\circ\V  (1\otimes e_{i}^{h}) = \sum_{j=1}^{N}\sum_{s\in G} \V_{i, h}^{j, s}\otimes M_{\vert_{g}^{e}}(e_{j}^{s}) = \sum_{j=1}^{N}\sum_{s\in G} \V_{i, h}^{j, s}\otimes e_{j}^{gs} = \sum_{j=1}^{N}\sum_{s\in G} \V_{i, h}^{j, g^{-1}s}\otimes e_{j}^{s}.
\end{equation*}
Thus, the second claim is proved. To conclude, simply notice that $\D$ is the unique coproduct turning $\V$ into a (fundamental) representation.
\end{proof}

To conclude this subsection, we describe the \emph{abelianization} procedure. Let $\G$ be any compact quantum group and consider the maximal abelian quotient $C(\G)_{ab}$ of $C(\G)$. The coproduct factorizes through this quotient, endowing $C(\G)_{ab}$ with the structure of an "algebra of continuous functions" on a compact quantum group. Because $C(\G)_{ab}$ is commutative by definition, \cite[Thm 1.5]{woronowicz1987compact} implies that it comes from a compact group $G$, which is called the abelianization of $\G$. This operation easily translates at the level of partition wreath products.

\begin{prop}\label{prop:abelianization}
Let $G$ be a finite group, let $\CC$ be a category of non-crossing partitions and let $N\in \N$. Then, the abelianization of $G\wr \G_{N}(\CC)$ is $G\wr \G_{N}(\CC_{ab})$, where $\CC_{ab}$ is the category of partitions generated by $\CC$ and the crossing $\crosspart\in P(2, 2)$. Note that $\G_{N}(\CC_{ab})$ is precisely the abelianization of $\G_{N}(\CC)$.
\end{prop}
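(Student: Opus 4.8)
The plan is to pass to the alternative presentation of Lemma~\ref{lem:presentationsudoku}, where the partition wreath product is realised as an ordinary partition quantum group subject to extra linear constraints, and then to exploit two facts: that the C*-algebraic abelianization is compatible with quotients, and that for a plain partition quantum group abelianization amounts to adjoining the crossing to the category.

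First I would record, using Theorem~\ref{thm:sudoku} and Lemma~\ref{lem:presentationsudoku}, that $C(G\wr\G_N(\CC))\simeq B_N(G,\CC)$, the universal C*-algebra generated by an $N\vert G\vert\times N\vert G\vert$ unitary matrix $\V$ satisfying the $\CC$-relations together with the $G$-invariance relations $\V_{i,g}^{j,h}=\V_{i,sg}^{j,sh}$. Since the $\CC$-relations are exactly the defining relations of $C(\G_{N\vert G\vert}(\CC))$, the algebra $B_N(G,\CC)$ is the quotient of $C(\G_{N\vert G\vert}(\CC))$ by the closed two-sided ideal $I$ generated by the differences $\V_{i,g}^{j,h}-\V_{i,sg}^{j,sh}$. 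Next, because abelianization is the reflection onto the category of commutative C*-algebras, hence a left adjoint, it preserves quotients: writing $q\colon C(\G_{N\vert G\vert}(\CC))\to C(\G_{N\vert G\vert}(\CC))_{ab}$ for the abelianization map, one has $B_N(G,\CC)_{ab}\simeq C(\G_{N\vert G\vert}(\CC))_{ab}/q(I)$. This is checked by comparing universal properties, since a unital $*$-homomorphism from $B_N(G,\CC)$ to a commutative C*-algebra is the same as one from $C(\G_{N\vert G\vert}(\CC))$ annihilating $I$, equivalently one from the abelianization annihilating $q(I)$.

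The heart of the matter is then the standard description of the abelianization of a partition quantum group. Forcing the coefficients of the fundamental representation $\V$ to commute is equivalent to requiring the flip $T_{\crosspart}$ on $(\C^{N\vert G\vert})^{\otimes 2}$ to lie in $\Mor(\V^{\otimes 2},\V^{\otimes 2})$, and by Woronowicz's Tannaka--Krein theorem this corresponds to enlarging $\CC$ to the category $\CC_{ab}=\langle\CC,\crosspart\rangle$. Thus $C(\G_{N\vert G\vert}(\CC))_{ab}\simeq C(\G_{N\vert G\vert}(\CC_{ab}))$, which in particular (taking $G$ trivial, or the dimension-$N$ version) yields the final remark of the statement that $\G_N(\CC_{ab})$ is the abelianization of $\G_N(\CC)$. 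The image $q(I)$ is generated by the images of the same invariance relations, so $C(\G_{N\vert G\vert}(\CC_{ab}))/q(I)$ is by definition precisely $B_N(G,\CC_{ab})\simeq C(G\wr\G_N(\CC_{ab}))$. To finish, I would note that the abelianized coproduct agrees with the coproduct of Lemma~\ref{lem:presentationsudoku} on $B_N(G,\CC_{ab})$, which is immediate since both are given by the same formula on generators.

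The step requiring most care is the compatibility of abelianization with the quotient by $I$: one must verify that the commutator ideal of $B_N(G,\CC)$ is the image of the commutator ideal of $C(\G_{N\vert G\vert}(\CC))$ and that the invariance relations descend unchanged. This is a formal consequence of the surjectivity of $q$ and the fact that the invariance relations are linear in the generators, so they pose no genuine obstruction. One should also keep track that the crossing is adjoined at the correct dimension $N\vert G\vert$ rather than $N$, but since $T_{\crosspart}$ being an intertwiner is equivalent to commutativity of coefficients irrespective of the dimension, this causes no difficulty.
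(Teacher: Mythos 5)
Your argument is correct and rests on the same key observation as the paper's proof: in the fundamental-matrix presentation coming from Theorem~\ref{thm:sudoku} and Lemma~\ref{lem:presentationsudoku}, passing from $\CC$ to $\CC_{ab}$ adds exactly the relation that $\widetilde{T}_{\crosspart}$ intertwines $\V^{\otimes 2}$, which is precisely commutativity of the entries of $\V$. The paper states this directly, whereas you package the same content through the (valid) formal remarks that abelianization, being a left adjoint, commutes with quotients and that the $G$-invariance relations descend unchanged -- a slightly longer route to the same place.
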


\begin{proof}
According to the proof of Theorem \ref{thm:sudoku}, the only relations we add on the matrix $\V$ are the ones making $\widetilde{T}_{\crosspart}$ an intertwiner. But this operator is an intertwiner if and only if all the entries of the matrix commute with one another, hence the result.
\end{proof}

\subsection{Laws of characters}\label{subsec:characters}

Before turning to examples, we will compute the law of the character of the fundamental representation $\V$ in some particular cases. By this we mean analysing the element
\begin{equation*}
\chi_{\V} = (\ii\otimes \Tr)(\V)\in C(G\wr\G_{N}(\CC))
\end{equation*}
as a noncommutative random variable, and more specifically  investigating its distribution (given by its moments) with respect to the Haar state. It was conjectured in \cite[Conj 3.4]{banica2007free} that when $\G_{1}$ and $\G_{2}$ are compact quantum subgroups of $S_{N}^{+}$, then the law of the character of the fundamental representation of $G_{1}\wr_{\ast}\G_{2}$ is the free multiplicative convolution (see for instance \cite[Def 14.1 and Rem 14.5]{nica2006lectures} for the definition) of the laws of the characters of the fundamental representations of $\G_{1}$ and $\G_{2}$. This was proved for free wreath products of arbitrary compact quantum groups by $S_{N}^{+}$ in \cite[Cor 4.4]{lemeux2014free}. As we will see however, the corresponding statement is false when the right-hand side is not a quantum subgroup of $S_{N}^{+}$. Because of our description of the intertwiner spaces, the computation of the moments reduces to a counting problem on partitions. To lighten the formul\ae{}, we will write $\CC_{n}$ for $\CC(0, n)$.

\begin{prop}\label{prop:moments}
Let $\CC$ be a category of partitions and let $N\in \N$ be such that for all $k, l \in \N$ the collection of linear maps $\{T_{p} : p \in \CC(k,l)\}$ is linearly independent. Then for any finite group $G$ the moments $(m_{n}(\chi))_{n\in\N}$ of the character $\chi_{\V}$ of the fundamental representation $\V$ of $G\wr\G_{N}(\CC)$ with respect to the Haar state are given by the following formula:
\begin{equation*}
m_{n}(\chi_{\V}) = \sum_{p \in \CC_{n}} \prod_{b \subset p} \vert b\vert^{\vert G\vert-1}.
\end{equation*}
\end{prop}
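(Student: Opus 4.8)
The plan is to run the usual character computation through Woronowicz's Tannaka–Krein picture: the moments of a character are dimensions of fixed-vector spaces, and here those spaces are spanned by the averaged operators $M_p$, so the whole problem collapses to a combinatorial count of coloured partitions. Since every quantum group considered here is of Kac type, a standard fact gives
\begin{equation*}
m_n(\chi_{\V}) = \dim\Mor(\V^{\otimes 0}, \V^{\otimes n}).
\end{equation*}
By the very construction of $G\wr\G_N(\CC)$ as the quantum group attached to $\mathfrak{C}_{\wr}(\CC, G, N)$, this morphism space equals $\Span\{M_p : p\in\CC^G(0,n)\}$, so the task reduces to determining the dimension of this span.

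The first substantial step is to identify exactly when two operators $M_p$ coincide and to show that the surviving ones are linearly independent. Writing $M_p=\sum_{\bo g\leqslant p}T_{\bo g.p}$, one sees that $M_p$ depends only on the underlying uncoloured partition $p_0\in\CC_n$ together with the colouring $\bo c_d(p)$ of the $n$ lower points, and that two colourings produce the same operator precisely when they lie in the same orbit of the block-wise left-translation action of $G^{b(p_0)}$. The independence claim I would establish is that, for distinct pairs (underlying partition, colouring-orbit), the operators $M_p$ are linearly independent. The natural route is a \emph{sector} argument: the vector $T_q(1)$ attached to a colouring $\bo c=(c_1,\dots,c_n)$ lies in the subspace $V^{c_1}\otimes\cdots\otimes V^{c_n}$, distinct colourings give orthogonal such subspaces, and inside one fixed colour-sector the operators reduce to the uncoloured maps $T_{p_0}(1)$ for $p_0\in\CC_n$. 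These are linearly independent by hypothesis (for non-crossing $\CC$ and $N\geqslant 4$ this is Proposition \ref{prop:linearindependence}), whence so are the $M_p$ across distinct orbits. Thus $\dim\Mor(\V^{\otimes 0},\V^{\otimes n})$ is exactly the number of pairs $(p_0,\text{orbit})$.

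It then remains to count, for each $p_0\in\CC_n$, the number of orbits of colourings under $G^{b(p_0)}$. Because the action factorises as a product over the blocks, the count is multiplicative over blocks, and on a single block $b$ of size $\vert b\vert$ it is the diagonal left-regular action of $G$ on $G^{\vert b\vert}$. This action is free, so the number of orbits on one block is $\vert G\vert^{\vert b\vert}/\vert G\vert=\vert G\vert^{\vert b\vert-1}$, which is the per-block factor. Multiplying over blocks and summing over $\CC_n$ yields
\begin{equation*}
m_n(\chi_{\V}) = \sum_{p\in\CC_n}\prod_{b\subset p}\vert G\vert^{\vert b\vert-1}.
\end{equation*}

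The main obstacle is the independence step rather than the orbit count. One must rule out that passing from $\CC$ to its coloured refinement $\CC^G$ creates unexpected linear relations among the $M_p$; the orthogonality-of-sectors observation is what localises this difficulty to a single colour-sector, where it becomes precisely the hypothesis on $\{T_p : p\in\CC(k,l)\}$. The free-action computation giving the per-block factor is then elementary finite group theory. I should note that carrying the argument through honestly yields the per-block contribution $\vert G\vert^{\vert b\vert-1}$ (the number of orbits of a free action of $G$ on $G^{\vert b\vert}$); the reader should compare this against the exponents appearing in the displayed statement of the proposition, since a sanity check in the case $G=\Z_2$, $\CC=NC$ (so that $G\wr\G_N(\CC)=H_N^+$) gives $m_3(\chi_{\V})=1+2+2+2+4=11$, matching $\sum_{p}\prod_{b}\vert G\vert^{\vert b\vert-1}$.
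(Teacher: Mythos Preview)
Your proof is correct and follows essentially the same route as the paper's: moments equal dimensions of fixed-vector spaces, the span of the $M_p$'s is analysed by the colour-sector decomposition reducing to the assumed linear independence of the uncoloured $T_p$'s, and the final count is done block-by-block. The only cosmetic difference is that the paper picks explicit orbit representatives (those coloured partitions whose leftmost point in each block is $e$) rather than speaking of orbits directly; your observation that the displayed formula in the statement should read $\vert G\vert^{\vert b\vert-1}$ rather than $\vert b\vert^{\vert G\vert-1}$ is also correct and is confirmed by the paper's own proof and its subsequent corollaries.
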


\begin{proof}
It follows from the results of S.L. Woronowicz in \cite{woronowicz1987compact} that the moments in question are given by the formula
\begin{equation}\label{eq:fixedpoints}
m_{n}(\chi_{\V}) = \dim(\Mor(\varepsilon, \V^{\otimes n}))
\end{equation}
where $\varepsilon$ denotes the trivial representation. Let us denote by $\CC^{G}_{n}(e)$ the set of partitions in $\CC^{G}_{n}$ for which the leftmost colour in every block of $p$ is the neutral element $e$. For any $p\in \CC^{G}_{n}$, there is a unique $q\in \CC^{G}_{n}(e)$ such that $M_{p} = M_{q}$, so that the space on the right-hand side of Equation \eqref{eq:fixedpoints} is spanned by $\{M_{p} : p \in \CC^{G}_{n}(e)\}$. We claim that these operators are linearly independent. To prove it, let us consider a vanishing linear combination
\begin{equation*}
X = \sum_{p\in \CC_{n}(e)}\lambda_{p}M_{p} = 0.
\end{equation*}
Using the definition of $M_{p}$, we can expand this sum as
\begin{equation*}
X = \sum_{p\in \CC_{n}(e)}\lambda_{p}\sum_{\bo{x}\leqslant p}T_{\bo{x}.p}.
\end{equation*}
We will now reorder the terms of this sum according to the colouring of the arguments of the $T$'s. Note that for any tuple $\bo{g} = (g_{1}, \dots, g_{n})$ of elements of $G$ and for each $p\in \CC_{n}(e)$, there is at most one $\bo{x}\leqslant p$ such that $\bo{x}.p $ has colouring $\bo{g}$. Let us write $p\sim \bo{g}$ if such an $\bo{x}$ exists and let us denote by $\overline{p}$ the uncoloured partition underlying $p$. Then,
\begin{equation*}
X = \sum_{\bo{g}}\left(\sum_{\underset{p\in \CC_{n}(e)}{p\sim \bo{g}}}\lambda_{p}T_{\overline{p}(\bo{g})}\right) = \sum_{\bo{g}} X(\bo{g}).
\end{equation*}
The operators $X(\bo{g})$ for different tuples $\bo{g}$ act on orthogonal subspaces so that $X = 0$ implies $X(\bo{g}) = 0$ for each $\bo{g}$. When considering a fixed $X(\bo{g})$, the colouring is fixed so that we can use the linear independence assumption to conclude that $\lambda_{p} = 0$ for all $p\sim\bo{g}$, proving the linear independence. We can now compute
\begin{equation*}
m_{n}(\chi) = \left\vert\CC^{G}_{n}(e)\right\vert = \sum_{q \in \CC_{n}} \left\vert\{p\in\CC^{G}_{n}(e) : \bar{p} = q\}\right\vert = \sum_{q \in \CC_{n}} \prod_{b \subset q} \vert G\vert^{\vert b\vert-1},
\end{equation*}
where the last equality is obtained by counting all the possible colourings.
\end{proof}

By Proposition \ref{prop:linearindependence}, the assumptions of Proposition \ref{prop:moments} are satisfied if $\CC$ is a category of non-crossing partitions and $N \geqslant 4$. Let us denote by $\chi_{G}\in C(G)$ the character of the left regular representation of $G$ and by $\mu_{G}$ its law with respect to the Haar measure. This yields a probability measure depending only on the cardinality of $G$. Since it will be convenient to express the results in terms of $\mu_{G}$, we recall the explicit formula (see for instance \cite[Prop 4.1]{banica2007free}):
\begin{equation*}
\mu_{G} = \left(1 - \frac{1}{\vert G\vert}\right)\delta_{0} + \frac{1}{\vert G\vert}\delta_{\vert G\vert}.
\end{equation*}
We start with the case of quantum permutations, i.e. $\CC = NC$.

\begin{cor}
Assume that $\CC = NC$, let $N \geqslant 4$ and let $G$ be a finite group. Then the moments $(m_{n}(\chi_{\V}))_{n\in \N}$ of $\chi_{\V}$ are given by
\begin{equation*}
m_{n}(\chi_{\V}) = \sum_{p \in NC(n)} \prod_{b \subset p} \vert G\vert^{\vert b\vert-1}.
\end{equation*}
\end{cor}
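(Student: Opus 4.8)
The plan is to deduce this directly from Proposition \ref{prop:moments}, since the statement is nothing but the specialization of that general moment formula to the category $\CC = NC$. The only thing that genuinely requires verification is that the hypotheses of Proposition \ref{prop:moments} hold in this case.

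First I would observe that $NC$ is indeed a category of partitions, so Proposition \ref{prop:moments} is applicable as soon as its linear-independence assumption is met. That assumption asks that $\{T_{p} : p \in \CC(k,l)\}$ be linearly independent for all $k, l \in \N$. For $\CC = NC$ this is exactly the content of Proposition \ref{prop:linearindependence}, which guarantees linear independence of $(T_{p})_{p \in NC(k,l)}$ whenever $N \geqslant 4$. Since the Corollary hypothesizes $N \geqslant 4$, the assumption is satisfied and Proposition \ref{prop:moments} applies verbatim to $G \wr \G_{N}(NC)$.

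It then remains only to read off the formula. Substituting $\CC = NC$ into $m_{n}(\chi_{\V}) = \sum_{p \in \CC_{n}} \prod_{b \subset p} \vert G\vert^{\vert b\vert - 1}$ and recalling the convention $\CC_{n} = \CC(0,n)$ (so that here $\CC_{n} = NC(0,n) = NC(n)$) yields precisely the claimed expression. There is no real obstacle beyond bookkeeping: the entire content of the Corollary is already packaged in the two preceding results, and the argument amounts to checking that the bound $N \geqslant 4$ triggers the linear-independence hypothesis of Proposition \ref{prop:moments}. I would also flag in passing that the exponent in the displayed formula of Proposition \ref{prop:moments} ought to read $\vert G\vert^{\vert b\vert - 1}$ rather than $\vert b\vert^{\vert G\vert - 1}$, as is clear from the final line of its proof and from the present Corollary; the specialization inherits the corrected form.
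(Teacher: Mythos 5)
Your proof is correct and is exactly the argument the paper intends: the corollary is the specialization of Proposition \ref{prop:moments} to $\CC = NC$, with the hypothesis verified by Proposition \ref{prop:linearindependence} for $N \geqslant 4$, precisely as the paper notes in the sentence preceding the corollary. Your observation that the displayed exponent in Proposition \ref{prop:moments} should read $\vert G\vert^{\vert b\vert - 1}$ (as in the last line of its proof and in the corollaries) is also correct.
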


Using the formalism of free cumulants (see for instance \cite[Def 11.3]{nica2006lectures}), we see that the $n$-th free cumulant of the distribution of the character $\chi_{\V}$ is equal to $\vert G\vert^{n-1}$ for every $n \in \N$. In other words, the law of $\chi_{\V}$ is a free compound Poisson law with initial law $\mu_{G}$ and parameter $1$. This recovers a particular case of \cite[Prop 4.3]{lemeux2014free}. In particular, the law of $\chi_{\V}$ is the free multiplicative convolution of $\mu_{G}$ and the law of the fundamental representation of $S_{N}^{+}$ (which is a free Poisson law with parameter $1$). The situation is quite different if we consider only non-crossing pair partitions, i.e. the case of $O_{N}^{+}$.

\begin{cor}
Assume that $\CC = NC_{2}$, let $N \geqslant 4$ and let $G$ be a finite group. Then the even moments $(m_{2n}(\chi_{\V}))_{n\in \N}$ of $\chi_{\V}$ are given by
\begin{equation*}
m_{2n}(\chi_{\V}) = \sum_{p \in NC_{2}(2n)} \vert G\vert^{n}
\end{equation*}
and the odd moments vanish.
\end{cor}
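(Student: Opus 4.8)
The plan is to obtain the statement as an immediate specialisation of Proposition \ref{prop:moments} to the case $\CC = NC_{2}$, so the real work reduces to checking that the hypotheses apply and then evaluating the resulting product over blocks. First I would verify the linear independence assumption: since $NC_{2}(k, l) \subseteq NC(k, l)$ for all $k, l \in \N$, Proposition \ref{prop:linearindependence} ensures that the family $\{T_{p} : p \in NC_{2}(k, l)\}$ is linearly independent whenever $N \geqslant 4$. Hence the hypothesis of Proposition \ref{prop:moments} is met, and that proposition yields
\[
m_{n}(\chi_{\V}) = \sum_{p \in NC_{2}(n)} \prod_{b \subset p} \vert G\vert^{\vert b\vert - 1}.
\]

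Next I would exploit the defining property of $NC_{2}$: every block of a partition in $NC_{2}(n)$ is a pair, so that $\vert b\vert = 2$ and each factor $\vert G\vert^{\vert b\vert - 1}$ equals $\vert G\vert$. For an even number $2n$ of points, a non-crossing pairing consists of exactly $n$ blocks, so the product over the blocks equals $\vert G\vert^{n}$ independently of $p$; summing over $NC_{2}(2n)$ then gives precisely $\sum_{p \in NC_{2}(2n)} \vert G\vert^{n}$, as claimed. For the odd moments I would simply observe that $NC_{2}(m)$ is empty whenever $m$ is odd, since a partition all of whose blocks have size two can only live on an even number of points; the corresponding sum in Proposition \ref{prop:moments} is therefore empty and $m_{2n+1}(\chi_{\V}) = 0$.

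I do not expect any genuine obstacle here, as the whole argument is a routine evaluation of the product appearing in Proposition \ref{prop:moments}. The only point requiring any care is the verification of the linear independence hypothesis, which is exactly where the assumption $N \geqslant 4$ enters through Proposition \ref{prop:linearindependence}; everything else is bookkeeping about block sizes and the parity of the number of points.
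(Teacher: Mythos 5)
Your proposal is correct and follows exactly the route the paper intends: the corollary is an immediate specialisation of Proposition \ref{prop:moments} (whose hypothesis holds for $N\geqslant 4$ by Proposition \ref{prop:linearindependence}, a subfamily of a linearly independent family being linearly independent), with each block of size $2$ contributing a factor $\vert G\vert$ and $NC_{2}(m)=\emptyset$ for odd $m$. Note only that the displayed formula in the statement of Proposition \ref{prop:moments} has the base and exponent transposed; you correctly used the form $\vert G\vert^{\vert b\vert-1}$ established in its proof and in the surrounding corollaries.
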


Again this is easily interpreted in terms of cumulants: $k_{2}(\chi_{\V}) = \vert G\vert$ and all other cumulants vanish. Thus, $\chi_{\V}$ is a centred semi-circular variable with variance $\vert G\vert$. By \cite[Ex 14.21]{nica2006lectures} (see also \cite[Thm 4.4]{banica2007free}), the free multiplicative convolution of $\mu_{G}$ and any probability distribution $\mu$ is given by
\begin{equation*}
\mu_{G}\boxtimes\mu = \left(1 - \frac{1}{\vert G\vert}\right)\delta_{0} + \frac{1}{\vert G\vert}\mu^{\boxplus \vert G\vert}.
\end{equation*}
The free additive convolution of semi-circular laws is the semicircular law whose variance is the sum of the variances. Thus, the law of $\chi_{\V}$ is \emph{not} the free multiplicative convolution of $\mu_{G}$ and the law of the fundamental representation of $O_{N}^{+}$ (which is a centred semicircular law with variance $1$). Our last example will be the hyperoctahedral case, were $\CC = NC_{ev}$ is the category of all non-crossing partitions with blocks of even size.

\begin{cor}
Assume that $\CC = NC_{ev}$, let $N \geqslant 4$ and let $G$ be a finite group. Then, the even moments $(m_{2n}(\chi))_{n\in \N}$ of $\chi_{\V}$ are given by
\begin{equation*}
m_{2n}(\chi_{\V}) = \sum_{p \in NC_{ev}(2n)}\prod_{b\subset p} \vert G\vert^{\vert b\vert - 1}
\end{equation*}
and the odd moments vanish.
\end{cor}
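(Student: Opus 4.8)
The plan is to derive this corollary as an immediate specialisation of Proposition~\ref{prop:moments} to the category $\CC = NC_{ev}$; the only genuine tasks are to verify the hypotheses of that proposition and then to exploit the parity constraint built into $NC_{ev}$.

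First I would confirm that $NC_{ev}$ -- the non-crossing partitions all of whose blocks have even cardinality -- is indeed a category of partitions; this is standard, as it is stable under all the category operations and contains the identity partition (it is the category underlying the free hyperoctahedral group $H_{N}^{+}$). Next I would check the linear independence hypothesis appearing in Proposition~\ref{prop:moments}, namely that $\{T_{p} : p \in NC_{ev}(k,l)\}$ is linearly independent for all $k, l$. Since $NC_{ev}(k,l) \subseteq NC(k,l)$ and $N \geqslant 4$, Proposition~\ref{prop:linearindependence} tells us that the larger family $\{T_{p} : p \in NC(k,l)\}$ is linearly independent, and any subfamily of a linearly independent family is again linearly independent, so the hypothesis holds.

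With the hypotheses established, Proposition~\ref{prop:moments} applies directly and gives
\begin{equation*}
m_{n}(\chi_{\V}) = \sum_{p \in NC_{ev}(n)} \prod_{b \subset p} \vert G\vert^{\vert b\vert - 1}
\end{equation*}
for every $n \in \N$. It then remains only to split according to the parity of $n$: since every block of a partition in $NC_{ev}$ has even size, the total number of points $n = \sum_{b} \vert b\vert$ must be even for such a partition to exist. Hence $NC_{ev}(n) = \emptyset$ when $n$ is odd, forcing all odd moments to vanish, while for even $n$ the displayed formula is exactly the stated expression for the even moments.

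I do not expect any serious obstacle, as this is a direct application of the general moment formula. The only points needing (routine) care are the transfer of linear independence from $NC$ down to the subcategory $NC_{ev}$, where I must be sure to invoke Proposition~\ref{prop:linearindependence} for every pair $(k,l)$ as required by the hypothesis of Proposition~\ref{prop:moments}, and the elementary counting argument that rules out partitions with an odd number of points.
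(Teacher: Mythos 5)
Your proposal is correct and follows exactly the route the paper intends: the paper states immediately after Proposition \ref{prop:linearindependence} that the hypotheses of Proposition \ref{prop:moments} hold for any category of non-crossing partitions when $N \geqslant 4$ (precisely your subfamily argument), and the corollary is then the specialisation to $\CC = NC_{ev}$ together with the observation that $NC_{ev}(n) = \emptyset$ for odd $n$ since all blocks have even size. Nothing is missing.
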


As in the quantum permutation case, this is a compound free Poisson law with parameter $1$. The initial law $\mu$ has vanishing odd moments and even moments equal to $\vert G\vert^{n-1}$. The only probability measure with these moments is
\begin{equation*}
\mu = \frac{1}{2\vert G\vert}\delta_{-\vert G\vert} + \left(1-\frac{1}{\vert G\vert}\right)\delta_{0} + \frac{1}{2\vert G\vert}\delta_{\vert G\vert}.
\end{equation*}
Note that this measure can be obtained as the \emph{classical} multiplicative convolution of $\mu_{G}$ by the Bernoulli measure
\begin{equation*}
\nu = \frac{1}{2}\delta_{-1} + \frac{1}{2}\delta_{1}.
\end{equation*}

\subsection{Two examples}

In some cases, it is possible to relate our construction to usual (free) wreath products by the (free) permutation groups. This can be seen using the sudoku picture explained above.

\subsubsection{The hyperoctahedral case}\label{subsec:hyperoctahedralgeneral}

Our first example is the partition wreath product with the hyperoctahedral quantum group $H_{N}^{+}$. This corresponds to the category of partitions $\CC = NC_{ev}$ consisting of all non-crossing partitions with blocks of even size.

\begin{thm}\label{thm:hyperoctahedralgeneral}
Let $N \geqslant 1$ be an integer and let $G$ be a finite group. Then, $G\wr_{\ast} H_{N}^{+}$ is isomorphic to $(G\times \Z_{2})\wr S_{N}^{+}$.
\end{thm}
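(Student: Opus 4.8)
First I would flag that the theorem as displayed reads "$G\wr_{\ast}H_{N}^{+}$" on the left, but the entire subsection concerns the \emph{partition} wreath product $G\wr\G_{N}(NC_{ev})=G\wr H_{N}^{+}$; I will read the intended claim as $G\wr H_{N}^{+}\simeq(G\times\Z_{2})\wr_{\ast}S_{N}^{+}$, consistent with the isomorphism $G\wr H_{N}^{+}\simeq(G\times\Z_{2})\wr_{\ast}S_{N}^{+}$ announced in the introduction.

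The plan is to work entirely through the sudoku presentation of Theorem~\ref{thm:sudoku}, so that $C(G\wr H_{N}^{+})\simeq A_{N}(G,NC_{ev})$, and to identify this C*-algebra with the free wreath product algebra $C((G\times\Z_{2})\wr_{\ast}S_{N}^{+})$. First I would recall that $C((G\times\Z_{2})\wr_{\ast}S_{N}^{+})$ is, by J.\,Bichon's construction, the quotient of $C(G\times\Z_{2})^{\ast N}\ast C(S_{N}^{+})$ by the relations $\nu_{i}(a)v_{ij}=v_{ij}\nu_{i}(a)$, where $v$ is the magic unitary of $S_{N}^{+}$ and $\nu_{i}$ embeds the $i$-th copy of $C(G\times\Z_{2})$. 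Since $G$ is finite, $C(G\times\Z_{2})$ is spanned by the group-algebra-type generators; concretely each copy carries a unitary representation of $G\times\Z_{2}$, i.e.\ a family of unitaries indexed by $G$ together with one self-adjoint unitary coming from the $\Z_{2}$ factor. I would build an explicit $\ast$-homomorphism between the two algebras by matching these generators against the sudoku data.

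The heart of the argument is the combinatorial identification $NC_{ev}$-relations-on-a-$(G,N)$-sudoku-matrix $\longleftrightarrow$ magic-unitary-relations-twisted-by-$G\times\Z_{2}$. Here I would use the observation already exploited in the proof of Theorem~\ref{thm:sudoku}: the $NC_{ev}$-relations are generated by the four-block partition together with the $G$-colouring partitions $\vert_{g}^{e}$. The category $NC_{ev}$ differs from $NC$ precisely by forbidding odd blocks, and the defining partition of $H_{N}^{+}$ (the block of size four, or equivalently $\ker$ of the double singleton) is exactly what forces, on the level of the sudoku matrix, the extra sign behaviour. The key step is to show that the entries $(Q_{g})_{ij}$ of the $(G,N)$-sudoku matrix can be decomposed so that the "permutation part" $\sum_{g}Q_{g}=:v$ is a magic unitary of $S_{N}^{+}$, while the remaining freedom reorganises into a copy of $C(G\times\Z_{2})$ in each slot $i$, commuting with $v$ in exactly Bichon's way. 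The $\Z_{2}$ appears because the even-block (hyperoctahedral) relation produces, on each through-block, one extra order-two symmetry beyond the $G$-colouring, precisely the sign $\pm1$ that upgrades $G$ to $G\times\Z_{2}$.

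I expect the main obstacle to be this very last point: verifying that the $NC_{ev}$-relations (as opposed to the full $NC$-relations) contribute exactly one independent $\Z_{2}$ and not more, and that this $\Z_{2}$ is \emph{central} and genuinely splits off as a direct factor $G\times\Z_{2}$ rather than some nontrivial extension. Concretely, after writing $Q_{g}=p_{g}d_{g}$ for a projection-like permutation part $p_{g}$ and a "phase" $d_{g}$, one must check that the even-block relations force $d_{g}^{2}$ to be scalar and that the induced involution commutes with all $Q_{h}$. I would handle this by testing the intertwiner condition $M_{p}\in\Mor(\V^{\otimes k},\V^{\otimes l})$ for the generating even partition on basis vectors $e_{i}^{g}$, exactly as in the two displayed computations in the proof of Theorem~\ref{thm:sudoku}, and reading off the resulting algebraic relations among the $\V_{i,g}^{j,h}$. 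Once the generators and relations of both universal C*-algebras are shown to coincide, the coproducts automatically agree (each being the unique coproduct making the fundamental matrix a representation, as noted at the end of the proof of Theorem~\ref{thm:sudoku}), and the quantum group isomorphism follows.
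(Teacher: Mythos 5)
Your reading of the statement (partition wreath product on the left, free wreath product on the right) is the intended one, and your starting point --- passing to the sudoku presentation $C(G\wr H_{N}^{+})\simeq A_{N}(G,NC_{ev})$ --- matches the paper. But the core of your argument has a genuine gap, and the specific mechanism you propose for extracting the extra $\Z_{2}$ does not work. The $NC_{ev}$-relations make each entry $\U_{g,i}^{h,j}=(Q_{hg^{-1}})_{ij}$ a \emph{self-adjoint partial isometry} (a self-adjoint $a$ with $a^{3}=a$), not a projection. Consequently your candidate ``permutation part'' $v_{ij}=\sum_{g}(Q_{g})_{ij}$ is a sum of pairwise orthogonal self-adjoint partial isometries, hence again a self-adjoint partial isometry: the matrix $(v_{ij})$ is cubic, i.e.\ a fundamental representation of $H_{N}^{+}$, \emph{not} a magic unitary for $S_{N}^{+}$ (this is exactly what the remark after Theorem \ref{thm:freewreath} warns about). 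Likewise the factorisation $Q_{g}=p_{g}d_{g}$ into a ``projection-like part'' and a ``phase'' with $d_{g}^{2}$ scalar is not available for partial isometries, and you explicitly leave the decisive verification (that exactly one central $\Z_{2}$ splits off) as an unresolved obstacle. That verification \emph{is} the proof.

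The missing idea is a spectral doubling trick, borrowed from the proof of \cite[Thm 3.4]{banica2009fusion}: for each entry $a=\U_{g,h}^{i,j}$ one sets $a^{\pm}=\tfrac{1}{2}(a\pm a^{2})$, so that (up to sign) $a^{+}$ and $a^{-}$ are the positive and negative spectral projections of $a$, with $a^{+}+a^{-}=a$ and $a^{+}a^{-}=0$. Assembling these into the block matrix
\begin{equation*}
\W=\left(\begin{array}{cc}\U^{+}&\U^{-}\\ \U^{-}&\U^{+}\end{array}\right)
\end{equation*}
produces a magic unitary of size $2N\vert G\vert=N\vert G\times\Z_{2}\vert$, which one then checks is precisely a $(G\times\Z_{2},N)$-sudoku matrix built from the blocks $Q_{r}^{\pm}$. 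Since the passage $\U\mapsto\W$ is reversible, one gets $A_{N}(G,NC_{ev})\simeq A_{N}(G\times\Z_{2},NC)$ compatibly with the coproducts, and Theorem \ref{thm:sudoku} (together with Theorem \ref{thm:freewreath} to identify $(G\times\Z_{2})\wr S_{N}^{+}$ with Bichon's free wreath product) finishes the argument. So the $\Z_{2}$ does not arise as a central ``phase'' commuting with the $Q_{h}$; it indexes the two spectral halves of each partial isometry. Without this step your outline cannot be completed as written.
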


\begin{proof}
Let $A_{N}(G, NC_{2})$ be the C*-algebra of Definition \ref{de:sudoku}. Recall that it is generated by an $N\vert G\vert \times N\vert G\vert$ unitary matrix $\U$ satisfying the $NC_{ev}$-relations (i.e. each entry is a self-adjoint partial isometry) and such that it has the form $\U = (Q_{h^{-1}g})_{g,h\in G}$ for some $N\times N$ matrices of operators $(Q_{r})_{r\in G}$. We are going to use a "doubling trick" to produce a bigger sudoku matrix. Let us set, for every $g, h\in G$ and $i, j= 1, \dots, N$,
\begin{equation*}
(U^{i,j}_{g,h})^{+} = \frac{1}{2}\left(U^{i, j}_{g, h} + (U^{i, j}_{g, h})^{2} \right) \text{ and } (U^{i, j}_{g, h})^{-} = \frac{1}{2}\left(U^{i, j}_{g, h} - (U^{i, j}_{g, h})^{2} \right).
\end{equation*}
We will denote by $\U^{+}$ the matrix obtained by replacing the coefficients by their "$+$" part and by $\U^{-}$ the one obtained with the "$-$" part. It follows from the proof of point $(2)$ of \cite[Thm 3.4]{banica2009fusion} (with $s = 2$) that the matrix
\begin{equation*}
\W = \left(\begin{array}{cc}
\U^{+} & \U^{-} \\
\U^{-} & \U^{+} \\
\end{array}\right)
\end{equation*}
is a magic unitary of size $2N\vert G\vert = N\vert G \times\Z_{2}\vert$ and that the unique coproduct turning $\W$ into a representation is compatible with that of $A_{N}(G, \CC)$. Consider now for every $r\in G$ and for $t = 0, 1$ the following $N$ by $N$ matrices:
\begin{equation*}
P_{r,t} = \left\{\begin{array}{ccc}
Q_{r}^{+} & \text{if} & t=0 \\
Q_{r}^{-} & \text{if} & t=1 \\
\end{array}\right..
\end{equation*}
Then, $\W$ is equal to the sudoku matrix $(P_{(h,t)^{-1}(g,u)})_{(g,u),(h,t) \in G\times \Z_{2}}$. The procedure transforming $\U$ into $\W$ is clearly reversible, hence the C*-algebra $A_{N}(G, NC_{ev})$ is isomorphic to the C*-algebra $A_{N}(G\times \Z_{2}, NC)$ and the isomorphism preserves the respective coproducts. Applying Theorem \ref{thm:sudoku} now yields the result.
\end{proof}

\begin{rem}
We will prove in Theorem \ref{thm:freewreath} that $G\wr S_{N}^{+}$ is in fact isomorphic to $G\wr_{\ast}S_{N}^{+}$ for any finite group $G$. Thus, Theorem \ref{thm:hyperoctahedralgeneral} really gives an explicit description of the partition wreath product $G\wr H_{N}^{+}$. Moreover, it shows that there is no general "associativity" property of the partition wreath product since
\begin{equation*}
G\wr H_{N}^{+} = G\wr (\Z_{2}\wr S_{N}^{+}) = (G\times \Z_{2})\wr S_{N}^{+} \neq (G\wr\Z_{2})\wr S_{N}^{+}.
\end{equation*}
\end{rem}

Note that the isomorphism of Lemma \ref{lem:presentationsudoku} between $A_{N}(G, \CC)$ and $B_{N}(G, \CC)$ does not change the character of the defining fundamental representation, whereas the procedure used in the above proof does. With the notations used above, the character of $\U$ in $G\wr_{\ast} H_{N}^{+} $ has the form
\begin{equation*}
\chi = \vert G\vert\sum_{i=1}^{N} Q_{e}^{i, i}
\end{equation*}
whereas the character of $\W$ in $(G\times \Z_{2}) \wr_{\ast} S_{N}^{+}$ is
\begin{equation*}
\tilde{\chi} = 2\vert G\vert\sum_{i=1}^{N} (Q_{e}^{(i, i)})^{+} = \vert G\vert\sum_{i=1}^{N} \left(Q_{e}^{(i, i)} + (Q_{e}^{(i, i)})^{2}\right).
\end{equation*}
This is the reason why the initial law of the character of $\V$ computed in Subsection \ref{subsec:characters} is not $\mu_{G\times\Z_{2}}$. Using abelianization, we can also describe the classical case.

\begin{cor}\label{cor:abelianizedhyperoctahedral}
Let $N\geqslant 1$ be an integer and let $G$ be a finite group. Then, $G\wr H_{N}$ is isomorphic to $(G\times \Z_{2})\wr S_{N}$.
\end{cor}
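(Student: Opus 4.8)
The plan is to obtain this classical statement by \emph{abelianizing} the quantum isomorphism of Theorem~\ref{thm:hyperoctahedralgeneral}. Recall that $H_N^+ = \G_N(NC_{ev})$ and $S_N^+ = \G_N(NC)$, while the classical hyperoctahedral group is $H_N = \G_N(P_{ev})$ and the symmetric group is $S_N = \G_N(P)$, where $P_{ev}$ denotes the category of all (possibly crossing) partitions with blocks of even size and $P$ the category of all partitions. The isomorphism $G\wr H_N^+ \simeq (G\times\Z_2)\wr S_N^+$ is an isomorphism of compact quantum groups, that is a coproduct-intertwining $*$-isomorphism of the underlying C*-algebras; such a map descends to the maximal abelian quotients and therefore induces an isomorphism between the abelianizations of the two sides. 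It thus suffices to identify these two abelianizations as $G\wr H_N$ and $(G\times\Z_2)\wr S_N$ respectively.

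For the left-hand side, Proposition~\ref{prop:abelianization} applied to $\CC = NC_{ev}$ shows that the abelianization of $G\wr H_N^+ = G\wr\G_N(NC_{ev})$ is $G\wr\G_N((NC_{ev})_{ab})$, where $(NC_{ev})_{ab}$ is the category generated by $NC_{ev}$ and the crossing partition $\crosspart$. The point to record here is the standard combinatorial fact that adjoining the crossing to $NC_{ev}$ generates exactly $P_{ev}$, so that $\G_N((NC_{ev})_{ab}) = \G_N(P_{ev}) = H_N$ and the abelianization is $G\wr H_N$. For the right-hand side, Proposition~\ref{prop:abelianization} applied to $\CC = NC$ gives that the abelianization of $(G\times\Z_2)\wr S_N^+ = (G\times\Z_2)\wr\G_N(NC)$ is $(G\times\Z_2)\wr\G_N(P)$, and Corollary~\ref{cor:classicalpermutation} identifies this last object with the usual permutational wreath product $(G\times\Z_2)\wr S_N$.

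Putting these identifications together with the functoriality of abelianization yields $G\wr H_N \simeq (G\times\Z_2)\wr S_N$. I expect the only genuine point requiring care to be the verification that $\G_N((NC_{ev})_{ab}) = H_N$, that is, that the category generated by $NC_{ev}$ and $\crosspart$ is all of $P_{ev}$; this is a routine category-of-partitions computation of the same type as the well-known identity $\langle NC,\crosspart\rangle = P$ underlying $\G_N((NC)_{ab}) = S_N$, and everything else is a formal consequence of Proposition~\ref{prop:abelianization}, Corollary~\ref{cor:classicalpermutation} and Theorem~\ref{thm:hyperoctahedralgeneral}.
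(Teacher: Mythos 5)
Your proposal is correct and follows exactly the route the paper takes: the paper's proof is the one-line observation that the corollary follows from Theorem \ref{thm:hyperoctahedralgeneral} together with Proposition \ref{prop:abelianization}, which is precisely the abelianization argument you spell out. The extra details you supply (in particular that $\langle NC_{ev},\crosspart\rangle = P_{ev}$, so that $\G_{N}((NC_{ev})_{ab}) = H_{N}$) are correct and are exactly what the paper leaves implicit.
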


\begin{proof}
This follows from Theorem \ref{thm:hyperoctahedralgeneral} and Proposition \ref{prop:abelianization}.
\end{proof}

\subsubsection{The case of free permutations}\label{subsec:permutation}

Recall that our justification for the notation $G\wr\G_{N}(\CC)$ was that when $\G_{N}(\CC)$ is the permutation group $S_{N}$, we recover the usual wreath product $G\wr S_{N}$. To conclude this section, we will now prove that if $\G_{N}(\CC)$ is the quantum permutation group $S_{N}^{+}$, then our construction yields the free wreath product $G\wr_{\ast} S_{N}^{+}$.

\begin{thm}\label{thm:freewreath}
Let $N\geqslant 1$ be an integer and let $G$ be a finite group. The quantum group $G \wr \G_{N}(NC)$ is isomorphic to the free wreath product $G\wr_{\ast}S_{N}^{+}$.
\end{thm}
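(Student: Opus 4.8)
The plan is to establish the isomorphism at the level of universal C*-algebras, comparing $A_N(G,NC) \simeq C(G\wr\G_N(NC))$ (via Theorem~\ref{thm:sudoku}) with the free wreath product algebra $C(G\wr_\ast S_N^+)$ defined by Bichon. By the characterization of $NC$ as the category generated by all non-crossing partitions, the $NC$-relations on the sudoku matrix $\U = [Q_{h^{-1}g}]_{g,h\in G}$ amount precisely to requiring that $\U$ be a \emph{magic unitary} of size $N\vert G\vert$, that is, each row and column of $\U$ consists of orthogonal projections summing to $1$. So the first step is to unwind what the magic unitary condition plus the sudoku structure say about the individual blocks $Q_r \in M_N(A)$.

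\medskip

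The key computation is to translate the magic-unitary relations on $\U$ into relations on the $Q_r$'s. Each entry $\U_{g,i}^{h,j} = (Q_{h^{-1}g})_{i,j}$ must be a projection, and the row/column sum conditions must hold. I would introduce for each $1\leqslant i,j\leqslant N$ the entries $(Q_r)_{ij}$ and show that the block structure forces, on the one hand, that for each fixed pair $(i,j)$ the elements $\{(Q_r)_{ij} : r\in G\}$ behave like the canonical generators of $C(G)$ — so they furnish a copy of $C(G)$ inside $A_N(G,NC)$ — and on the other hand that the $N\times N$ matrix obtained by "forgetting the $G$-labels" (i.e. the matrix $v$ with $v_{ij} = \sum_{r\in G}(Q_r)_{ij}$) is a magic unitary, hence a copy of $C(S_N^+)$. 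The plan is to define explicitly the $*$-homomorphism $C(G)^{\ast N}\ast C(S_N^+)\to A_N(G,NC)$ sending the $i$-th copy of $C(G)$ to the algebra generated by the $(Q_r)_{ii}$ (the diagonal in the permutation index) and the fundamental matrix $v$ of $S_N^+$ to $(v_{ij})$, then verify it descends to the quotient defining the free wreath product (the relation $\nu_i(a)v_{ij} = v_{ij}\nu_i(a)$), and conversely build a map back by specifying where the sudoku generators go.

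\medskip

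To make this precise I would exploit that in a magic unitary the diagonal projections $(Q_e)_{ii}$ play the role of $v_{ii}$ in $S_N^+$ while the off-diagonal $G$-labelled projections encode the copies of $G$ localized at each coordinate $i$; concretely, for each fixed $i$ the family $\{(Q_r)_{ii}\}_{r\in G}$ should be a partition of the projection $v_{ii}$ into $\vert G\vert$ orthogonal pieces indexed by $G$, giving the $i$-th free factor $\nu_i(C(G))$. The commutation relation $\nu_i(a)v_{ij}=v_{ij}\nu_i(a)$ of Bichon's construction should emerge from the sudoku identity $(Q_r)_{ik}$ versus the block pattern $h^{-1}g$: multiplying the $G$-label at coordinate $i$ commutes past the permutation structure precisely because the sudoku matrix is built from the single index $h^{-1}g$. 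I would check both composite maps are identities on generators, using universality on each side.

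\medskip

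The main obstacle I anticipate is the bookkeeping in verifying that Bichon's defining relation $\nu_i(a)v_{ij}=v_{ij}\nu_i(a)$ is equivalent to — and not merely implied by — the magic-unitary-plus-sudoku relations, and dually that no extra relations are hidden; in particular one must confirm that the free product structure (genuine freeness of the $N$ copies of $C(G)$ amalgamated only through the $S_N^+$ data) is exactly reproduced, with the correct coproduct matching under the identifications. Establishing the coproduct compatibility is essentially automatic once the generators are matched, since $\D$ is the unique coproduct making the fundamental representation a representation (as invoked at the end of the proof of Theorem~\ref{thm:sudoku}), so the real work is the algebra isomorphism on generators and relations.
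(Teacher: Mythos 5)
Your overall strategy is the paper's: identify the $NC$-relations on the sudoku matrix as the magic unitary relations, build surjective $*$-homomorphisms in both directions between $A_N(G,NC)$ and $C(G\wr_\ast S_N^+)$ using universality, and check they are mutually inverse on generators. However, there is a genuine gap in your choice of where to send the $i$-th free factor $C(G)$. You propose to use the diagonal family $\{(Q_r)_{ii}\}_{r\in G}$, but these projections sum to $v_{ii}=\sum_r (Q_r)_{ii}$, which is a single entry of the magic unitary $v$ and is \emph{not} the unit. Hence they do not define a unital $*$-homomorphism from $C(G)\simeq \C^{\vert G\vert}$, which is required to get a map out of the unital free product $C(G)^{\ast N}\ast C(S_N^+)$. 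Worse, since $(Q_r)_{ii}=\U_{e,i}^{r,i}$ and $(Q_s)_{ij}=\U_{e,i}^{s,j}$ lie in the same row of the magic unitary $\U$, they are orthogonal whenever $(r,i)\neq(s,j)$, so $(Q_r)_{ii}\,v_{ij}=0$ for all $j\neq i$; the commutation relation then holds only degenerately and the image of your map cannot contain the off-diagonal generators $(Q_r)_{ij}$, so it is not surjective and cannot invert the map in the other direction.

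The fix, which is what the paper does, is to localize the copy of $C(G)$ at coordinate $i$ by summing over the \emph{column} index of the $N\times N$ blocks rather than restricting to the diagonal: set
\begin{equation*}
u^{(i)}_{g,h}=\sum_{k=1}^{N}(Q_{hg^{-1}})_{ik},\qquad v_{ij}=\sum_{g\in G}(Q_g)_{ij}.
\end{equation*}
The row-sum condition of the magic unitary $\U$ gives $\sum_{h}u^{(i)}_{e,h}=1$, so the $u^{(i)}_{g,h}$ are commuting pairwise orthogonal projections summing to $1$ and (using $u^{(i)}_{g,h}=u^{(i)}_{e,hg^{-1}}$) generate a genuine unital copy of $C(G)$; moreover one computes $u^{(i)}_{g,h}v_{ij}=(Q_{hg^{-1}})_{ij}=v_{ij}u^{(i)}_{g,h}$, which simultaneously yields Bichon's relation and recovers every generator of $A_N(G,NC)$, making the map surjective. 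With this correction the rest of your plan (the reverse map $\Phi$ sending $(Q_g)_{ij}\mapsto u^{(i)}_{e,g}v_{ij}$, the coproduct compatibility being automatic, and there being no need to verify ``genuine freeness'' directly since everything is handled by universality of both sides) goes through as you describe.
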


\begin{proof}
The proof is a generalisation of the one of \cite[Thm 3.4]{banica2009fusion}. Recall that the $NC$-relations are the so-called \emph{magic relations}, i.e. the matrix $\U$ is unitary and all its coefficients are orthogonal projections. Let us first consider the free wreath product $G\wr_{\ast} S_{N}^{+}$, where the fundamental representation $u\in M_{\vert G\vert}(C(G))$ of $C(G)$ is given by $u_{g, h}(t) = \delta(h, tg)$. Notice that $u$ is a magic unitary matrix and that $u_{g, h} = u_{e, hg^{-1}}$. Considering the $i$-th copy $u^{(i)}$ of this fundamental representation, we set
\begin{equation*}
(Q_{g})_{ij} = u_{e, g}^{(i)}v_{ij}.
\end{equation*}
Then, $\U_{g, i}^{h, j} = u_{g, h}^{(i)}v_{ij} = (Q_{hg^{-1}})_{ij}$ is a $(G, N)$-sudoku matrix. Because $u^{(i)}$ commutes with $v_{ij}$ for all $1\leqslant j\leqslant N$ and both $u$ and $(v_{ij})_{ij}$ are magic unitaries, $\U$ is a magic unitary matrix. Thus, there is a surjective $*$-homomorphism $\Phi : A_{N}(G, NC)\longrightarrow C(G\wr_{\ast} S_{N}^{+})$ given by $\Phi(\U_{g, i}^{h, j}) = u_{g, h}^{(i)}v_{ij}$.

Consider now the generating $(G, N)$-sudoku matrix $\U = (Q_{h^{-1}g})_{g, h\in G}$ of $A_{N}(G, NC)$ and set, for $1\leqslant i, j\leqslant N$ and $g, h\in G$,
\begin{equation*}
u_{g, h}^{(i)} = \sum_{k=1}^{N}(Q_{hg^{-1}})_{ik} \text{ and } v_{ij} = \sum_{g\in G}(Q_{g})_{ij}.
\end{equation*}
Because $\U$ is magic unitary, $(v_{ij})_{ij}$ is a magic unitary matrix. Using the fact that $u_{g, h}^{(i)} = u_{e, hg^{-1}}^{(i)}$ and that $\U$ is magic unitary, we see that for a fixed $i$, the elements $\{u_{g, h}^{(i)}: g, h \in G\}$ are commuting pairwise orthogonal projections summing up to $1$. This means that the algebra generated by these elements is of the form $C(H)$ for some subgroup $H$ of $S_{\vert G\vert}$. Moreover, the equality $u_{g, h}^{(i)} = u_{e, hg^{-1}}^{(i)}$ implies that in fact $H = G$. Thus, there is a unique surjective $*$-homomorphism $\Psi : C(G)^{\ast N}\ast C(S_{N}^{+}) \longrightarrow A_{N}(G, NC)$ such that the fundamental representation of $S_{N}^{+}$ is sent to $(v_{ij})_{ij}$ and the fundamental representation of the $i$-th copy of $C(G)$ is sent to $(u^{(i)}_{g, h})_{g, h}$. Using the fact that the $(G, N)$-sudoku matrix $\U$ is magic, we have for all $i,j=1,\ldots,N, g,h \in G$
\begin{eqnarray*}
u_{g, h}^{(i)}v_{ij} & = & \left(\sum_{k=1}^{N}(Q_{hg^{-1}})_{ik}\right)\left(\sum_{t\in G}(Q_{t})_{ij}\right) \\
& = & \sum_{k=1}^{N}\sum_{t\in G}\delta(t, hg^{-1})\delta(k, j)(Q_{hg^{-1}})_{ik}(Q_{t})_{ij} \\
& = & (Q_{hg^{-1}})_{ij} \\
& = & \left(\sum_{t\in G}(Q_{t})_{ij}\right)\left(\sum_{k=1}^{N}(Q_{hg^{-1}})_{ik}\right) \\
& = & v_{ij}u_{g, h}^{(i)}
\end{eqnarray*}
Thus, $\Psi$ factorizes through a surjective $*$-homomorphism $\widetilde{\Psi} : C(G\wr_{\ast}S_{N}^{+}) \longrightarrow A_{N}(G, NC)$. Checking that $\Phi$ and $\widetilde{\Psi}$ are inverse to each other is now routine calculation.
\end{proof}

\begin{rem}
Note that here we are considering the finite group $G$ as a \emph{compact} group. The compact quantum group $G\wr S_{N}^{+}$ is therefore not the same as the free wreath product $\widehat{G}\wr_{*} S_{N}^{+}$ studied by F. Lemeux in \cite{lemeux2013fusion}. It however fits into the more general setting of free wreath products of compact quantum groups by $S_{N}^{+}$ treated by F. Lemeux and P. Tarrago in \cite{lemeux2014free}. By restricting to finite groups, our approach gives a different description of the situation.
\end{rem}

\begin{rem}
Note that combining Theorem \ref{thm:freewreath} with the abelianization result of Proposition \ref{prop:abelianization}, we recover the classical case of Corollary \ref{cor:classicalpermutation}.
\end{rem}

\begin{rem}
Part of the proof of Theorem \ref{thm:freewreath} applies in full generality, i.e.\ not only when $\CC= NC$. Indeed, one can easily check that the formula $v_{ij} = \sum_{g\in G} (Q_{g})_{ij}$ always defines a copy of the fundamental representation of $\G_{N}(\CC)$ inside $C(G\wr \G_{N}(\CC))$. Moreover,
\begin{equation*}
u^{(i)}_{g, h} = \sum_{k=1}^{N} (Q_{hg^{-1}})_{ik}
\end{equation*}
always defines a unitary matrix $(u^{(i)}_{g, h})_{g,h \in G}$ with coefficients in $A(G, \CC)$. However, this unitary matrix will not be magic if $\U$ is not assumed to be magic, hence it is \emph{not} a copy of the regular representation of $G$. As some examples of Subsection \ref{subsec:examples} will show, one cannot in general recover canonical copies of the algebra $C(G)$ inside $C(G\wr \G_{N}(\CC))$.
\end{rem}

To end this section, let us point out the two main differences between our partition wreath product and the free wreath product:
\begin{enumerate}
\item Our construction is a generalisation of the wreath product in the sense that when $\G_{N}(\CC)$ is a classical group, the construction yields a classical group. On the contrary, the free wreath product $G\wr_{\ast} S_{N}$ is \emph{not} isomorphic to the wreath product $G\wr S_{N}$ as soon as $G$ is non-trivial.
\item The definition of the free wreath product only yields a compact quantum group when the right-hand side is a quantum subgroup of $S_{N}^{+}$. Our construction works for many compact quantum groups which are not quantum subgroups of $S_{N}^{+}$ but does not work for arbitrary quantum subgroups of $S_{N}^{+}$.
\end{enumerate}

\section{Partition wreath products of abelian groups as partition quantum groups}\label{sec:abelian}

In this section, we will give yet another picture of the partition wreath product $G\wr\G_{N}(\CC)$ when $G$ is abelian. More precisely, we will show that $G\wr\G_{N}(\CC)$ is then isomorphic to a \emph{partition quantum group}. This means that its representation theory can be described using the results of \cite{freslon2013representation}, as explained in \cite{freslon2014partition}. The drawback of this approach is of course the restriction to abelian groups. We will explain in the end of the section why this restriction seems necessary.

\subsection{Basic representations}

The first step is to decompose the fundamental representation $\V$ into a sum of irreducible representations. The latter will then be our building blocks for the study of $G\wr \G_{N}(\CC)$. To do this, we have to find minimal projections in the algebra $\Mor(\V, \V)$. Recall that $G$ is now assumed to be abelian and set, for a character $\chi\in \h{G}$,
\begin{equation*}
P_{\chi} = \frac{1}{\vert G\vert}\sum_{g\in G}\chi(g)M_{\vert^{g}_{e}}.
\end{equation*}
We denote the trivial character in $\h{G}$ by $\varepsilon$.

\begin{lem}\label{lem:basicprojections}
The operators $P_{\chi}$ are pairwise orthogonal self-adjoint projections summing to $1$. Moreover, for every non-trivial $\chi \in \h{G}$ the projection $P_{\chi}$ is minimal in $\Mor(\V, \V)$ and $P_{\varepsilon}$ is either minimal in $\Mor(\V, \V)$ (when $\CC(1,1)$ does not contain the two-block partition) or decomposes into two minimal projections (if $\CC(1,1)$  contains the two-block partition).
\end{lem}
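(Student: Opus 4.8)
The plan is to reduce the whole statement to a complete description of the commutative $*$-algebra $\Mor(\V,\V)=\Span\{M_p:p\in\CC^G(1,1)\}$, which is feasible because $P(1,1)$ contains only two partitions. First I would pin down the operators $P_\chi$. A direct computation from Definition \ref{de:averagedoperators}, under the identification $V\cong\C^N\otimes\C[G]$ (with $e_i^g\leftrightarrow e_i\otimes e^g$), shows that $M_{\vert^{g}_{e}}$ acts as $\ii\otimes\lambda_G(g)$, where $\lambda_G$ is the regular representation of $G$; equivalently, Lemma \ref{lem:compositionaveraged} (with $\rl=0$) yields $M_{\vert^{g}_{e}}M_{\vert^{h}_{e}}=M_{\vert^{gh}_{e}}$, $M_{\vert^{e}_{e}}=\ii$ and $(M_{\vert^{g}_{e}})^{*}=M_{\vert^{g^{-1}}_{e}}$. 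Hence $g\mapsto M_{\vert^{g}_{e}}$ is a unitary representation of $G$ and $P_\chi=\frac{1}{|G|}\sum_g\chi(g)M_{\vert^{g}_{e}}$ is the isotypic projection attached to $\chi$. Self-adjointness, pairwise orthogonality and $\sum_\chi P_\chi=1$ then follow from the character orthogonality relations (in particular $\sum_\chi\chi(g)=|G|\delta_{g,e}$ for completeness); since $G$ is abelian each $\chi\in\h{G}$ occurs in $\ii\otimes\lambda_G$ with multiplicity $N$, so $\mathrm{rank}(P_\chi)=N$.

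Next I would describe $\Mor(\V,\V)$ itself. The set $P(1,1)$ consists of exactly the identity partition $\vert$ and the two-block partition, and $\vert\in\CC(1,1)$ always holds by Definition \ref{de:categroypartitions}, so $\CC(1,1)$ is one of these two sets. The $G$-colourings of $\vert$, after normalising the leftmost colour to $e$, give precisely the $|G|$ operators $M_{\vert^{g}_{e}}$, whose span is the $|G|$-dimensional commutative algebra $\bigoplus_\chi\C P_\chi\cong\C^{|G|}$ in which the $P_\chi$ are the minimal projections. If the two-block partition is \emph{not} in $\CC(1,1)$, this exhausts $\Mor(\V,\V)$ and every $P_\chi$ (including $P_\varepsilon$) is minimal, settling that case.

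Now suppose the two-block partition lies in $\CC(1,1)$ and set $R:=M_{\text{two-block}}$. As the two singleton blocks are coloured independently and one averages over all colourings, every colouring produces the same operator, and a one-line computation gives $R\,e_i^g=\Omega$ for all $i,g$, where $\Omega=\sum_{j,h}e_j^h$; thus $R=|\Omega\rangle\langle\Omega|$ with $\|\Omega\|^2=N|G|$, so $R_0:=R/(N|G|)$ is a rank-one projection. The crucial point is that $\Omega=\big(\sum_j e_j\big)\otimes\big(\sum_h e^h\big)$ and $\sum_h e^h$ is exactly the trivial-character vector $f_\varepsilon$, so $\Omega\in\mathrm{ran}(P_\varepsilon)$. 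This yields $R_0\leqslant P_\varepsilon$, $P_\varepsilon R_0=R_0=R_0P_\varepsilon$ and $P_\chi R_0=0$ for $\chi\neq\varepsilon$; in particular $R_0$ commutes with every $P_\chi$, so $\Mor(\V,\V)=\Span\{P_\chi\}_\chi+\C R_0$ is commutative. Being commutative, its minimal projections are its atoms, which by the above are $\{P_\chi:\chi\neq\varepsilon\}$ together with $R_0$ and $P_\varepsilon-R_0$ (the latter nonzero since $\mathrm{rank}(P_\varepsilon)=N>1$). Hence $P_\chi$ is minimal for each non-trivial $\chi$, while $P_\varepsilon=R_0+(P_\varepsilon-R_0)$ splits into the two minimal projections $R_0$ and $P_\varepsilon-R_0$, as claimed.

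I expect the only real obstacle to be the third step: one must identify the averaged two-block operator precisely, check that all its colourings collapse to the single rank-one operator $R$, and — most importantly — locate its range inside $\mathrm{ran}(P_\varepsilon)$, as this is exactly what forces the splitting to occur on the trivial character and on no other. The remaining ingredients are the standard Fourier calculus for finite abelian groups and elementary bookkeeping about the two-element set $P(1,1)$.
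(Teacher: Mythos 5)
Your argument is correct and follows essentially the same route as the paper's own proof: the multiplication rule $M_{\vert^{g}_{e}}M_{\vert^{h}_{e}}=M_{\vert^{gh}_{e}}$ together with character orthogonality gives the projection properties, and the observation that $\CC(1,1)$ contains only the identity partition and possibly the two-block partition bounds $\dim\Mor(\V,\V)$ by $\vert G\vert$ or $\vert G\vert+1$, forcing minimality. You merely make explicit the step the paper dismisses as ``easy to see'', namely that the averaged two-block operator equals $N\vert G\vert$ times a rank-one projection whose range vector $\Omega$ lies in $\mathrm{ran}(P_{\varepsilon})$, which is precisely why the extra splitting happens at the trivial character and nowhere else.
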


\begin{proof}
Note that for all $g,h \in G$ we have the equality
\begin{equation*}
M_{\vert^{g}_{e}}M_{\vert^{h}_{e}} = M_{\vert^{gh}_{e}}.
\end{equation*}
The fact that the operators are self-adjoint projections is straightforward, pairwise orthogonality follows from the orthogonality relations for characters and the fact that the projections sum to $1$ is a consequence of the equality $\sum_{\chi \in \h{G}} \chi (g)= \vert G\vert\delta_{e,g}$ for all $g \in G$. Moreover, $\Mor(\V, \V)$ is spanned by the operators $M_{\vert^{g}_{e}}$ (and possibly also the one corresponding to the partition $p$ in $\CC(1,1)$ consisting of the two one-point sets, if the corresponding partition is in the category), hence has dimension at most $\vert G\vert$, or $\vert G \vert +1$ (in the second case). Thus in the case where $p$ is not in $\CC$ all the projections $P_\chi$ are minimal, and when $p \in \CC$ it is easy to see that $P_{\varepsilon}$ contains non-trivially the projection corresponding to $p$.
\end{proof}

We can now decompose $\V$. Set, for $\chi\in \h{G}$, $u^{\chi} = (\ii\otimes P_{\chi})(\V)$. By Lemma \ref{lem:basicprojections}, the representations $u^{\chi}$ are pairwise non-equivalent irreducible representations (if the two-block partition belongs to $\CC(1,1)$ -- otherwise all $u^{\chi}$ for $\chi\neq \varepsilon$ are irreducible and the representation $u^{\varepsilon}$ decomposes into two irreducible representations, one of which is trivial), whose direct sum is $\V$. Let us give a name to these representations.

\begin{de}
The representations $u^{\chi}$ are called the \emph{basic representations} of $G\wr\G_{N}(\CC)$.
\end{de}

Because $\V$ can be completely decomposed using basic representations, it is enough to study tensor products of these basic representations to understand the full representation theory of $G\wr \G_{N}(\CC)$. Let $\chi_{1}, \dots, \chi_{k}$ and $\rho_{1}, \dots, \rho_{l}$ be characters of $G$. Then,
\begin{equation*}
\Mor(u^{\chi_{1}}\otimes \dots \otimes u^{\chi_{k}}, u^{\rho_{1}}\otimes\dots \otimes u^{\rho_{l}}) = (P_{\chi_{1}}\otimes \dots\otimes P_{\chi_{k}})\Mor(\V^{\otimes k}, \V^{\otimes l})(P_{\rho_{1}}\otimes \dots\otimes P_{\rho_{l}}).
\end{equation*}
To give a better description of this space, we will first define some specific operators in $\Mor(\V^{\otimes k}, \V^{\otimes l})$.

\subsection{Averaging morphisms by characters}

We now introduce operators in $\Mor(\V^{\otimes k}, \V^{\otimes l})$ obtained by taking linear combinations whose coefficients are given by characters of $G$. This definition involves many parameters and is therefore complicated to write down. To keep things tractable, we first introduce some additional notations. Recall that to a non-coloured partition $p\in P(k, l)$ and two tuples $\bo{g} = (g_{1}, \dots, g_{k})$, $\bo{h} = (h_{1}, \dots, h_{l})$ of elements of $G$ we can associate a coloured partition $p(\bo{g}, \bo{h})$ by colouring (from left to right) the lower points of $p$ by $\bo{g}$ and its upper points (also from left to right) by $\bo{h}$. If $\bo{\chi} = (\chi_{1}, \dots, \chi_{k})$ is a tuple of characters of $G$, we set, for a tuple $\bo{g} = (g_{1}, \dots, g_{k})$ of elements of $G$, $\bo{\chi}(\bo{g}) = \chi_{1}(g_{1})\dots \chi_{k}(g_{k})$.

\begin{de}
Let $p\in P(k, l)$ be a partition and let $\bo{\chi} = (\chi_{1}, \dots, \chi_{k})$ and $\bo{\rho} = (\rho_{1}, \dots, \rho_{l})$ be tuples of characters of $G$. The associated averaged operator is
\begin{equation*}
F_{p}(\bo{\chi}, \bo{\rho}) = \sum_{\bo{g} = (g_{1}, \dots, g_{k})}\sum_{\bo{h} = (h_{1}, \dots, h_{l})}\bo{\chi}(\bo{g})\overline{\bo{\rho}}(\bo{h})M_{p(\bo{g}, \bo{h})}.
\end{equation*}
\end{de}

The first important fact about the map $F_{p}$ is that it is $0$ unless the tuples of characters satisfy a certain compatibility condition which will be called \emph{$p$-admissibility}. To explain this condition, let us first introduce some notations.  If $b$ is a block of a partition $p\in P(k, l)$ and if $\bo{a} = (a_{1}, \dots, a_{k})$ is any tuple, then we denote by $\bo{a}^{b}$ the restriction of the tuple $\bo{a}$ to the indices which belong to the upper row of $b$. Similarly, if $\bo{a} = (a_{1}, \dots, a_{l})$ then we denote by $\bo{a}_{b}$ its restriction to the indices which belong to the lower row of $b$. If $\bo{\chi} = (\chi_{1}, \dots, \chi_{k})$ is a tuple of characters of $G$, we set $\prod\bo{\chi} = \chi_{1}\dots \chi_{k}$.

\begin{de}
Let $p\in P(k, l)$ and consider two tuples $\bo{\chi} = (\chi_{1}, \dots, \chi_{k})$ and $\bo{\rho} = (\rho_{1}, \dots, \rho_{l})$ of characters of $G$. The pair $(\bo{\chi}, \bo{\rho})$ is said to be $p$-admissible if for any block $b$ of $p$, $\prod\bo{\chi}_{b} = \prod\bo{\rho}^{b}$.
\end{de}

\begin{lem}\label{lem:admissible}
Let $p\in P(k, l)$ and let $\bo{\chi} = (\chi_{1}, \dots, \chi_{k})$ and $\bo{\rho} = (\rho_{1}, \dots, \rho_{l})$ be two tuples of characters of $G$. If $(\bo{\chi}, \bo{\rho})$ is not $p$-admissible, then $F_{p}(\bo{\chi}, \bo{\rho}) = 0$.
\end{lem}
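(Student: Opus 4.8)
The plan is to exploit the built-in redundancy of the averaged operators $M_q$: by Definition \ref{de:averagedoperators}, $M_q$ is a sum over all block-wise left translations of $q$, so it is invariant under the action of $G^{b(p)}$ on the colouring. Concretely, writing $B = b(p)$ and letting $\bo{t} = (t_{1}, \dots, t_{B}) \in G^{B}$ act on a coloured partition $p(\bo{g}, \bo{h})$ by left-multiplying every point of the $i$-th block by $t_{i}$, the reindexing $\bo{u} \mapsto \bo{u}\bo{t}$ in the defining sum of $M$ shows at once that $M_{\bo{t}.p(\bo{g}, \bo{h})} = M_{p(\bo{g}, \bo{h})}$. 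Since $\bo{t}.p(\bo{g}, \bo{h})$ is again of the form $p(\bo{g}', \bo{h}')$ for the translated colouring $(\bo{g}', \bo{h}') = \bo{t}.(\bo{g}, \bo{h})$, the operator appearing in $F_{p}(\bo{\chi}, \bo{\rho})$ depends only on the $G^{B}$-orbit of the colouring.

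Next I would compute how the scalar coefficient transforms under this action. Because $G$ is abelian, each character is a homomorphism, so $\bo{\chi}(\bo{g}')\overline{\bo{\rho}}(\bo{h}')$ factors as $\bo{\chi}(\bo{g})\overline{\bo{\rho}}(\bo{h})$ times a product over the blocks of $p$: for each block $b$ the translation by $t_{b}$ contributes the value at $t_{b}$ of the character $\psi_{b} := (\prod \bo{\chi}_{b})\,\overline{(\prod \bo{\rho}^{b})} \in \h{G}$ obtained by multiplying together all the characters of $\bo{\chi}$ and $\overline{\bo{\rho}}$ attached to the points of $b$. In other words, the coefficient picks up the factor $\prod_{b \subset p} \psi_{b}(t_{b})$. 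The definition of $p$-admissibility is precisely the requirement that every $\psi_{b}$ be the trivial character $\varepsilon$; hence non-admissibility means $\psi_{b_{0}} \neq \varepsilon$ for at least one block $b_{0}$.

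Finally I would average the identity $F_{p}(\bo{\chi}, \bo{\rho}) = \sum_{\bo{g}, \bo{h}} \bo{\chi}(\bo{g})\overline{\bo{\rho}}(\bo{h}) M_{p(\bo{g}, \bo{h})}$ over $\bo{t} \in G^{B}$: replacing the summation variable $(\bo{g}, \bo{h})$ by $\bo{t}.(\bo{g}, \bo{h})$ leaves $F_{p}$ unchanged by the invariance of $M$, so
\[
F_{p}(\bo{\chi}, \bo{\rho}) = \sum_{\bo{g}, \bo{h}} \bo{\chi}(\bo{g})\overline{\bo{\rho}}(\bo{h}) \left( \frac{1}{\vert G\vert^{B}} \sum_{\bo{t} \in G^{B}} \prod_{b \subset p} \psi_{b}(t_{b}) \right) M_{p(\bo{g}, \bo{h})}.
\]
The bracketed factor splits as $\prod_{b \subset p} \left( \vert G\vert^{-1} \sum_{t \in G} \psi_{b}(t) \right)$, and by the orthogonality relations for characters each factor equals $\delta_{\psi_{b}, \varepsilon}$. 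If the pair is not $p$-admissible, the factor for $b_{0}$ vanishes, so every coefficient is zero and $F_{p}(\bo{\chi}, \bo{\rho}) = 0$.

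The argument is essentially bookkeeping once the right mechanism is identified; the only real point requiring care is the invariance $M_{\bo{t}.q} = M_{q}$ together with the observation that translating the colouring of $p(\bo{g}, \bo{h})$ block by block is exactly the $G^{B}$-action under which $M$ is constant. After that, the main obstacle is purely notational — correctly matching upper and lower points to their blocks so that the per-block characters $\psi_{b}$ are assembled consistently — and the abelianness of $G$ enters twice, once through the multiplicativity that produces the factor $\prod_{b \subset p} \psi_{b}(t_{b})$ and once through character orthogonality.
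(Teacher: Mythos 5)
Your proof is correct and follows essentially the same route as the paper's: both exploit the invariance $M_{\bo{t}.p(\bo{g},\bo{h})}=M_{p(\bo{g},\bo{h})}$ to average over $G^{b(p)}$, factor the resulting inner sum into a product over the blocks of $p$, and kill it by character orthogonality when some block character $\psi_{b}$ is nontrivial. The paper phrases the averaging as a change of variables $\bo{g}'=\bo{x}^{d}\bo{g}$, $\bo{h}'=\bo{x}^{u}\bo{h}$ inside the sum rather than averaging the whole identity over $\bo{t}$, but this is the same computation.
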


\begin{proof}
If $\bo{x} = (x_{1}, \dots, x_{b(p)})$, then $M_{\bo{x}.p(\bo{g}, \bo{h})} = M_{p(\bo{g}, \bo{h})}$. Thus,
\begin{equation*}
F_{p}(\bo{\chi}, \bo{\rho}) = \sum_{\bo{g}, \bo{h}}\bo{\chi}(\bo{g})\overline{\bo{\rho}}(\bo{h})\vert G\vert^{-b(p)}\sum_{\bo{x}\leqslant p}M_{\bo{x}.p(\bo{g}, \bo{h})}
\end{equation*}
where as in Lemma \ref{lem:compositionaveraged} the notation $\bo{x}\leqslant p$ means that we are considering a tuple $\bo{x} = (x_{1}, \dots, x_{b(p)})$. Making the changes of variables $\bo{g}' = \bo{x}^{d}\bo{g}$ and $\bo{h}' = \bo{x}^{u}\bo{h}$ yields
\begin{equation*}
F_{p}(\bo{\chi}, \bo{\rho}) = \vert G\vert^{-b(p)}\sum_{\bo{g}', \bo{h}'}\sum_{\bo{x}\leqslant p}\bo{\chi}((\bo{x}^{d})^{-1}\bo{g}')\overline{\bo{\rho}}((\bo{x}^{u})^{-1}\bo{h}')M_{p(\bo{g}', \bo{h}')}.
\end{equation*}
We can rename the variables to remove the primes and make the change of variables $\bo{x}\mapsto \bo{x}^{-1}$ to lighten the expression, thus obtaining
\begin{equation*}
F_{p}(\bo{\chi}, \bo{\rho}) = \vert G\vert^{-b(p)}\sum_{\bo{g}, \bo{h}}\left(
\sum_{\bo{x}\leqslant p}\bo{\chi}(\bo{x}^{d})\overline{\bo{\rho}}(\bo{x}^{u})\right)\bo{\chi}(\bo{g})\overline{\bo{\rho}}(\bo{h})M_{p(\bo{g}, \bo{h})}.
\end{equation*}
Let us consider the term in parenthesis. We are summing over tuples $\bo{x}$ which are indexed by the blocks of $p$. Thus, the sum can be written as a product over the blocks:
\begin{equation*}
\prod_{b\subset p}\left(\sum_{x\in G}(\prod\bo{\chi}_{b})(x)(\prod\overline{\bo{\rho}}^{b})(x)\right).
\end{equation*}
Because of the orthogonality relations for characters, this product vanishes as soon as there is a block $b$ for which $\prod\bo{\chi}_{b} \neq \prod\bo{\rho}^{b}$, hence the result.
\end{proof}

The main feature of the maps $F_{p}$ is their interplay with the category operations and in particular with the composition.

\begin{prop}\label{prop:compositionfp}
Let $p\in P(l, m)$, $q\in P(k, l)$ and fix tuples $(\bo{\chi}, \bo{\rho})$ and $(\bo{\chi}', \bo{\rho}')$. Then,
\begin{equation*}
F_{p}(\bo{\chi}, \bo{\rho})F_{q}(\bo{\chi}', \bo{\rho}') = \vert G\vert^{b(p)+b(q)-b(pq)+l}N^{\rl(p, q)}\delta(\bo{\rho}, \bo{\chi}')F_{pq}(\bo{\chi}, \bo{\rho}').
\end{equation*}
\end{prop}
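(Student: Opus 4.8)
The plan is to reduce the statement about the averaged operators $F_p$ to the composition formula for the operators $M_p$ already established in Lemma \ref{lem:compositionaveraged}, and then to carry out the character sums carefully. First I would expand both factors using the definition of $F_p$, writing
\begin{equation*}
F_{p}(\bo{\chi}, \bo{\rho})F_{q}(\bo{\chi}', \bo{\rho}') = \sum_{\bo{g}, \bo{h}}\sum_{\bo{s}, \bo{t}}\bo{\chi}(\bo{g})\overline{\bo{\rho}}(\bo{h})\bo{\chi}'(\bo{s})\overline{\bo{\rho}'}(\bo{t})\,M_{p(\bo{g}, \bo{h})}M_{q(\bo{s}, \bo{t})}.
\end{equation*}
Here $\bo{h}$ colours the lower row of $p$ and $\bo{s}$ colours the upper row of $q$, both being tuples indexed by the $l$ points in the middle row of the composition. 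Lemma \ref{lem:compositionaveraged} tells me that the product $M_{p(\bo{g}, \bo{h})}M_{q(\bo{s}, \bo{t})}$ vanishes unless the colourings $\bo{h}$ and $\bo{s}$ can be matched after block-multiplication, and otherwise equals $(\vert G\vert N)^{\rl(p, q)}$ times a single operator $M_{pq(\cdots)}$ whose colouring is read off the matched data.

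The main obstacle, and the heart of the computation, is organizing the sum over the middle-row colourings $\bo{h}$ and $\bo{s}$. Because of the $p$-admissibility reduction already exploited in Lemma \ref{lem:admissible}, I may assume $(\bo{\chi}, \bo{\rho})$ is $p$-admissible and $(\bo{\chi}', \bo{\rho}')$ is $q$-admissible, since otherwise both sides vanish. The key algebraic point is that the nonvanishing of $M_{p(\bo{g}, \bo{h})}M_{q(\bo{s}, \bo{t})}$ forces $\bo{h}$ and $\bo{s}$ to agree up to the block structure of $p$ and $q$ respectively, so I would change variables to sum freely over the matched middle colouring and over the block-translation ambiguities. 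The factor $\bo{\chi}$ and $\bo{\rho}'$ survive essentially unchanged into $F_{pq}$, while the middle characters $\overline{\bo{\rho}}(\bo{h})\bo{\chi}'(\bo{s})$ produce a character sum over the $l$ middle points. I expect this sum to factor over the middle points and, by the orthogonality relations for characters, to collapse to the Kronecker delta $\delta(\bo{\rho}, \bo{\chi}')$ — each middle point contributes $\sum_{x}\overline{\rho_i}(x)\chi'_i(x) = \vert G\vert\,\delta(\rho_i, \chi'_i)$, accounting for the $\vert G\vert^{l}$ in the prefactor.

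It then remains to track the three scalar factors and check that they assemble into $\vert G\vert^{b(p)+b(q)-b(pq)+l}N^{\rl(p, q)}$. The $N^{\rl(p, q)}$ comes directly from Proposition \ref{prop:compositionTp}(3) (equivalently from the $N^{\rl(p,q)}$ in Lemma \ref{lem:compositionaveraged}); the $\vert G\vert^{l}$ comes from the orthogonality sums over the $l$ middle points as above; and the remaining power $\vert G\vert^{b(p)+b(q)-b(pq)}$ arises from comparing the normalizing conventions of $F$ on the factors (which carry $b(p)$ and $b(q)$ blocks) against $F_{pq}$ on the composite (which carries $b(pq)$ blocks), together with the loop and block-merging bookkeeping from Lemma \ref{lem:compositionaveraged}. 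I would present this by first reducing via admissibility, then inserting the composition formula for the $M_p$'s, then performing the middle-row change of variables and the character orthogonality, and finally collecting the powers of $\vert G\vert$ and $N$. The delicate part is the exponent bookkeeping: one must be careful that the blocks of $p$ and $q$ that merge in the composition (counted by $b(p)+b(q)-b(pq)$, adjusted by loops) are exactly those over which the middle character sum does \emph{not} already force a constraint, so that the surviving scalar matches the claimed prefactor. I would verify this by checking the formula on the identity partitions and on a single through-block as a sanity check before asserting the general exponent.
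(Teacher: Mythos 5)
Your overall plan coincides with the paper's proof: expand both factors, discard non-admissible tuples via Lemma \ref{lem:admissible}, insert the composition formula of Lemma \ref{lem:compositionaveraged}, and let character orthogonality over the $l$ middle points produce $\vert G\vert^{l}\delta(\bo{\rho},\bo{\chi}')$. The gap sits exactly in the step you yourself flag as delicate: the factor $\vert G\vert^{b(p)+b(q)-b(pq)}$ is never derived, and the explanation you offer for it would fail. There is no ``normalizing convention'' on $F_{p}$ depending on $b(p)$ --- the operators $F_{p}(\bo{\chi},\bo{\rho})$ are plain character-weighted sums of the $M_{p(\bo{g},\bo{h})}$ with no block-dependent prefactor; the normalized $\widetilde{F}_{p}$ carrying $\vert G\vert^{(k+l)/2-b(p)}$ only appears later, in the proof of Theorem \ref{thm:partitionqgroup}, and is chosen precisely to absorb the exponent you are trying to explain away.

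The actual mechanism has two ingredients, both absent from your sketch. First, Lemma \ref{lem:compositionaveraged} must be rewritten as an \emph{unrestricted} sum over all block recolourings $\bo{x}\leqslant p$, $\bo{y}\leqslant q$: since the resulting operator is independent of the admissible choice of $(\bo{x},\bo{y})$ and exactly $\vert G\vert^{\rl(p,q)+b(pq)}$ pairs give the same nonzero contribution, the prefactor $(\vert G\vert N)^{\rl(p,q)}$ becomes $\vert G\vert^{-b(pq)}N^{\rl(p,q)}$ in front of the free double sum; this is the only place $-b(pq)$ enters. Second, after the changes of variables $\bo{g}\mapsto\bo{x}^{d}.\bo{g}$, etc., the middle-row orthogonality leaves behind residual factors $\overline{\bo{\rho}}(\bo{x}^{u})\bo{\chi}'(\bo{y}^{d})$, which your computation $\sum_{x}\overline{\rho_i}(x)\chi'_i(x)=\vert G\vert\delta(\rho_i,\chi'_i)$ drops; the remaining sums over $\bo{x}$ and $\bo{y}$ then factor over the blocks of $p$ and of $q$, and each block contributes $\sum_{x\in G}(\prod\bo{\chi}_{b})(x)(\prod\overline{\bo{\rho}}^{b})(x)=\vert G\vert$ by admissibility, producing $\vert G\vert^{b(p)+b(q)}$. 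Without the counting argument for $\vert G\vert^{\rl(p,q)+b(pq)}$ and without tracking the residual characters into these block sums, the exponent cannot be assembled, and spot checks on identity partitions or a single through-block are not enough to pin down the general formula.
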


\begin{proof}
First note that because of Lemma \ref{lem:admissible}, we can assume that $(\bo{\chi}, \bo{\rho})$ and $(\bo{\chi}', \bo{\rho}')$ are $p$-admissible since otherwise both sides of the equality vanish. Let us also make a simple remark: in Lemma \ref{lem:compositionaveraged} the resulting averaged operator does not depend on the choice of $\bo{x}$ and $\bo{y}$. Thus, we can sum over all possible such colourings of the blocks. If the composition is non-zero, then there are exactly $\vert G\vert^{\rl(p, q) + b(pq)}$ colourings giving the same averaged operator. Thus, Lemma \ref{lem:compositionaveraged} can be restated as follows:
\begin{equation*}
M_{p(\bo{g}, \bo{h})}M_{q(\bo{s}, \bo{t})} = \vert G\vert^{-b(pq)}N^{\rl(p, q)}\sum_{\bo{x}\leqslant p}\sum_{\bo{y}\leqslant q}\delta(\bo{c}_{u}(\bo{x}.p(\bo{g}, \bo{h})), \bo{c}_{d}(\bo{y}.q(\bo{s}, \bo{t})))M_{pq(\bo{c}_{d}(\bo{x}.p(\bo{g}, \bo{h})), \bo{c}_{u}(\bo{y}.q(\bo{s}, \bo{t})))}.
\end{equation*}
Using this, the product $F_{p}(\bo{\chi}, \bo{\rho})F_{q}(\bo{\chi}', \bo{\rho}')$ becomes
\begin{equation*}
\vert G\vert^{-b(pq)}N^{\rl(p, q)}\sum_{\bo{g}, \bo{h}, \bo{s}, \bo{t}}\sum_{\bo{x}\leqslant p, \bo{y}\leqslant q}\bo{\chi}(\bo{g})\overline{\bo{\rho}}(\bo{h})\bo{\chi}'(\bo{s})\overline{\bo{\rho}}'(\bo{t})\delta(\bo{c}_{u}(\bo{x}.p(\bo{g}, \bo{h})), \bo{c}_{d}(\bo{y}.q(\bo{s}, \bo{t})))M_{pq(\bo{c}_{d}(\bo{x}.p(\bo{g}, \bo{h})), \bo{c}_{u}(\bo{y}.q(\bo{s}, \bo{t})))}.
\end{equation*}
We make the following changes of variables:
\begin{equation*}
\bo{g}' = \bo{x}^{d}.\bo{g}, \bo{h}' = \bo{x}^{u}.\bo{h}, \bo{s}' = \bo{y}^{d}.\bo{s}, \bo{t}' = \bo{y}^{u}.\bo{t}
\end{equation*}
to get
\begin{equation*}
\vert G\vert^{-b(pq)}N^{\rl(p, q)}\sum_{\bo{g}', \bo{h}', \bo{s}', \bo{t}'}\sum_{\bo{x}\leqslant p, \bo{y}\leqslant q}\delta(\bo{h}', \bo{s}')\bo{\chi}((\bo{x}^{d})^{-1}.\bo{g}')\overline{\bo{\rho}}((\bo{x}^{u})^{-1}.\bo{h}')\bo{\chi}'((\bo{y}^{d})^{-1}.\bo{s}')\overline{\bo{\rho}}'((\bo{y}^{u})^{-1}.\bo{t}')M_{pq(\bo{g}', \bo{t}')}.
\end{equation*}
Because of the $\delta$-symbol, one tuple in the sum disappears. Let us rename the variables without primes and make the changes of variables $\bo{x}\mapsto \bo{x}^{-1}$ and $\bo{y}\mapsto \bo{y}^{-1}$. Reordering the terms yields
\begin{equation*}
\vert G\vert^{-b(pq)}N^{\rl(p, q)}\sum_{\bo{g}, \bo{t}}\sum_{\bo{x}\leqslant p, \bo{y}\leqslant q}\left(\sum_{\bo{h}}\overline{\bo{\rho}}(\bo{x}^{u}.\bo{h})\bo{\chi}'(\bo{y}^{u}.\bo{h})\right)\bo{\chi}(\bo{x}^{d}.\bo{g})\overline{\bo{\rho}}'(\bo{y}^{u}.\bo{t})M_{pq(\bo{g}, \bo{t})}.
\end{equation*}
Using the orthogonality relations for characters, the term in parenthesis can be computed:
\begin{equation*}
\sum_{\bo{h}}\overline{\bo{\rho}}(\bo{x}^{u}.\bo{h})\bo{\chi}'(\bo{y}^{d}.\bo{h}) = \vert G\vert^{l}\delta(\bo{\chi}', \bo{\rho})\overline{\bo{\rho}}(\bo{x}^{u})\bo{\chi}'(\bo{y}^{d}).
\end{equation*}
We therefore now have to compute
\begin{eqnarray*}
X(\bo{g}, \bo{t}) & = & \sum_{\bo{x}\leqslant p, \bo{y}\leqslant q}\bo{\chi}(\bo{x}^{d}.\bo{g})\overline{\bo{\rho}}(\bo{x}^{u})\bo{\chi}'(\bo{y}^{d})\overline{\bo{\rho}}'(\bo{y}^{u}.\bo{t}) \\
& = & \left(\sum_{\bo{x}\leqslant p}
\bo{\chi}(\bo{x}^{d})\overline{\bo{\rho}}(\bo{x}^{u})\right)
\left(\sum_{\bo{y}\leqslant q}\bo{\chi}'(\bo{y}^{d})\overline{\bo{\rho}}'(\bo{y}^{u})\right)\bo{\chi}(\bo{g})\overline{\bo{\rho}}(\bo{t})
\end{eqnarray*}
Each term in parenthesis is a sum over colourings of the blocks of $p$ or $q$, hence splits as a product over the blocks:
\begin{eqnarray*}
\sum_{\bo{x}\leqslant p}\bo{\chi}(\bo{x}^{d})\overline{\bo{\rho}}(\bo{x}^{u}) & = & \prod_{b\subset p}\left(\sum_{x\in G}\bo{\chi}_{b}(x)\overline{\bo{\rho}}^{b}(x)\right) \\
& = & \vert G\vert^{b(p)}\prod_{b\subset p}\delta(\bo{\chi}_{b}, \bo{\rho}^{b})
\end{eqnarray*}
The same formula holds for the second term in parenthesis (with $b(q)$ instead of $b(p)$) so that performing the multiplication  and using the fact that the tuples are admissible, we get
\begin{equation*}
X(\bo{g}, \bo{t}) = \vert G\vert^{b(p)+b(q)}\bo{\chi}(\bo{g})\overline{\bo{\rho}}(\bo{t}).
\end{equation*}
Gathering everything then yields the result.
\end{proof}

For the other category operations, the interplay with the maps $F_{p}$ is the same as for the maps $T_{p}$. More precisely, we have the following facts:

\begin{lem}\label{lem:propertiesfp}
Let us denote by $\circ$ the concatenation of tuples. Then,
\begin{itemize}
\item $F_{p}(\bo{\chi}, \bo{\rho})\otimes F_{q}(\bo{\chi}', \bo{\rho}') = F_{p\otimes q}(\bo{\chi}\circ\bo{\chi}', \bo{\rho}\circ\bo{\rho}')$.
\item $F_{p}(\bo{\chi}, \bo{\rho})^{*} = F_{p^{*}}(\bo{\rho}, \bo{\chi})$.
\end{itemize}
\end{lem}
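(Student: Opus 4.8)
The plan is to deduce both identities by linearity from the corresponding statements for the individual building blocks $M_{p(\bo{g}, \bo{h})}$, and then to carry out the (routine) bookkeeping of the character coefficients. Concretely, I would first establish the two auxiliary identities
\begin{equation*}
M_{p(\bo{g}, \bo{h})}\otimes M_{q(\bo{g}', \bo{h}')} = M_{(p\otimes q)(\bo{g}\circ\bo{g}', \bo{h}\circ\bo{h}')}, \qquad M_{p(\bo{g}, \bo{h})}^{*} = M_{p^{*}(\bo{h}, \bo{g})},
\end{equation*}
which are precisely the compatibility of the averaged operators with horizontal concatenation and with the involution; these are essentially already contained in the proof that $\mathfrak{C}_{\wr}(\CC, G, N)$ is a concrete C*-tensor category with duals.

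For these auxiliary identities I would expand $M_{P} = \sum_{\bo{u}\leqslant P}T_{\bo{u}.P}$ and apply Proposition \ref{prop:compositionTp}. For the tensor product, part $(2)$ gives $T_{\bo{u}.p(\bo{g},\bo{h})}\otimes T_{\bo{v}.q(\bo{g}',\bo{h}')} = T_{(\bo{u}.p(\bo{g},\bo{h}))\otimes(\bo{v}.q(\bo{g}',\bo{h}'))}$; since horizontal concatenation merely juxtaposes the two pictures and concatenates the colourings of each row without merging any blocks, one has $(\bo{u}.p(\bo{g},\bo{h}))\otimes(\bo{v}.q(\bo{g}',\bo{h}')) = (\bo{u}\circ\bo{v}).\big((p\otimes q)(\bo{g}\circ\bo{g}', \bo{h}\circ\bo{h}')\big)$ together with $b(p\otimes q) = b(p)+b(q)$, so that letting $(\bo{u},\bo{v})$ range over $G^{b(p)}\times G^{b(q)} = G^{b(p\otimes q)}$ recovers exactly the averaging defining the right-hand side. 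For the involution, part $(1)$ gives $T_{\bo{u}.p(\bo{g},\bo{h})}^{*} = T_{(\bo{u}.p(\bo{g},\bo{h}))^{*}}$; flipping a partition upside down exchanges its two rows (hence exchanges the two colouring tuples), preserves its block set and the colour attached to each point, and commutes with the block action, whence $(\bo{u}.p(\bo{g},\bo{h}))^{*} = \bo{u}.\big(p^{*}(\bo{h},\bo{g})\big)$ and $b(p^{*}) = b(p)$; summing over $\bo{u}$ yields the second auxiliary identity.

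With these in hand, both claims reduce to reorganizing the defining sums. Inserting the first auxiliary identity into $F_{p}(\bo{\chi},\bo{\rho})\otimes F_{q}(\bo{\chi}',\bo{\rho}')$, the scalar coefficient factorizes multiplicatively as $\bo{\chi}(\bo{g})\bo{\chi}'(\bo{g}') = (\bo{\chi}\circ\bo{\chi}')(\bo{g}\circ\bo{g}')$ and likewise $\overline{\bo{\rho}}(\bo{h})\overline{\bo{\rho}'}(\bo{h}') = \overline{(\bo{\rho}\circ\bo{\rho}')}(\bo{h}\circ\bo{h}')$, so the double sum collapses into the single sum over concatenated tuples defining $F_{p\otimes q}(\bo{\chi}\circ\bo{\chi}', \bo{\rho}\circ\bo{\rho}')$. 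For the involution, taking adjoints turns the coefficient $\bo{\chi}(\bo{g})\overline{\bo{\rho}}(\bo{h})$ into $\overline{\bo{\chi}}(\bo{g})\bo{\rho}(\bo{h})$ (using $\overline{\chi(g)} = \overline{\chi}(g)$ for characters), which combined with $M_{p(\bo{g},\bo{h})}^{*} = M_{p^{*}(\bo{h},\bo{g})}$ and a relabelling of the summation variables is exactly $F_{p^{*}}(\bo{\rho},\bo{\chi})$. The only genuine subtlety, and the step I would be most careful about, is this last piece of bookkeeping: one must track that flipping $p$ swaps which tuple colours which row and simultaneously convert the barred/unbarred coefficients consistently, so that the $\overline{\bo{\rho}}$ of $F_{p}$ becomes the unbarred first argument of $F_{p^{*}}$ while $\bo{\chi}$ acquires the bar demanded by the definition. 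There is no combinatorial difficulty here, only the risk of a misplaced conjugate or a transposed index.
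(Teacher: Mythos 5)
Your proof is correct and is exactly the routine verification the paper has in mind: the paper states this lemma without proof, treating it as an immediate consequence of Proposition \ref{prop:compositionTp} and the compatibility of the block-averaging with horizontal concatenation and involution, which is precisely what you spell out. The bookkeeping of the conjugates and of the swapped colouring tuples in the second item is handled correctly.
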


Let us end this subsection with comments on the non-abelian case. The part concerning the space $\Mor(\V, \V)$ is easily generalised to this setting. If $\pi$ is an irreducible representation of $G$ and if $1\leqslant i\leqslant \dim(\pi)$, we can set
\begin{equation*}
P_{\pi, i} = \frac{1}{\vert G\vert}\sum_{g\in G}\pi(g)_{ii}M_{\vert^{g}_{e}}.
\end{equation*}
Denoting by $\Ir(G)$ a set of representatives of equivalence classes of irreducible representations of $G$, and by $\varepsilon$ the trivial representation of $G$, we obtain a generalisation of Lemma \ref{lem:basicprojections}.

\begin{lem}
The operators $(P_{\pi, i})_{\pi\in \Ir(G), 1\leqslant i\leqslant \dim(\pi)}$ are pairwise orthogonal self-adjoint projections summing to $1$. Moreover, for every non-trivial $\pi \in \Ir{G}$ the projection $P_{\pi}$ is minimal in $\Mor(\V, \V)$ and $P_{\varepsilon}$ is either minimal in $\Mor(\V, \V)$ (when $\CC(1,1)$ does not contain the two-block partition) or decomposes into two minimal projections (if $\CC(1,1)$  contains the two-block partition).
\end{lem}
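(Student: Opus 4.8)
The plan is to reduce the entire statement to the structure theory of the group algebra $\C[G]$, mirroring the abelian argument of Lemma~\ref{lem:basicprojections} but with full matrix blocks in place of one-dimensional ones. The starting point is the composition rule $M_{\vert^{g}_{e}}M_{\vert^{h}_{e}}=M_{\vert^{gh}_{e}}$ together with $(M_{\vert^{g}_{e}})^{*}=M_{\vert^{g^{-1}}_{e}}$, which together say that $g\mapsto M_{\vert^{g}_{e}}$ is a unitary representation of $G$ on $V$. Under the identification $V\simeq\C^{N}\otimes\C^{\vert G\vert}$ this representation is $\ii\otimes\lambda_{G}$, where $\lambda_{G}$ is the left regular representation, hence it is faithful. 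Therefore $\A:=\Span\{M_{\vert^{g}_{e}} : g\in G\}$ is a unital $*$-subalgebra of $\Mor(\V,\V)$ which is $*$-isomorphic to $\C[G]\simeq\bigoplus_{\pi\in\Ir(G)}M_{\dim(\pi)}(\C)$.

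Next I would identify each $P_{\pi,i}$ with the diagonal matrix unit $E^{\pi}_{ii}$ in the block $M_{\dim(\pi)}(\C)$ of this decomposition. Self-adjointness follows at once from $(M_{\vert^{g}_{e}})^{*}=M_{\vert^{g^{-1}}_{e}}$ and $\overline{\pi(g)_{ii}}=\pi(g^{-1})_{ii}$, while the relations $P_{\pi,i}^{2}=P_{\pi,i}$, $P_{\pi,i}P_{\pi',i'}=0$ for $(\pi,i)\neq(\pi',i')$, and $\sum_{\pi,i}P_{\pi,i}=1$ are all direct consequences of the Schur orthogonality relations $\frac{1}{\vert G\vert}\sum_{g}\pi(g)_{ab}\overline{\pi'(g)_{cd}}=\dim(\pi)^{-1}\delta_{\pi\pi'}\delta_{ac}\delta_{bd}$ together with the column relation $\sum_{\pi}\dim(\pi)\chi_{\pi}(g)=\vert G\vert\,\delta_{g,e}$. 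A short computation shows that the idempotent normalisation is $P_{\pi,i}=\frac{\dim(\pi)}{\vert G\vert}\sum_{g}\pi(g)_{ii}M_{\vert^{g}_{e}}$ (for $\dim(\pi)=1$ this recovers the formula of Lemma~\ref{lem:basicprojections}, and with the factor $\dim(\pi)$ dropped one only gets $P_{\pi,i}^{2}=\dim(\pi)^{-1}P_{\pi,i}$). This settles the first assertion.

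For the minimality claims I would first pin down $\Mor(\V,\V)=\Span\{M_{p} : p\in\CC^{G}(1,1)\}$. Since the only partitions in $P(1,1)$ are the identity and the two-block partition, after colouring and averaging one gets $\Mor(\V,\V)=\A$ when the two-block partition does not belong to $\CC(1,1)$, and $\Mor(\V,\V)=\A+\C M_{\sqcup}$ otherwise, where $M_{\sqcup}$ is the averaged two-block operator. In the first case every $E^{\pi}_{ii}$ is already a minimal projection of the matrix algebra $\A=\Mor(\V,\V)$, so all the $P_{\pi,i}$ are minimal. In the second case I would check that $M_{\sqcup}$ is a scalar multiple of the rank-one projection onto $\C(\xi\otimes\eta)$, where $\xi=\sum_{i}e_{i}\in\C^{N}$ and $\eta=\sum_{g}\delta_{g}\in\C^{\vert G\vert}$; as $\eta$ spans the trivial isotypic subspace of $\lambda_{G}$, this projection sits below $P_{\varepsilon}$ and commutes with $\A$. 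Hence the corner $P_{\varepsilon}\Mor(\V,\V)P_{\varepsilon}$ equals the two-dimensional commutative algebra $\C P_{\varepsilon}+\C M_{\sqcup}$, which forces $P_{\varepsilon}$ to split into exactly two minimal projections, whereas every $P_{\pi,i}$ with $\pi\neq\varepsilon$ is unaffected and remains minimal.

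The routine part is the Schur and Artin--Wedderburn bookkeeping of the first two paragraphs, which is essentially the non-abelian upgrade of Lemma~\ref{lem:basicprojections}. The genuinely delicate point, and the step I expect to be the main obstacle, is the two-block case: one must verify that $M_{\sqcup}$ commutes with every $M_{\vert^{g}_{e}}$ and satisfies $P_{\varepsilon}M_{\sqcup}P_{\varepsilon}=M_{\sqcup}$, so that it enlarges $\Mor(\V,\V)$ by a single dimension localised entirely in the trivial-isotypic corner and therefore refines $P_{\varepsilon}$ alone, leaving all other blocks intact.
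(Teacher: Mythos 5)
Your proof is correct and follows the same route as the paper, which simply invokes the argument of Lemma \ref{lem:basicprojections} with Schur orthogonality in place of character orthogonality; your version spells out the underlying Artin--Wedderburn picture of $\Span\{M_{\vert^{g}_{e}}\}\simeq\C[G]$ and the identification of the averaged two-block operator as a rank-one operator supported in the trivial-isotypic corner, both of which are exactly what the paper's one-line proof leaves implicit. One genuinely useful point you raise: as displayed in the paper, $P_{\pi,i}=\frac{1}{\vert G\vert}\sum_{g}\pi(g)_{ii}M_{\vert^{g}_{e}}$ satisfies $P_{\pi,i}^{2}=\dim(\pi)^{-1}P_{\pi,i}$, so the normalisation must indeed carry the extra factor $\dim(\pi)$ for the operators to be idempotent and to sum to $1$ (via $\sum_{\pi}\dim(\pi)\chi_{\pi}(g)=\vert G\vert\delta_{g,e}$) -- your correction is right and should be incorporated. (Also note the statement's ``$P_{\pi}$'' should read $P_{\pi,i}$, as your argument and the paper's subsequent discussion of multiplicities both assume.)
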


\begin{proof}
The proof is the same as for Lemma \ref{lem:basicprojections} except that one uses the general Schur orthogonality relations instead of the orthogonality of characters.
\end{proof}

Note that $P_{\pi, i}$ and $P_{\pi, j}$ are equivalent for any $1\leqslant i, j\leqslant \dim(\pi)$, so that for each $\pi\in \Ir(G)$, we have a basic representation $u^{\pi}$ appearing with multiplicity $\dim(\pi)$ in $\V$. This result suggests to consider averaged morphisms where the characters are replaced by (diagonal) coefficients of irreducible representations of $G$. However, we were not able to prove a result similar to Proposition \ref{prop:compositionfp} for these more general objects. The reason for that is that a product of characters is again a character, so that in the proof of Proposition \ref{prop:compositionfp} we can always use the orthogonality relations. In the non-abelian case, we have to compute scalar products between products of coefficients, i.e. coefficients of tensor products of irreducible representations. Since these are not irreducible, the strategy breaks down at this point.

\subsection{The partition quantum group picture}

Thanks to the operators $F_{p}$, we can now give a description of the morphism spaces between tensor products of basic representations.

\begin{prop}\label{prop:samespan}
Let $\bo{\chi} = (\chi_{1}, \dots, \chi_{k})$ and $\bo{\rho} = (\rho_{1}, \dots, \rho_{l})$ be tuples of characters of $G$. Then,
\begin{equation*}
\Mor(u^{\chi_{1}}\otimes \dots\otimes u^{\chi_{k}}, u^{\rho_{1}}\otimes \dots\otimes u^{\rho_{l}}) = \Span\{F_{p}(\bo{\chi}, \bo{\rho}), p\in \CC\}.
\end{equation*}
\end{prop}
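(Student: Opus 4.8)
The plan is to reduce the whole statement to a single computation: that of the compression of an averaged operator $M_q$ by the projections $P_{\chi_i}$ and $P_{\rho_j}$. By the compression identity recorded just before the statement, together with the defining property $\Mor(\V^{\otimes k},\V^{\otimes l}) = \Span\{M_q : q\in\CC^G(k,l)\}$ of $G\wr\G_N(\CC)$, we have
\begin{equation*}
\Mor(u^{\chi_1}\otimes\dots\otimes u^{\chi_k}, u^{\rho_1}\otimes\dots\otimes u^{\rho_l}) = \Span\{P_{\bo\rho}\,M_q\,P_{\bo\chi} : q\in\CC^G(k,l)\},
\end{equation*}
where $P_{\bo\chi} = P_{\chi_1}\otimes\dots\otimes P_{\chi_k}$ and $P_{\bo\rho} = P_{\rho_1}\otimes\dots\otimes P_{\rho_l}$. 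So it suffices to identify each compression $P_{\bo\rho}M_qP_{\bo\chi}$ as a scalar multiple of some $F_p(\bo\chi,\bo\rho)$ with $p\in\CC$, and conversely to exhibit each $F_p(\bo\chi,\bo\rho)$ as such a compression.

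The computational engine is two elementary facts about the colour-shift operators $M_{\vert^{g}_{e}}$. First, $M_{\vert^{g}_{e}}M_{\vert^{h}_{e}}=M_{\vert^{gh}_{e}}$ (already used in Lemma \ref{lem:basicprojections}), which together with the orthogonality relations for characters gives $P_\chi M_{\vert^{h}_{e}} = \overline{\chi(h)}\,P_\chi$ and $M_{\vert^{h}_{e}}P_\chi = \overline{\chi(h)}\,P_\chi$. Second, composing $M_{p(\bo a,\bo b)}$ on a single leg with a colour shift only shifts the corresponding colour: as a special case of Lemma \ref{lem:compositionaveraged} (here $\rl=0$ and no blocks are merged) one checks that precomposing shifts the upper colouring and postcomposing shifts the lower colouring, with no extra averaging and no scalar factor. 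Writing a coloured partition $q$ as $p(\bo a,\bo b)$ with underlying uncoloured $p\in\CC$ and expanding $P_{\bo\chi}$ and $P_{\bo\rho}$ into sums of colour shifts, these two facts let me resum over all upper and lower colourings weighted by the character values; after the substitutions $\bo g\mapsto\bo a\bo g$ and $\bo h\mapsto\bo b\bo h^{-1}$ this recombines exactly into the definition of $F_p(\bo\chi,\bo\rho)$, yielding
\begin{equation*}
P_{\bo\rho}\,M_{p(\bo a,\bo b)}\,P_{\bo\chi} = \vert G\vert^{-k-l}\,\overline{\bo\chi(\bo a)}\,\bo\rho(\bo b)\,F_p(\bo\chi,\bo\rho).
\end{equation*}

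This single identity closes both inclusions at once. On the one hand, every compression $P_{\bo\rho}M_qP_{\bo\chi}$ lies in $\Span\{F_p(\bo\chi,\bo\rho):p\in\CC\}$, since the uncoloured partition underlying any $q\in\CC^G$ lies in $\CC$; this gives the inclusion $\subseteq$. On the other hand, taking $q=\widetilde p$ (all colours equal to $e$, so $\bo a=\bo b=\bo e$ and the scalar is the nonzero constant $\vert G\vert^{-k-l}$) shows that $F_p(\bo\chi,\bo\rho)=\vert G\vert^{k+l}P_{\bo\rho}M_{\widetilde p}P_{\bo\chi}$ is itself an intertwiner, giving $\supseteq$. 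I expect the main obstacle to be the bookkeeping in the middle step: verifying carefully, via Lemma \ref{lem:compositionaveraged}, that composing the averaged operator $M_{p(\bo a,\bo b)}$ with the single-leg shifts produces exactly one averaged operator with shifted colouring (no stray loops, no residual block averaging, and the correct direction of the shift), and tracking the resulting scalar. Compatibility with Lemma \ref{lem:admissible} is then automatic: when $(\bo\chi,\bo\rho)$ is not $p$-admissible both $F_p(\bo\chi,\bo\rho)$ and the compression vanish.
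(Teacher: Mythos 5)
Your proposal is correct and follows essentially the same route as the paper: both expand the projections $P_{\chi_{i}}$, $P_{\rho_{j}}$ into weighted sums of the colour-shift operators $M_{\vert^{g}_{e}}$, use Lemma \ref{lem:compositionaveraged} to see that these shifts merely recolour $M_{p(\bo{a},\bo{b})}$ without loops or extra averaging, and resum via a change of variables to identify each compression as a scalar multiple of $F_{p}(\bo{\chi},\bo{\rho})$ (and conversely, via the trivially coloured $\widetilde{p}$). The only differences are cosmetic, namely your explicit tracking of the normalisation $\vert G\vert^{-k-l}$ and of the scalar $\overline{\bo{\chi}(\bo{a})}\bo{\rho}(\bo{b})$, which the paper records as $\bo{\chi}(\bo{s}^{-1})\bo{\rho}(\bo{t})$.
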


\begin{proof}
Recall that
\begin{equation*}
\Mor(u^{\chi_{1}}\otimes \dots\otimes u^{\chi_{k}}, u^{\rho_{1}}\otimes \dots\otimes u^{\rho_{l}}) = (P_{\chi_{1}}\otimes \dots\otimes P_{\chi_{k}})\Mor(\V^{\otimes k}, \V^{\otimes l})(P_{\rho_{1}}\otimes \dots\otimes P_{\rho_{k}}).
\end{equation*}
The right-hand side is generated by all the elements of the form
\begin{equation*}
R_{p}(\bo{\chi}, \bo{\rho}, \bo{s}, \bo{t}) = \sum_{\bo{g}, \bo{h}}\bo{\chi}(\bo{g})\bo{\rho}(\bo{h})M_{I(e, \bo{g})}M_{p(\bo{s}, \bo{t})}M_{I(e, \bo{h})},
\end{equation*}
where $I(e, \bo{g})$ is the partition $\vert^{\otimes k}$ coloured with $e$ on all lower points and $\bo{g}$ on the upper row. Computing the product of the averaged operators yields
\begin{equation*}
R_{p}(\bo{\chi}, \bo{\rho}, \bo{s}, \bo{t}) = \sum_{\bo{g}, \bo{h}}\bo{\chi}(\bo{g})\bo{\rho}(\bo{h})M_{p(\bo{s}\bo{g}, \bo{t}\bo{h}^{-1})} = \bo{\chi}(\bo{s}^{-1})\bo{\rho}(\bo{t})\sum_{\bo{g}', \bo{h}'}\bo{\chi}(\bo{g}')\bo{\rho}((\bo{h}')^{-1})M_{p(\bo{g}', \bo{h}'}) \in \C.F_{p}(\bo{\chi}, \bo{\rho}),
\end{equation*}
hence the result.
\end{proof}

This description is useful because it enables us to relate our construction with partition quantum groups as defined in \cite{freslon2014partition}. Let us briefly recall what we mean by this. A \emph{coloured partition} is a partition with the additional data of a colour (i.e. an element of a fixed set $\A$ called the \emph{colour set}) attached to each point. The category operations extend naturally to that setting with one extra rule: when a point is rotated from one row to the other, its colour is changed into the conjugate colour, where colour conjugation is a fixed involution on $\A$. A family of coloured partitions $\CC$ which is stable under the category operations is called a \emph{category of coloured partitions} and the set of partitions in $\CC$ with upper colouring $x_{1}\dots x_{k}$ and lower colouring $y_{1}\dots y_{l}$ is denoted by $\CC(x_{1}\dots x_{k}, y_{1}\dots y_{l})$. To any coloured partition $p$ we can associate the linear map $T_{p}$ as we did for partitions in $\CC^{G}$. Let $N\geqslant 1$ be an integer, let $\A$ be a colour set and let $\CC$ be a category of partitions coloured by $\A$. By \cite[Thm 3.2.7]{freslon2014partition}, there exists a unique compact quantum group $\G_{N}(\CC)$ together with representations $u^{x}$ for all $x\in\A$ such that for all $x_{1},\dots, x_{k}, y_{1},\dots, y_{l}\in \A$,
\begin{equation*}
\Mor(u^{x_{1}}\otimes \dots\otimes u^{x_{k}}, u^{y_{1}}\otimes \dots\otimes u^{y_{l}}) = \Span\{T_{p}, p\in\CC(x_{1}\dots x_{k}, y_{1}\dots y_{l})\}.
\end{equation*}

The link between this construction and partition wreath products will be given by the following examples of categories of partitions.

\begin{de}
Let $\Gamma$ be a discrete group and let $\CC$ be a category of (uncoloured) partitions. We define $\CC[\Gamma]$ to be the category of partitions coloured by $\Gamma$ (with colour conjugation given by the inverse) such that the underlying uncoloured partition is in $\CC$ and in each block the product of the colours of the upper row is equal to the product of the colours of the lower row as elements of $\Gamma$.
\end{de}

We are now ready to give a partition quantum group picture for partition wreath products.

\begin{thm}\label{thm:partitionqgroup}
Let $G$ be a finite abelian group and $\CC$ a category of partitions. There is an isomorphism of compact quantum groups $G\wr\G_{N}(\CC)\simeq \G_{N}(\CC[\h{G}])$.
\end{thm}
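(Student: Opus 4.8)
The plan is to invoke Woronowicz's Tannaka--Krein duality through the coloured-partition formalism of \cite{freslon2014partition}. By \cite[Thm 3.2.7]{freslon2014partition}, the quantum group $\G_{N}(\CC[\h{G}])$ is characterized up to isomorphism by the data of one fundamental representation $u^{\chi}$ for each colour $\chi\in\h{G}$, each acting on $\C^{N}$, subject to
$$\Mor(u^{\chi_{1}}\otimes \dots\otimes u^{\chi_{k}}, u^{\rho_{1}}\otimes \dots\otimes u^{\rho_{l}}) = \Span\{T_{p} : p\in\CC[\h{G}](\chi_{1}\dots\chi_{k}, \rho_{1}\dots\rho_{l})\}.$$
I would therefore exhibit inside $G\wr\G_{N}(\CC)$ a family of representations indexed by $\h{G}$ enjoying exactly this property and conclude by uniqueness. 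The natural candidates are the basic representations $u^{\chi}=(\ii\otimes P_{\chi})(\V)$ of Lemma \ref{lem:basicprojections}: they are indexed by $\h{G}$, each has dimension $N$, and their direct sum is $\V$, which generates $C(G\wr\G_{N}(\CC))$. Everything then reduces to matching the morphism spaces between their tensor products.

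Next I would fix the fibre-functor identifications. Writing $V=\C^{N}\otimes\C[G]$, one has $M_{\vert^{e}_{g}}=\ii\otimes\lambda_{g}$, whence $P_{\chi}=\ii\otimes\frac{1}{\vert G\vert}\sum_{g\in G}\chi(g)\lambda_{g}=\ii\otimes\Pi_{\chi}$, where $\Pi_{\chi}$ is the rank-one projection onto the $\chi$-eigenline of the left regular representation of $G$. Thus $P_{\chi}V\simeq\C^{N}$ through the isometry $W_{\chi}\colon e_{i}\mapsto e_{i}\otimes\xi_{\chi}$ with $\xi_{\chi}$ the normalized character vector. Conjugating morphisms by $W_{\bo\chi}=W_{\chi_{1}}\otimes\dots\otimes W_{\chi_{k}}$ and using Proposition \ref{prop:samespan}, the transported space $\Mor(u^{\chi_{1}}\otimes \dots, u^{\rho_{1}}\otimes \dots)$ becomes $\Span\{W_{\bo\rho}^{*}F_{p}(\bo\chi,\bo\rho)W_{\bo\chi} : p\in\CC\}$.

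The heart of the argument — and the step I expect to be the main obstacle — is the operator identity
$$W_{\bo\rho}^{*}F_{p}(\bo\chi,\bo\rho)W_{\bo\chi}=c_{p}(\bo\chi,\bo\rho)\,T_{p(\bo\chi,\bo\rho)},$$
with a nonzero scalar $c_{p}(\bo\chi,\bo\rho)$, holding whenever $(\bo\chi,\bo\rho)$ is $p$-admissible and with both sides vanishing otherwise by Lemma \ref{lem:admissible}. Here $p(\bo\chi,\bo\rho)$ is the uncoloured $p\in\CC$ equipped with the colourings $\bo\chi,\bo\rho$ of its two rows, viewed as an $\h{G}$-coloured partition, and $T_{p(\bo\chi,\bo\rho)}$ is the associated map of the coloured calculus. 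The decisive combinatorial point is that $p$-admissibility — the condition $\prod\bo\chi_{b}=\prod\bo\rho^{b}$ on every block $b$ — is exactly the defining relation of $\CC[\h{G}]$, namely that in each block the product of the colours of one row equals that of the other. Granting the identity, the transported morphism space equals $\Span\{T_{p}:p\in\CC[\h{G}](\chi_{1}\dots\chi_{k},\rho_{1}\dots\rho_{l})\}$, which is precisely the characterizing property of $\G_{N}(\CC[\h{G}])$, and the desired isomorphism follows.

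To establish the displayed identity I would expand $F_{p}(\bo\chi,\bo\rho)$ as the character-weighted average of the operators $M_{p(\bo g,\bo h)}$, evaluate on basis vectors $e_{\bo i}\otimes\xi_{\bo\chi}$, and collapse the $G$-sums by the orthogonality relations for characters of the abelian group $G$; this forces indices and colours to be constant along the blocks of $p$, reproducing the action of the coloured map $T_{p(\bo\chi,\bo\rho)}$. The delicate part is tracking the powers of $\vert G\vert$ and $N$. Rather than computing $c_{p}$ directly, I would check that the rescaled operators $\widetilde{F}_{p}(\bo\chi,\bo\rho)=\vert G\vert^{-b(p)-(k+l)/2}F_{p}(\bo\chi,\bo\rho)$ satisfy the relations $\widetilde{F}_{p}\widetilde{F}_{q}=N^{\rl(p,q)}\delta(\bo\rho,\bo\chi')\widetilde{F}_{pq}$, $\widetilde{F}_{p}\otimes\widetilde{F}_{q}=\widetilde{F}_{p\otimes q}$ and $\widetilde{F}_{p}^{*}=\widetilde{F}_{p^{*}}$ obeyed by the coloured maps $T_{p(\bo\chi,\bo\rho)}$ (compare Proposition \ref{prop:compositionfp} and Lemma \ref{lem:propertiesfp} against Proposition \ref{prop:compositionTp}); a short exponent computation shows this normalization is forced and makes $T_{p(\bo\chi,\bo\rho)}\mapsto\widetilde{F}_{p}(\bo\chi,\bo\rho)$ a unitary monoidal functor. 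Finally, the case where $\CC(1,1)$ contains the two-block partition, so that $u^{\varepsilon}$ is reducible, requires no separate treatment: the coloured framework allows the fundamental representation of colour $\varepsilon$ to be reducible, and the singleton partition coloured by $\varepsilon$ lies in $\CC[\h{G}]$ exactly in that case, so the decompositions match on both sides.
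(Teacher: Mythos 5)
Your proposal is correct and follows essentially the same route as the paper: both build a tensor functor sending the coloured map $T_{p(\bo{\chi}, \bo{\rho})}$ to a rescaled operator $\widetilde{F}_{p}(\bo{\chi}, \bo{\rho})$, use Proposition \ref{prop:compositionfp}, Lemma \ref{lem:propertiesfp} and Proposition \ref{prop:samespan} to verify it is a monoidal equivalence, and observe that $p$-admissibility is exactly the colouring condition defining $\CC[\h{G}]$. The only notable difference is your normalization $\vert G\vert^{-(k+l)/2-b(p)}$ versus the paper's $\vert G\vert^{(k+l)/2-b(p)}$; checking exponents against Proposition \ref{prop:compositionfp} shows yours is the one that actually yields the composition rule $\widetilde{F}_{p}\widetilde{F}_{q}=N^{\rl(p,q)}\delta(\bo{\rho},\bo{\chi}')\widetilde{F}_{pq}$ as stated.
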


\begin{proof}
First note that setting, for $p\in P(k, l)$, $\widetilde{F}_{p}(\bo{\chi}, \bo{\rho}) = \vert G\vert^{(k+l)/2-b(p)}F_{p}(\bo{\chi}, \bo{\rho})$, we obtain the nicer formula
\begin{equation*}
\widetilde{F}_{p}(\bo{\chi}, \bo{\rho})\widetilde{F}_{q}(\bo{\chi}', \bo{\rho'}) = N^{\rl(p, q)}\delta(\bo{\rho}, \bo{\chi}')\widetilde{F}_{pq}(\bo{\chi}, \bo{\rho}').
\end{equation*}
Consider now the functor from the category of representations of $\G_{N}(\CC[\h{G}])$ to the category of representations of $G\wr \G_{N}(\CC)$ sending the object $u^{\chi(1)}\otimes \dots u^{\chi(k)}$ to $(\ii\otimes P_{\bo{\chi}})\V^{\otimes k}(\ii\otimes P_{\bo{\chi}})$ and the morphism $T_{p(\bo{\chi}, \bo{\rho})}$ to $\widetilde{F}_{p}(\bo{\chi}, \bo{\rho})$. By Proposition \ref{prop:compositionfp} and Lemma \ref{lem:propertiesfp}, this is a tensor functor. Because the conditions defining $\CC[\h{G}]$ precisely mean that we only consider partitions coloured with admissible pairs of tuples of characters, this functor is faithful and by Proposition \ref{prop:samespan} it is surjective, hence the result.
\end{proof}

The interest of Theorem \ref{thm:partitionqgroup} is that the representation theory of partition quantum groups can be studied using the general techniques of \cite{freslon2013representation}. These techniques adapt straightforwardly to the general setting of partition quantum groups as explained in \cite{freslon2014partition}.

\subsection{Examples}\label{subsec:examples}

We will now describe explicitly some examples. A consequence of Theorem \ref{thm:partitionqgroup} is that when $\CC$ is a category of non-crossing partitions which is \emph{block-stable} in the sense of \cite[Def 4.7]{freslon2013fusion} (see also \cite[Def 3.4]{banica2012noncommutative}, where the name "multiplicative" was used for the same concept), then $G\wr \G_{N}(\CC)$ is a \emph{free quantum group} by \cite[Thm 4.18]{freslon2013fusion}. We will not enter the details of the general theory here but simply explain how one can compute the fusion rules of such a quantum group. Given a compact quantum group $\G$, we will be interested in the \emph{fusion semiring} $R^{+}(\G)$ of $\G$, which is the semiring generated by the equivalence classes of irreducible representations, with addition and multiplication given by the direct sum and the tensor product.

\subsubsection{Free partition quantum groups}

The basic tool to describe the representation theory of free partition quantum groups is the notion of projective partition.

\begin{de}
A coloured partition $p$ is said to be \emph{projective} if $p^{2} = p = p^{*}$. Two projective partitions $p, q\in \CC$ are said to be \emph{equivalent in $\CC$} if there exists another partition $r\in \CC$ such that $r^{*}r = p$ and $rr^{*} = q$.
\end{de}

Let us denote by $W(\CC)$ the set of all projective partitions in $\CC$ which have only one block. This can be seen as a set of words $w$ on the colour set $\A$, precisely those for which $\CC(w, w)$ (i.e. those coloured partitions for which both the top and bottom row are coloured by the word $w$) contains a one-block partition. The equivalence relation for projective partitions induces an equivalence relation on $W(\CC)$: two words $w$ and $w'$ are equivalent if there is a one-block partition in $\CC(w, w')$. We will denote by $S(\CC)$ the quotient of $W(\CC)$ by this equivalence relation. The set $S(\CC)$ is called the \emph{fusion set} of $\CC$ and completely determines the representation theory of the associated quantum group. To explain how, let us first introduce two operations on $S(\CC)$ (given two words $w,v$ on the set $\A$, $w.v$ denotes their concatenation).

\begin{de}
Let $p\in \CC(w, w')$ and $q\in \CC(v, v')$ be one-block projective partitions and let $[p]$ and $[q]$ be their images in $S(\CC)$.
\begin{itemize}
\item If there exists a one-block partition $s\in \CC(w.v, w'.v')$, then we set $[p]\ast [q] = [s]$. Otherwise, we set $[p]\ast [q] = \emptyset$.
\item Let $\overline{p}$ be the partition obtained by rotating $p$ upside down (note that all the colours are conjugated in this process). We set $\overline{[p]} = [\overline{p}]$.
\end{itemize}
These operations are well-defined (i.e. do not depend on the choice of representatives) by \cite[Lem 4.13 and Lem 4.14]{freslon2013fusion}.
\end{de}

Let $F(\CC)$ be the free monoid on $S(\CC)$, i.e. the set of all words on $S(\CC)$. The previous operations extend to the abelian semigroup $\N[F(\CC)]$ in the following way: if $w_{1}\dots w_{n}, w'_{1}\dots w'_{k} \in F(\CC)$, then
\begin{itemize}
\item $\overline{w_{1}\dots w_{n}} = \overline{w_{n}}\dots \overline{w_{1}}$
\item $(w_{1}\dots w_{n})\ast (w'_{1}\dots w'_{k}) = w_{1}\dots (w_{n}\ast w'_{1})\dots w'_{k}$
\end{itemize}
the latter being set equal to $0$ whenever one of the two words is empty, or if $w_{n}\ast w_{1}' = \emptyset$. We can now turn $\N[F(\CC)]$ into a \emph{semiring} $(R^{+}(\CC), +, \otimes)$ by setting for any $w, w' \in F(\CC)$
\begin{equation*}
w\otimes w' = \sum_{w = az, w' = \overline{z}b} ab + a\ast b.
\end{equation*}

The link between $R^{+}(\CC)$ and $\G_{N}(\CC)$ is given by \cite[Thm 4.18]{freslon2013fusion}. This requires block-stability, an extra assumption on $\CC$ mentioned above, meaning that if $p\in \CC$ and if $b$ is a block of $p$, then $b\in \CC$.

\begin{thm}
Assume that $\CC$ is a block-stable category of non-crossing coloured partitions. Then, there is an isomorphism of fusion semirings $R^{+}(\G_{N}(\CC)) \simeq R^{+}(\CC)$ sending the representation $u^{x}$ to the class of the partition $\vert$ coloured with $x$ on both ends.
\end{thm}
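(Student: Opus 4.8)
The plan is to apply Woronowicz's Tannaka--Krein reconstruction to transfer the combinatorial description of representations of the partition quantum group $\G_N(\CC)$ into genuine fusion-semiring data, exactly as in the framework of \cite{freslon2013fusion}. Concretely, since $\CC$ is a block-stable category of non-crossing coloured partitions, the theorem cited at the very end of the excerpt (\cite[Thm 4.18]{freslon2013fusion}) furnishes the isomorphism directly; so the real task of the proof, as I read it, is to verify that all hypotheses of that theorem are met and to trace through why the isomorphism sends $u^x$ to the class of the single strand $\vert$ coloured by $x$. I would therefore organise the argument around (i) identifying the irreducible representations with classes of one-block projective partitions, (ii) matching the addition and tensor operations, and (iii) pinning down the image of the generators.

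First I would recall that by the defining property of $\G_N(\CC)$, the morphism spaces $\Mor(u^{x_1}\otimes\cdots\otimes u^{x_k}, u^{y_1}\otimes\cdots\otimes u^{y_l})$ are spanned by the operators $T_p$ for $p\in\CC$. Using block-stability and non-crossingness, I would show that every such intertwiner algebra $\Mor(u^w,u^w)$ decomposes according to the projective partitions it contains, and that the minimal projections correspond precisely to equivalence classes of one-block projective partitions, i.e.\ to elements of the fusion set $S(\CC)$. This is where Proposition \ref{prop:linearindependence} (linear independence of the $T_p$ for non-crossing $p$, valid for $N\geqslant 4$) enters crucially: it guarantees that the combinatorial count of projective partitions is faithfully reflected in the dimension of the intertwiner spaces, so that distinct classes in $S(\CC)$ really do give inequivalent irreducibles. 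The upshot is a bijection between irreducible representations of $\G_N(\CC)$ and elements of $S(\CC)$, extended multiplicatively to $F(\CC)$ and hence to a map of sets between the bases of $R^+(\G_N(\CC))$ and $R^+(\CC)$.

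Next I would verify that this bijection is a semiring homomorphism. The additive structure is immediate since direct sums of representations correspond to formal sums of basis elements on both sides. For the multiplicative structure I would compute the tensor product $u^w\otimes u^{w'}$ by decomposing the relevant intertwiner space; the non-crossing combinatorics forces the decomposition into summands indexed by the factorisations $w=az$, $w'=\overline z b$, each contributing either a concatenation $ab$ or a fused term $a\ast b$ according to whether the inner colours admit a one-block partition. This is exactly the formula defining $\otimes$ on $R^+(\CC)$, and checking that the two sides agree term by term is the content of \cite[Thm 4.18]{freslon2013fusion}. The conjugation $\overline{(\cdot)}$ matches the contragredient via the rotation operation on partitions, which conjugates colours, consistent with $\overline{[p]}=[\overline p]$.

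The main obstacle is not any single computation but rather the careful bookkeeping needed to show that the recursive tensor-product formula for $R^+(\CC)$ coincides with the actual fusion rules — in particular, isolating which factorisations $w=az,\,w'=\overline z b$ survive and proving that the ``fused'' term $a\ast b$ appears with the correct (multiplicity-one) coefficient. Since $\CC$ is assumed block-stable and non-crossing, all of this is handled by the general machinery of \cite{freslon2013fusion}, and the cited theorem applies verbatim once block-stability is invoked; so in the write-up I would state that the result is a direct application of \cite[Thm 4.18]{freslon2013fusion}, and spend my effort only on confirming that the generator $u^x$ corresponds to the length-one word $[\vert_x]\in S(\CC)$, which follows because the strand coloured by $x$ is the unique one-block projective partition on the single-letter word $x$.
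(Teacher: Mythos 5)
Your proposal matches the paper's treatment: the paper states this theorem as a direct quotation of \cite[Thm 4.18]{freslon2013fusion} and offers no independent proof, exactly as you conclude after your (accurate) sketch of that theorem's internal mechanism. The only substantive addition you make — flagging that linear independence of the $T_{p}$ (hence $N\geqslant 4$) underlies the faithfulness of the combinatorial description — is a reasonable caveat that the paper leaves implicit.
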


\subsubsection{The permutation case}\label{subsec:permutationcase}

The case of partition wreath products by the quantum permutation group was already treated in Theorem \ref{thm:freewreath}: $G\wr S_{N}^{+}$ is isomorphic to $G\wr_{\ast}S_{N}^{+}$. This is true for any finite group $G$. If $G$ is abelian, then we can write it as $\h{\Gamma}$, where $\Gamma = \h{G}$ is the dual group. Then, Theorem \ref{thm:partitionqgroup} tells us that $\h{\Gamma}\wr_{\ast}S_{N}^{+}$ is the partition quantum group associated to the category of partitions $\CC[\Gamma]$. We therefore recover a particular case of the results of F. Lemeux in \cite{lemeux2013fusion}. Note that in that case, $S(\CC[\Gamma])$ is naturally isomorphic to $\Gamma$, with the conjugation given by the group inverse and the operation $\ast$ being the group operation.

\subsubsection{The orthogonal case}

Using the partition quantum group picture, it is easy to give an explicit description of partition wreath products by the quantum orthogonal group. In what follows below, $\hat{\ast}$ denotes the \emph{dual free product} of compact quantum groups introduced by Wang.

\begin{prop}\label{prop:abelianorthogonal}
Let $G$ be a finite abelian group and let $N\in \N$. Let $k = \vert\{\chi\in \h{G} : \chi= \overline{\chi}\}\vert$ and let $l = (\vert G\vert - k)/2$. Then $G \wr O_{N}^{+} \simeq (O_{N}^{+})^{\hat{\ast} k}\,\, \hat{\ast}\,\, (U_{N}^{+})^{\hat{\ast} l}$.
\end{prop}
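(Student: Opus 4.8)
The plan is to reduce the statement to a computation inside the partition–quantum–group picture furnished by Theorem~\ref{thm:partitionqgroup}, and then to recognise the resulting category of coloured partitions as a free product. Since $O_N^+ = \G_N(NC_2)$, where $NC_2$ is the category of all non-crossing pair partitions, Theorem~\ref{thm:partitionqgroup} gives
\[
G\wr O_N^+ \simeq \G_N(NC_2[\h{G}]).
\]
First I would unravel the admissibility condition defining $NC_2[\h{G}]$. A partition in $NC_2$ has only two-point blocks, so for each block the requirement that the product of the upper colours equals the product of the lower colours specialises as follows: a through-string must join two points carrying the \emph{same} character, whereas a cap or a cup must join two points carrying mutually conjugate characters $\chi$ and $\overline{\chi} = \chi^{-1}$. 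In particular every block of a partition in $NC_2[\h{G}]$ is \emph{monochromatic} for the decomposition of the colour set $\h{G}$ into conjugation orbits.

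Next I would organise $\h{G}$ into these orbits. There are exactly $k = |\{\chi\in\h{G} : \chi = \overline{\chi}\}|$ singleton orbits $\{\chi\}$, the self-conjugate characters, and $l = (\vert G\vert-k)/2$ two-element orbits $\{\chi,\overline{\chi}\}$ with $\chi\neq\overline{\chi}$. For a singleton orbit the induced sub-category is $NC_2$ on a single self-conjugate colour, which is precisely the category of $O_N^+$; for a two-element orbit, writing $\circ = \chi$ and $\bullet = \overline{\chi}$, the rule ``through-strings join equal colours, caps join conjugate colours'' is exactly the defining rule of the category of non-crossing pair partitions attached to $U_N^+$. Thus each orbit contributes a copy of $O_N^+$ or of $U_N^+$, with the multiplicities $k$ and $l$ as required.

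It then remains to show that $NC_2[\h{G}]$ is the free product of these orbit sub-categories and that, on the quantum group side, this corresponds to the dual free product $\hat{\ast}$. I would verify the categorical statement directly: $NC_2[\h{G}]$ consists of all non-crossing pair partitions whose blocks are monochromatic for the orbit decomposition and whose restriction to each orbit lies in the corresponding orbit sub-category, and, thanks to the non-crossing structure, any such partition decomposes as a tensor/composition of single-orbit pieces; hence $NC_2[\h{G}]$ is exactly the category generated by the orbit sub-categories inside coloured partitions over $\h{G}$, i.e.\ their free product. Interpreting a free product of non-crossing categories of coloured partitions as the dual free product of the associated partition quantum groups is the Tannaka--Krein counterpart of Wang's construction, and I would invoke the corresponding statement from \cite{freslon2014partition}. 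Combining these identifications yields
\[
G\wr O_N^+ \simeq \G_N(NC_2[\h{G}]) \simeq (O_N^+)^{\hat{\ast} k}\,\hat{\ast}\,(U_N^+)^{\hat{\ast} l}.
\]
The step I expect to be the main obstacle is this final free-product identification: one must check carefully that no admissible partition links points lying in distinct conjugation orbits—so that there is genuinely no mixing between the factors—and then apply the free-product-of-categories theorem with the correct bookkeeping distinguishing self-conjugate colours (giving orthogonal factors) from conjugate pairs (giving unitary factors).
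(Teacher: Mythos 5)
Your combinatorial analysis is correct and the first half of your argument coincides with the paper's: both start from Theorem \ref{thm:partitionqgroup} to identify $G\wr O_{N}^{+}$ with $\G_{N}(NC_{2}[\h{G}])$, and your unravelling of the admissibility condition (through-strings join equal characters, caps and cups join mutually conjugate ones, so every block lives inside a single conjugation orbit of $\h{G}$, giving $k$ self-conjugate colours and $l$ conjugate pairs) is exactly the right picture. Where you diverge is the final identification. The paper does not identify $NC_{2}[\h{G}]$ as a free product of categories; instead it observes that the only one-block projective partitions are the identity strings $p(\chi)$, with $\overline{[p(\chi)]}=[p(\overline{\chi})]$ and $[p(\chi)]\ast[p(\chi')]=\emptyset$, reads off the fusion semiring from the projective-partition machinery of \cite{freslon2013fusion} recalled in Subsection \ref{subsec:examples}, and then concludes from the obvious surjection $C((O_{N}^{+})^{\hat{\ast}k}\hat{\ast}(U_{N}^{+})^{\hat{\ast}l})\rightarrow C(G\wr O_{N}^{+})$ together with \cite[Lem 5.3]{banica1999representations}, since the induced map on fusion semirings is a dimension-preserving isomorphism. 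You instead invoke a general principle that a free product of non-crossing coloured categories of partitions corresponds to the dual free product of the associated quantum groups. That principle is morally correct, and you have rightly isolated it as the crux, but it is not something you can simply quote from \cite{freslon2014partition}: proving it amounts to showing that the maps $T_{p}$, for $p$ ranging over the orbit-monochromatic non-crossing pairings, span intertwiner spaces of exactly the dimensions predicted by the decomposition of representations of a dual free product --- i.e.\ to the same linear-independence and dimension count that the fusion-semiring argument packages (and which, like Proposition \ref{prop:linearindependence}, is sensitive to small $N$). So your route is viable and arguably more conceptual, but as written its last step replaces the paper's concrete argument by an appeal to a lemma whose verification is essentially the remaining content; you should either supply a precise reference and proof for that free-product statement, or fall back on the fusion-semiring comparison as the paper does.
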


\begin{proof}
We are considering the category of partitions $\CC[\h{G}] = NC_{2}[\h{G}]$ consisting of coloured non-crossing pairs. Consequently, the only one-block projective partitions are of the form $\vert$ coloured with a character $\chi$ on both ends. Let us denote by $p(\chi)$ this partition. Then, $\overline{[p(\chi)]} = [p(\overline{\chi})]$ and $[p(\chi)]\ast [p(\chi')] = \emptyset$. Let us order the self-conjugate characters of $G$ as $\chi_{1}, \dots, \chi_{k}$ and the non-self-adjoint ones as $\rho_{1}, \overline{\rho}_{1}\dots, \rho_{l}, \overline{\rho}_{l}$. There is a surjective $*$-homomorphism
\begin{equation*}
\Phi : C((O_{N}^{+})^{\hat{\ast} k} \hat{\ast} (U_{N}^{+})^{\hat{\ast} l}) \longrightarrow C(G \wr O_{N}^{+})
\end{equation*}
sending the fundamental representation of the $i$-th copy of $O_{N}^{+}$ to $u^{\chi_{i}}$ and the fundamental representation of the $j$-th copy of $U_{N}^{+}$ to $u^{\rho_{j}}$. Moreover, $\Phi$ induces an isomorphism between the fusion semirings. Since this isomorphism preserves the dimensions, $\Phi$ must be an isomorphism of compact quantum groups by \cite[Lem 5.3]{banica1999representations}, hence the result.
\end{proof}

\begin{rem}
One can also give a direct proof of this result using only the structure of the $(G, N)$-sudoku matrix and elementary manipulations using the orthogonal relations.
\end{rem}

Note that the partition wreath product $G\wr O_{N}^{+}$ only depends on the number of self-adjoint and non-self-adjoint characters of $G$. In particular, there is no canonical way to recover $C(G)$ as a subalgebra of $C(G\wr O_{N}^{+})$, in sharp contrast with the case of partition wreath products with $S_{N}^{+}$. The case of the classical orthogonal group is straightforward by abelianization.

\begin{cor}
Let $G$ be a finite abelian group and let $N\in \N$. Let $k = \vert\{\chi \in \h{G} : \chi= \overline{\chi}\}\vert$ and let $l = (\vert G\vert -k)/2$. Then, $G \wr O_{N} \simeq (O_N)^{\times k} \times (U_N)^{\times l}$.
\end{cor}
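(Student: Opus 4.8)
The plan is to deduce the statement from Proposition~\ref{prop:abelianorthogonal} by applying the abelianization functor described just before Proposition~\ref{prop:abelianization}. Since abelianization is functorial and sends isomorphic compact quantum groups to isomorphic ones, it suffices to compute the abelianizations of the two sides of the isomorphism
\begin{equation*}
G \wr O_{N}^{+} \simeq (O_{N}^{+})^{\hat{\ast} k}\,\, \hat{\ast}\,\, (U_{N}^{+})^{\hat{\ast} l}
\end{equation*}
and to check that they coincide.

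For the left-hand side I would invoke Proposition~\ref{prop:abelianization} with $\CC = NC_{2}$. Adjoining the crossing $\crosspart$ to $NC_{2}$ and closing under the category operations produces the category $P_{2}$ of all pair partitions, whose associated partition quantum group is $\G_{N}(P_{2}) = O_{N}$; equivalently, $O_{N}$ is the abelianization of $O_{N}^{+}$. Hence Proposition~\ref{prop:abelianization} gives that the abelianization of $G \wr O_{N}^{+}$ is exactly $G \wr O_{N}$, which is the left-hand side of the corollary.

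For the right-hand side the key point is that abelianization turns the dual free product $\hat{\ast}$ into the direct product $\times$. Indeed, $C(\G_{1}\hat{\ast}\G_{2})$ is the unital free product $C(\G_{1})\ast C(\G_{2})$, and its maximal abelian quotient is the tensor product $C(\G_{1})_{ab}\otimes C(\G_{2})_{ab}$ of the abelianizations: a commuting pair of $*$-homomorphisms out of $C(\G_{1})$ and $C(\G_{2})$ into a commutative C*-algebra is the same datum as a single $*$-homomorphism out of $C(\G_{1})_{ab}\otimes C(\G_{2})_{ab}$, and this quotient is compatible with the coproducts. Combining this with the standard identifications of the abelianizations of $O_{N}^{+}$ and $U_{N}^{+}$ as $O_{N}$ and $U_{N}$ respectively, the abelianization of $(O_{N}^{+})^{\hat{\ast} k}\,\hat{\ast}\,(U_{N}^{+})^{\hat{\ast} l}$ is $(O_{N})^{\times k}\times (U_{N})^{\times l}$.

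Putting the two computations together, the abelianizations of the two sides of the displayed isomorphism are $G\wr O_{N}$ and $(O_{N})^{\times k}\times(U_{N})^{\times l}$, and since they must agree we obtain $G\wr O_{N}\simeq (O_{N})^{\times k}\times(U_{N})^{\times l}$. The only step requiring genuine (if entirely standard) justification is the compatibility of abelianization with the dual free product and the coproducts; everything else is a direct appeal to the quoted results.
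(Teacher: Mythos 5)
Your argument is correct and is exactly the route the paper takes: its one-line proof invokes Proposition \ref{prop:abelianorthogonal} together with the abelianization procedure of Proposition \ref{prop:abelianization}, and your write-up simply supplies the (standard) details that abelianization sends $NC_{2}$ to $P_{2}$, hence $O_{N}^{+}$ to $O_{N}$, and turns the dual free product into the direct product. Nothing to change.
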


\begin{proof}
This follows from Proposition \ref{prop:abelianorthogonal} and Proposition \ref{prop:abelianization}.
\end{proof}

\subsubsection{The hyperoctahedral case}

We now turn to the hyperoctahedral case, which means that we will consider the category of partitions $\CC_{ev}$ consisting of all partitions with all blocks of even size. This was already treated in full generality in Theorem \ref{thm:hyperoctahedralgeneral} but we will recover the result in the abelian case in a completely different way. To describe the semiring $R^{+}(\CC[\h{G}])$ we use the following notation: if $w = w_{1}\dots w_{n}$ is a word on $\h{G}$, then $f(w)\in \h{G}$ denotes the product of the letters in $\h{G}$. With this, we can classify one-block projective partitions up to equivalence. For two words $w, w'$ on a colour set $\A$, let us denote by $\pi(w, w')$ the unique one-block partition in $P(w, w')$. Let us also denote by $\varepsilon\in \h{G}$ the trivial character of $G$.

\begin{lem}\label{lem:equivalencehhyperoctahedral}
Let $p\in NC_{ev}[\h{G}](w, w)$ be a one-block projective partition. If $\vert w\vert$ is even, then $p$ is equivalent to $\pi(f(w)\varepsilon, f(w)\varepsilon)$. Otherwise, it is equivalent to $\pi(f(w), f(w))$. Moreover, $\pi(f(w)\varepsilon, f(w)\varepsilon)$ and $\pi(f(w'), f(w'))$ are never equivalent for any words $w$ and $w'$.
\end{lem}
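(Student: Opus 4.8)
The plan is to work throughout with the unique one-block partition, which I will write $\pi(v,v')$, and to use the explicit description of the induced equivalence on $W(NC_{ev}[\h{G}])$ recalled just before the lemma: two words $v,v'$ are equivalent precisely when there exists a one-block partition lying in $NC_{ev}[\h{G}](v,v')$. The two membership conditions for $NC_{ev}[\h{G}]$ are exactly what I must monitor. A one-block partition is automatically non-crossing, so its underlying partition lies in $NC_{ev}$ if and only if its single block has even size, i.e.\ if and only if the total number of points $|v|+|v'|$ is even; and the colouring condition reduces, for a single block, to the equality $f(v)=f(v')$ of the products of the upper and lower colours. So the whole argument is bookkeeping of these two quantities, the parity of the word length and the product $f$.

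First I would establish the equivalence. Given a one-block projective partition $p=\pi(w,w)\in NC_{ev}[\h{G}](w,w)$, set $w_{0}=f(w)\varepsilon$ when $|w|$ is even and $w_{0}=f(w)$ when $|w|$ is odd, and consider the one-block partition $r=\pi(w,w_{0})$. Its block contains $|w|+|w_{0}|$ points, which equals $|w|+2$ or $|w|+1$ respectively, hence is even in both cases, so the underlying partition lies in $NC_{ev}$. Moreover the product of its upper colours is $f(w)$ and the product of its lower colours is $f(w_{0})=f(w)$ in either case, so $r\in NC_{ev}[\h{G}](w,w_{0})$. This single one-block partition witnesses $w\sim w_{0}$, which is exactly the assertion that $p$ is equivalent to $\pi(w_{0},w_{0})$; one also checks directly that $\pi(w_{0},w_{0})$ itself satisfies both conditions, so it is a legitimate one-block projective partition.

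It then remains to prove the non-equivalence statement, and this is where the parity constraint does the real work. Any equivalence between $\pi(f(w)\varepsilon,f(w)\varepsilon)$ and $\pi(f(w'),f(w'))$ would, by the description of the induced equivalence, have to be realised by a one-block partition in $NC_{ev}[\h{G}](f(w)\varepsilon,f(w'))$. But the single block of such a partition would contain $2+1=3$ points, an odd number, so it cannot belong to $NC_{ev}$. Hence no such partition exists, and the two families of normal forms are disjoint for all choices of $w$ and $w'$, regardless of whether their $f$-values agree.

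The only delicate point is that the even-block obstruction applies \emph{directly} to the witnessing partition, which presupposes that the induced equivalence on $W(NC_{ev}[\h{G}])$ is genuinely realised by one-block partitions rather than by arbitrary multi-block intertwiners; I expect this to be the main thing to pin down. Fortunately this is exactly the content of the description recalled before the lemma (itself resting on \cite[Lem.\ 4.13 and Lem.\ 4.14]{freslon2013fusion}), so I would simply invoke it, and no separate analysis of multi-block witnesses is needed. With that in hand, the proof reduces to the two elementary parity-and-product verifications above.
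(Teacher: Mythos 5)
Your proof is correct and takes essentially the same route as the paper's: the equivalence is witnessed by the one-block partition $\pi(w, f(w)\varepsilon)$ or $\pi(w, f(w))$ according to the parity of $\vert w\vert$, and the non-equivalence follows because a one-block partition in $NC_{ev}[\h{G}](f(w)\varepsilon, f(w'))$ would have a block of odd size. The point you flag as delicate --- that equivalence of one-block projective partitions is detected by a single one-block witness --- is indeed exactly what the discussion preceding the lemma (resting on the cited results of \cite{freslon2013fusion}) provides, so your invocation of it is legitimate.
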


\begin{proof}
Assume first that $|w|$ is even. Then, $\pi(w, f(w)\varepsilon)$ is even and the product of the upper colours is equal to the product of the lower colours, hence it belongs to $\CC[\h{G}]$, giving the equivalence. Assume now that $|w|$ is odd. Then, $\pi(w, f(w))$ is in $\CC[\h{G}]$, hence the equivalence. Eventually, $\pi(f(w)\varepsilon, f(w)\varepsilon)$ and $\pi(f(w'), f(w'))$ are not equivalent because $\pi(f(w)\varepsilon, f(w))$, being odd, is never in $\CC[\h{G}]$.
\end{proof}

Let us set, for $\chi\in \h{G}$, $\chi^{-} = [\pi(\chi, \chi)]$ and $\chi^{+} = [\pi(\chi \varepsilon, \chi \varepsilon)]$. The previous lemma implies that $S(\CC[\h{G}]) = \{\chi^{-}: \chi\in \h{G}\}\coprod\{\chi^{+},: \chi\in G\}$.

\begin{lem}\label{lem:directproductz2}
There is a group isomorphism $\Psi : \h{G}\times \Z_{2} \rightarrow S(\CC[\h{G}])$ sending $(\chi, -1)$ to $\chi^{-}$ and $(\chi, 1)$ to $\chi^{+}$.
\end{lem}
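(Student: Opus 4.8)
The plan is to obtain $\Psi$ by checking that the assignment $(\chi,s)\mapsto \Psi(\chi,s)$ intertwines the group multiplication of $\h{G}\times\Z_2$ with the operation $\ast$ on $S(\CC[\h{G}])$. By Lemma \ref{lem:equivalencehhyperoctahedral} the set $S(\CC[\h{G}])$ consists exactly of the pairwise distinct classes $\chi^-$ and $\chi^+$, $\chi\in\h{G}$, so $\Psi$ is already a bijection. Hence the whole content of the statement is that $\Psi$ is a homomorphism, and this reduces to computing $\ast$ on the representatives $\pi(\chi,\chi)$ and $\pi(\chi\varepsilon,\chi\varepsilon)$.

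The crucial preliminary observation I would record is that $\ast$ is \emph{never} $\emptyset$ on these classes. If $p=\pi(w,w)$ and $q=\pi(v,v)$ are one-block projective partitions in $\CC[\h{G}]$, the candidate for $[p]\ast[q]$ is the one-block partition $\pi(w.v,w.v)$: its single block has size $2(\vert w\vert+\vert v\vert)$, which is even, so the underlying partition lies in $NC_{ev}$, and the colouring condition holds automatically because $f(w.v)=f(w)f(v)$ is the product of the colours on \emph{both} rows. Thus $\pi(w.v,w.v)\in\CC[\h{G}]$ and $[p]\ast[q]=[\pi(w.v,w.v)]$.

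It then remains to identify this class via Lemma \ref{lem:equivalencehhyperoctahedral}, which says it depends only on the total colour $f(w.v)=f(w)f(v)\in\h{G}$ and on the parity of $\vert w.v\vert=\vert w\vert+\vert v\vert$, being $(f(w)f(v))^{+}$ when the length is even and $(f(w)f(v))^{-}$ when it is odd. Encoding the sign as $(-1)^{\vert w\vert}\in\Z_2\cong\{+,-\}$, a class is determined by its total colour in $\h{G}$ and by $(-1)^{\vert w\vert}\in\Z_2$, and both invariants are multiplicative under concatenation, since $f$ is multiplicative and $(-1)^{\vert w.v\vert}=(-1)^{\vert w\vert}(-1)^{\vert v\vert}$. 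Reading off the representatives ($\chi^-$ has $\vert w\vert=1$, $\chi^+$ has $\vert w\vert=2$) this says exactly $\chi^{s}\ast(\chi')^{s'}=(\chi\chi')^{ss'}$, the sign being the $\Z_2$-product; that is precisely $\Psi\big((\chi,s)(\chi',s')\big)$. Hence $\Psi$ is a bijective homomorphism, so an isomorphism.

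As a consistency check (logically superfluous once $\ast$ is shown to be the group law) I would note that the class $\varepsilon^{+}$ is the neutral element, matching $(\varepsilon,1)$, and that conjugation corresponds to inversion: rotating $\pi(w,w)$ upside down conjugates every colour, so $\overline{\chi^{\pm}}=(\chi^{-1})^{\pm}$, which is $\Psi$ applied to the inverse of $(\chi,\pm1)$. There is no deep obstacle in this lemma; the only point demanding care is the parity bookkeeping, which simultaneously guarantees that the composite block stays in $NC_{ev}$ (so that $\ast$ never returns $\emptyset$) and that the $\Z_2$-factor is read off with the correct sign.
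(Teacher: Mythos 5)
Your proof is correct and follows essentially the same route as the paper: both verify bijectivity, check that conjugation matches inversion, and derive the multiplication rules $\chi^{s}\ast\rho^{s'}=(\chi\rho)^{ss'}$ from Lemma \ref{lem:equivalencehhyperoctahedral}. The only difference is that you spell out why $\ast$ never returns $\emptyset$ (the concatenated one-block partition is automatically even and satisfies the colouring condition) and organise the four cases via the invariants $f(w)$ and $(-1)^{\vert w\vert}$, details the paper leaves implicit.
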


\begin{proof}
The map $\Psi$ is clearly a bijection, so that we only need to check its compatibility with the group structure. By definition, $\overline{\chi^{-}} = \overline{\chi}^{-}$ and $\overline{\chi^{+}} = \overline{\chi}^{+}$ so that inverses are preserved. Moreover, we have the following rules coming from Lemma \ref{lem:equivalencehhyperoctahedral}:
\begin{equation*}
\chi^{-}\ast\rho^{-} = (\chi\rho)^{+}, \chi^{-}\ast\rho^{+} = (\chi\rho)^{-}, \chi^{+}\ast\rho^{-} = (\chi\rho)^{-}, \chi^{+}\ast\rho^{+} = (\chi\rho)^{+}.
\end{equation*}
These are the same as the group law on $\h{G}\times \Z_{2}$, hence the result.
\end{proof}

We can now complete an alternative proof of Theorem \ref{thm:hyperoctahedralgeneral} (for abelian $G$).

\begin{prop}\label{prop:abelianhyperoctahedral}
Let $G$ be a finite abelian group and let $N\in \N$. Then, $G\wr H_{N}^{+}$ is isomorphic to $(G\times \Z_{2})\wr_{\ast} S_{N}^{+}$.
\end{prop}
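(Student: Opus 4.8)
The plan is to realise both sides as free partition quantum groups and to match their fusion semirings, following the strategy of Proposition \ref{prop:abelianorthogonal}. First I would rewrite the two quantum groups in the partition-quantum-group language. Since $G$ is abelian, Theorem \ref{thm:partitionqgroup} gives $G\wr H_N^+ = G\wr\G_N(NC_{ev})\simeq\G_N(NC_{ev}[\h{G}])$. On the other side, $G\times\Z_2$ is abelian with dual $\h{G}\times\Z_2$, so combining Theorem \ref{thm:freewreath} (which identifies $(G\times\Z_2)\wr_{\ast} S_N^+$ with $(G\times\Z_2)\wr\G_N(NC)$) with Theorem \ref{thm:partitionqgroup} yields $(G\times\Z_2)\wr_{\ast} S_N^+\simeq\G_N(NC[\h{G}\times\Z_2])$. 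Both $NC_{ev}$ and $NC$ are block-stable, so the two quantum groups are free quantum groups and their fusion semirings are computed by \cite[Thm 4.18]{freslon2013fusion}.

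Next I would match the fusion data. Lemma \ref{lem:directproductz2} identifies the fusion set $S(NC_{ev}[\h{G}])$ with the group $\h{G}\times\Z_2$, the operation $\ast$ being the group law and the conjugation the inverse. On the other hand, as recalled in Subsection \ref{subsec:permutationcase}, $S(NC[\Gamma])$ is naturally the group $\Gamma$ for any discrete $\Gamma$; with $\Gamma=\h{G}\times\Z_2$ this gives $S(NC[\h{G}\times\Z_2])\simeq\h{G}\times\Z_2$ with the same structure. Since the semiring $R^+(\CC)$ depends only on the triple $(S(\CC),\ast,\overline{\,\cdot\,})$, the resulting group isomorphism of fusion sets extends to an isomorphism of fusion semirings $R^+(\G_N(NC[\h{G}\times\Z_2]))\simeq R^+(\G_N(NC_{ev}[\h{G}]))$.

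The crucial additional point is that this isomorphism preserves dimensions. A short inspection of the morphism spaces $\Mor(u^{\chi}\otimes u^{\rho},u^{\chi}\otimes u^{\rho})$ shows that in $\G_N(NC_{ev}[\h{G}])$ every fusion-set element is realised by an $N$-dimensional irreducible, with the single exception of $\varepsilon^+$, the class of the identity of $\h{G}\times\Z_2$ under $\Psi$, which has dimension $N-1$ (a trivial subrepresentation splitting off only when the colour is trivial, so that the relevant cap-cup partition becomes admissible). The same phenomenon occurs on the other side: in $\G_N(NC[\h{G}\times\Z_2])$ the only generating corepresentation whose colour allows a pair of singletons is the one attached to the identity colour $(\varepsilon,1)$, whose associated irreducible has dimension $N-1$, all others having dimension $N$. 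Because $\Psi$ sends identity to identity, it matches the two distinguished $(N-1)$-dimensional classes, and is therefore dimension-preserving.

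Finally I would upgrade the fusion-semiring isomorphism to an isomorphism of compact quantum groups. Mirroring Proposition \ref{prop:abelianorthogonal}, I would construct a surjective $*$-homomorphism $\Phi$ from $C((G\times\Z_2)\wr_{\ast} S_N^+)=C(\G_N(NC[\h{G}\times\Z_2]))$ onto $C(G\wr H_N^+)=C(\G_N(NC_{ev}[\h{G}]))$ sending each generating corepresentation $u^{(\chi,t)}$ to a representation realising the fusion-set element $\Psi(\chi,t)$; surjectivity is immediate since the classes $\chi^-$ are the basic representations $u^{\chi}$, which already generate $C(G\wr H_N^+)$. As $\Phi$ induces the dimension-preserving isomorphism of fusion semirings established above, \cite[Lem 5.3]{banica1999representations} forces $\Phi$ to be an isomorphism. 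The main obstacle is precisely this last step: producing $\Phi$ as a genuine $*$-homomorphism, that is, checking that the chosen representations of $G\wr H_N^+$ satisfy exactly the defining relations of $\G_N(NC[\h{G}\times\Z_2])$. This reduces to identifying the two families of partition-generated intertwiner spaces, which is guaranteed by the isomorphism of fusion data (Lemma \ref{lem:directproductz2} together with the permutation case), since in a free quantum group all intertwiners are spanned by partition maps; the dimension count above is what makes Banica's lemma applicable, while block-stability and surjectivity are then routine.
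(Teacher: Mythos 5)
Your proposal is correct and follows essentially the same route as the paper: both identify the two sides as free partition quantum groups, use Lemma \ref{lem:directproductz2} to match the fusion sets $S(NC_{ev}[\h{G}])\simeq \h{G}\times\Z_{2}\simeq S(NC[\h{G}\times\Z_{2}])$, define $\Phi$ on the generating representations $u^{(\chi,-1)}\mapsto u^{\chi}$, and conclude with \cite[Lem 5.3]{banica1999representations}. The only soft spot is your justification of dimension preservation (inspecting $\Mor(u^{\chi}\otimes u^{\rho},u^{\chi}\otimes u^{\rho})$ yields multiplicities rather than dimensions of irreducibles); the cleanest argument is that the dimension function is a semiring homomorphism determined by its values on the generators $[u^{(\chi,-1)}]\leftrightarrow[u^{\chi}]$, all of which equal $N$ on both sides -- but the paper itself asserts this point without further detail.
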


\begin{proof}
Recall from Subsection \ref{subsec:permutationcase} that $(\h{G}\times \Z_{2})\wr_{\ast} S_{N}^{+}$ is the free partition quantum group associated to $NC[\h{G}\times \Z_{2}]$ and that the associated fusion set $S$ is $\h{G}\times \Z_{2}$. Let us consider the $*$-homomorphism
\begin{equation*}
\Phi : C((\h{G}\times \Z_{2})\wr_{\ast} S_{N}^{+}) \longrightarrow C(G\wr H_{N}^{+})
\end{equation*}
sending $u^{(\chi, -1)}$ to $u^{\chi}$. At the level of the associated fusion sets, $\Phi$ induces the map $\Psi$ from Lemma \ref{lem:directproductz2}. Thus, the induced map $\widetilde{\Psi} : R^{+}((\h{G}\times \Z_{2})\wr_{\ast} S_{N}^{+})\rightarrow R^{+}(G\wr H_{N}^{+})$ is an isomorphism. Since it is dimension preserving, we can once again conclude by \cite[Lem 5.3]{banica1999representations} and by the observation that for finite abelian groups we have $G\approx \h{G}$.
\end{proof}

Again of course one can use abelianization to recover a particular case of Corollary \ref{cor:abelianizedhyperoctahedral}.

\section{Generalisations}\label{sec:generalisation}

In this final section, we will generalise the partition wreath product construction in two steps. The first step is quite natural and the results of Section \ref{sec:construction} can be adapted to some extent. The second step will only be sketched in the simplest case, which already gives interesting new examples connected to quantum isometry groups.

\subsection{Partition wreath product relative to an action}

The idea of this first step is to use an arbitrary action of the finite group $G$ on a finite space $X$ in the definition of the averaged operators, instead of the action of $G$ on itself. So let $\alpha : G \curvearrowright X$ be an action and let $\CC$ be a category of uncoloured partitions. Colouring the points of the partition of $\CC$ by elements of $X$ and applying the same averaging procedure as in Definition \ref{de:averagedoperators} using $\alpha$ instead of the group multiplication yields operators $M_{p}^{\alpha}$. We then set
\begin{equation*}
\Mor_{G\wr_{\alpha}}(k, l) = \Span\{M_{p}^{\alpha}, p\in \CC^{\A}(k, l)\}.
\end{equation*}

\begin{prop}
Let $N\geqslant 1$ be an integer. Then, there is a concrete C*-tensor category with duals $\mathfrak{C}(\CC, \alpha, N)$ whose objects are the positive integers and whose morphism sets are precisely $\Mor_{G\wr_{\alpha}}(k, l)$. The associated compact quantum group is called the \emph{partition wreath product of $G$ by $\G_{N}(\CC)$ relative to $\alpha$} and is denoted by $G\wr_{\alpha}\G_{N}(\CC)$.
\end{prop}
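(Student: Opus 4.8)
The plan is to mirror the proof of the corresponding statement for $\mathfrak{C}_{\wr}(\CC,G,N)$ established above, replacing the left-multiplication action of $G$ on itself by the given action $\alpha\colon G\curvearrowright X$. Concretely, I would work on the vector space $V$ with basis $(e_{i}^{x})_{1\leqslant i\leqslant N,\, x\in X}$, let $T_{p}$ (for $p\in\CC^{X}$) be the associated operators between tensor powers of $V$, and set $M_{p}^{\alpha}=\sum_{(g_{1},\dots,g_{b(p)})\in G^{b(p)}}T_{(g_{1},\dots,g_{b(p)}).p}$, where a group element now acts on the colour $x\in X$ of a point through $\alpha$. Verifying that $\Span\{M_{p}^{\alpha}\}$ is a concrete C*-tensor category with duals splits, exactly as in the group case, into the four category operations. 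For the tensor product, involution and rotation the arguments are formally identical: the blocks of $p\otimes q$ are the disjoint union of those of $p$ and $q$, so $M_{p}^{\alpha}\otimes M_{q}^{\alpha}=M_{p\otimes q}^{\alpha}$; involution gives $(M_{p}^{\alpha})^{*}=M_{p^{*}}^{\alpha}$; and rotation sends $M_{p}^{\alpha}$ to the averaged operator of a rotated partition, which in particular places the cup/cap morphisms obtained from rotating coloured identity partitions inside the category, so that the category has duals (each object being its own conjugate). One should also check that the identity morphisms lie in the span, which follows by summing the operators $M_{\vert^{x}_{x}}^{\alpha}$ over a set of $\alpha$-orbit representatives, with the appropriate normalisation by stabiliser cardinalities.

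The crux, as before, is stability under composition, i.e.\ a replacement for Lemma~\ref{lem:compositionaveraged}. I would expand $M_{p}^{\alpha}M_{q}^{\alpha}=N^{\rl(p,q)}\sum_{\bo{u}\leqslant p,\ \bo{v}\leqslant q}\delta\big(\bo{c}_{u}(\bo{u}.p),\bo{c}_{d}(\bo{v}.q)\big)\,T_{(\bo{u}.p)(\bo{v}.q)}$ and analyse the set of \emph{matching pairs} $(\bo{u},\bo{v})$ for which the $\delta$-symbol does not vanish. The key observation is that $G^{b(pq)}$ acts on this set by multiplying, for each block $B$ of $pq$, all coordinates of $\bo{u}$ and $\bo{v}$ coming from the constituent blocks of $B$ by a common element $w_{B}$; since $\alpha_{w_{B}}$ applies the same colour-permutation to both sides of each middle point, this preserves the matching condition, and on the induced colouring of $pq$ it is precisely the block-wise action used to define $M_{pq}^{\alpha}$. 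Moreover this action is \emph{free} on the tuples $(\bo{u},\bo{v})$, so each orbit has size $|G|^{b(pq)}$ and summing $T_{(\bo{u}.p)(\bo{v}.q)}$ over one orbit reproduces exactly one operator $M_{pq(\bo{c})}^{\alpha}$. Accounting for the loops of the composition, which carry additional free colourings that are removed from $pq$, contributes a further constant factor as in the group case. Hence $M_{p}^{\alpha}M_{q}^{\alpha}$ is a scalar linear combination of operators $M_{pq(\bo{c})}^{\alpha}$ (or $0$), and since the underlying uncoloured partition $pq$ belongs to $\CC$, this combination lies in $\Mor_{G\wr_{\alpha}}(k,m)$, establishing closure.

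The step I expect to be the main obstacle is precisely identifying this orbit structure. In the group case ($X=G$) one can divide group elements and deduce that the difference $w_{B}$ is independent of the constituent block, so the action on matching pairs is \emph{transitive} and the product collapses to a single term $M_{pq}^{\alpha}$; for a general action only the relation $\alpha_{u_{b}}(h_{i})=\alpha_{v_{d}}(s_{i})$ is available, from which $v_{d}^{-1}u_{b}$ can be recovered only up to the stabiliser of the relevant colour, so transitivity may fail and the composition genuinely becomes a finite sum of several $M_{pq}^{\alpha}$'s. This is the sense in which the results of Section~\ref{sec:construction} adapt only ``to some extent'', but it is harmless for the present statement, since closure of $\Span\{M_{p}^{\alpha}\}$ under all four operations is all that is required. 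Once this is in place, the concrete C*-tensor category with duals $\mathfrak{C}(\CC,\alpha,N)$ is well defined, and Woronowicz's Tannaka--Krein theorem produces the associated compact quantum group $G\wr_{\alpha}\G_{N}(\CC)$.
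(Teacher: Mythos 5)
Your proposal follows the same route as the paper's (very terse) proof: tensor product, involution and rotation are immediate, and the whole content is the adaptation of Lemma \ref{lem:compositionaveraged} to the action $\alpha$, which is exactly what the paper asserts ``except that the scalar factor appearing in the product may not be the same''. Your write-up is a correct and much more detailed version of that argument, and on one point it is actually more careful than the paper: you note that when stabilisers are non-trivial the block-wise $G$-action on matching pairs need not be transitive, so that $M_{p}^{\alpha}M_{q}^{\alpha}$ can be a genuine linear combination of \emph{several} distinct operators $M_{pq(\bo{c})}^{\alpha}$ rather than a scalar multiple of a single one. This really happens: for $G=\Z_{2}$ acting on $X=\{a,b,c\}$ by swapping $a$ and $b$ and fixing $c$, taking $p=\vert(a,c)$ and $q=\vert(c,a)$ (in the paper's convention $p(\bo{g},\bo{h})$ with lower colouring first) one computes
\begin{equation*}
M_{\vert(a,c)}^{\alpha}M_{\vert(c,a)}^{\alpha}=T_{\vert(a,a)}+T_{\vert(a,b)}+T_{\vert(b,a)}+T_{\vert(b,b)}=M_{\vert(a,a)}^{\alpha}+M_{\vert(a,b)}^{\alpha},
\end{equation*}
a sum of two inequivalent averaged operators. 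As you say, this is harmless for closure of the span, and your organisation of the sum by the free $G$-action on each connected component of the middle gluing graph (with loop components contributing an overall constant) is the right way to see it; the only caveat is that the ``further constant factor'' coming from the loops is the number of admissible loop colourings, which is a multiple of $\vert G\vert^{\rl(p,q)}$ depending on the stabilisers rather than $\vert G\vert^{\rl(p,q)}$ itself, but it is still a constant common to all orbits, which is all that is needed.
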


\begin{proof}
One first has to check that the sets of morphisms are stable under composition. This is done exactly as in Lemma \ref{lem:compositionaveraged} except that the scalar factor appearing in the product may not be the same. The stability under tensor products and rotations is clear.
\end{proof}

\begin{rem}\label{rem:faithfulaction}
Note that we can always assume the action to be faithful. Indeed, let $H$ be its kernel. Then, we have an action $\widetilde{\alpha}$ of $G/H$ on $X$ and
\begin{equation*}
G\wr_{\alpha}\G_{N}(\CC) = (G/H)\wr_{\widetilde{\alpha}}\G_{N}(\CC).
\end{equation*}
Let also remark that for any subgroup $H$ of $G$, there is a natural inclusion
\begin{equation*}
H\wr_{\alpha_{\vert H}}\G_{N}(\CC)\subset G\wr_{\alpha}\G_{N}(\CC),
\end{equation*}
which comes from the reverse inclusion of the associated C*-tensor categories.
\end{rem}

Some results of Section \ref{sec:construction} carry on to this setting with suitable modifications. In particular, the abelianization procedure works exactly as in Proposition \ref{prop:abelianization}. There is also a "fundamental matrix" description of $G\wr_{\alpha}\G_{N}(\CC^{\A})$ but it is more complicated; in particular we obtain a block-type splitting with respect to orbits of the action. This is the content of the next proposition.

\begin{prop}\label{prop:actionsudoku}
Let $B_{N}(\CC, \alpha)$ be the universal C*-algebra generated by a $N\vert X\vert \times N\vert X\vert$ matrix $\V$ such that
\begin{itemize}
\item $\V_{i, x}^{j, y} = 0$ for all $i, j$ if $x$ and $y$ are not in the same orbit.
\item The matrix $(\V_{i, g_{1}.x}^{j, g_{2}.x})_{1\leqslant i, j\leqslant N, g_{1}, g_{2}\in G}$ is unitary and satisfies the $\CC$-relations for each $x\in X$.
\item $\displaystyle\sum_{h\in \Stab(y)}\V_{i, th.x}^{j, s.x} = \sum_{g\in \Stab(x)}\V_{i, t.y}^{j, sg.y}$ for any $s, t\in G$ and $x, y\in X$.
\end{itemize}
Then, $B_{N}(\CC, \alpha)$ is isomorphic to $C(G\wr_{\alpha}\G_{N}(\CC))$ and the pullback of the coproduct is given by
\begin{equation*}
\D(\V_{i, x}^{j, y}) = \sum_{k=1}^{N}\sum_{z\in X} \V_{i, x}^{k, z}\otimes \V_{k, z}^{j, y}.
\end{equation*}
\end{prop}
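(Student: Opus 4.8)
The plan is to follow the strategy of Theorem~\ref{thm:sudoku} and to show that $B_N(\CC,\alpha)$ and $C(G\wr_\alpha\G_N(\CC))$ have the same universal property. By construction the latter is the universal C*-algebra carrying a unitary matrix $\V$, with rows and columns indexed by pairs $(i,x)$ with $1\leqslant i\leqslant N$ and $x\in X$, such that $M_p^\alpha\in\Mor(\V^{\otimes k},\V^{\otimes l})$ for every $p\in\CC^X(k,l)$; its coproduct is moreover the unique one turning $\V$ into a representation, which forces the displayed formula $\D(\V_{i,x}^{j,y})=\sum_{k=1}^N\sum_{z\in X}\V_{i,x}^{k,z}\otimes\V_{k,z}^{j,y}$. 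It therefore suffices to prove that, for a matrix $\V$ over a C*-algebra, the requirement that every $M_p^\alpha$ be an intertwiner of $\V$ is equivalent to the conjunction of the three bullet points, the unitarity of $\V$ being subsumed by the first two. Once this equivalence is in place, the identity map on generators yields mutually inverse surjective $*$-homomorphisms and transports the coproduct.

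As in the proof of Theorem~\ref{thm:sudoku}, I would first reduce the infinite family of relations to a generating set. Using tensor products and compositions, whose behaviour is governed exactly as in Lemma~\ref{lem:compositionaveraged}, every $M_p^\alpha$ is obtained from two kinds of elementary morphisms: the single-strand colour shifts $M_{\vert_x^y}^\alpha$ for $x,y\in X$, and the ``diagonal'' operators $M_{\widetilde p}^\alpha$, where $p\in\CC$ and $\widetilde p$ is $p$ with all of its points coloured by a single orbit representative. Identifying $V$ with $\C^N\otimes\C^X$, the relation that $M_{\widetilde p}^\alpha$ be an intertwiner translates, as the base point runs over the orbit representatives, into the statement that each orbit block $(\V_{i,g_1.x}^{j,g_2.x})$ is a unitary matrix satisfying the $\CC$-relations, which is the second bullet point.

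The two remaining conditions both arise from the single-strand colour shifts, and matching the third one is the step I expect to be the main obstacle. A direct computation shows that $M_{\vert_x^x}^\alpha$ equals $\vert\Stab(x)\vert$ times the orthogonal projection of $V$ onto $\C^N\otimes\C^{G.x}$, so that demanding $\V$ commute with all these projections is precisely the orbit-vanishing of the first bullet point. For distinct colours, $M_{\vert_x^y}^\alpha$ carries the orbit-$y$ component of $V$ onto the orbit-$x$ component, and writing out $\V\circ(\ii\otimes M_{\vert_x^y}^\alpha)=(\ii\otimes M_{\vert_x^y}^\alpha)\circ\V$ on a basis vector and comparing the coefficient of each output vector forces one to track how the averaging over $g\in G$ reaches a given orbit point, collapsing the sum into sums over the stabilizers $\Stab(x)$ and $\Stab(y)$. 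This bookkeeping is delicate exactly because the stabilizers genuinely intervene and the resulting identity links the a priori unrelated orbit blocks to one another; in the regular-action case of Lemma~\ref{lem:presentationsudoku}, where the stabilizers are trivial and there is a single orbit, it degenerates to plain $G$-invariance. Carrying it out produces, for all $s,t\in G$, indices $i,j$ and colours $x,y\in X$, the relation $\sum_{h\in\Stab(y)}\V_{i,th.x}^{j,s.x}=\sum_{g\in\Stab(x)}\V_{i,t.y}^{j,sg.y}$ of the third bullet point; for $x=y$ it is automatically satisfied, confirming that it carries no content within a single orbit beyond the second bullet point.

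With the three conditions thus identified with the intertwiner relations for a generating family of averaged operators, the assignment $\V_{i,x}^{j,y}\mapsto\V_{i,x}^{j,y}$ defines mutually inverse surjective $*$-homomorphisms between $B_N(\CC,\alpha)$ and $C(G\wr_\alpha\G_N(\CC))$. Since on both sides the coproduct is the unique one making $\V$ a representation, the two coincide, giving the stated formula for $\D$ and completing the proof.
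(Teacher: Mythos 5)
Your proposal follows essentially the same route as the paper: reduce to the single-orbit-coloured partitions $\widetilde{p}$ (yielding the second bullet) and the single strands $\vert^{y}_{x}$ (yielding the first and third), and the stabilizer bookkeeping that you describe but do not write out is exactly the paper's computation, carried out by pairing the intertwiner identity against a basis vector and substituting $h'=t^{-1}h$, $g'=s^{-1}g$. One small correction to a side remark: the third bullet is vacuous only when $x=y$ (both sides then equal $\vert\Stab(x)\vert\,\V_{i,t.x}^{j,s.x}$), but for distinct $x,y$ lying in the same orbit it still imposes genuine $G$-invariance-type constraints on that orbit block, so it is not subsumed by the second bullet within a single orbit.
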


\begin{proof}
The proof follows the same strategy as in Theorem \ref{thm:sudoku}. The relations satisfied by $\V$ can all be obtained by using partitions $p\in \CC$ coloured with one element $x\in X$ and partitions of the form $\vert^{y}_{x}$ for $x, y\in X$.

Let us first consider the operator $\widetilde{M}_{p, x}$ obtained by colouring all the points of $p$ with $x$ and then averaging. It acts on $\Span\{e_{i}^{g.x}, g\in G\}$, hence the fact that it is an intertwiner is equivalent to the $\CC$-relations for the entries of the matrix $(\V_{i, g_{1}.x}^{j, g_{2}.x})_{g_{1}, g_{2}\in G}$ (once we see it is unitary, which we observe below).

We now turn to $M_{\vert^{y}_{x}}$ and compute
\begin{equation*}
\V\circ(\Id\otimes M_{\vert^{y}_{x}})(1\otimes e_{i}^{z}) = \V\left(\sum_{g.y = z}1\otimes e_{i}^{g.x}\right) = \sum_{i=1}^{N}\sum_{v\in X}\sum_{g.y=z}\V_{i, g.x}^{j, v}\otimes e_{j}^{v}
\end{equation*}
and
\begin{equation*}
(\Id\otimes M_{\vert^{y}_{x}})\circ\V\left(1\otimes e_{i}^{z}\right) = (\Id\otimes M_{\vert^{y}_{x}})\left(\sum_{l=1}^{N}\sum_{w\in X}\V_{i, z}^{l, w}\otimes e_{l}^{w}\right) = \sum_{l=1}^{N}\sum_{w\in X}\V_{i, z}^{l, w}\sum_{h.y=w}e_{l}^{h.x}.
\end{equation*}
Applying $(1\otimes e_{j_{0}}^{v_{0}})$ for some $j_{0}, v_{0}$ yields
\begin{equation}\label{eq:roughcondition}
\sum_{v\in X}\sum_{g.y=z}\V_{i, g.x}^{j_{0}, v_{0}}\delta_{v, v_{0}} = \sum_{w\in X}\sum_{h.y=w}\V_{i, z}^{j_{0}, w}\delta_{h.x, v_{0}}.
\end{equation}
If $v_{0}$ is not in the orbit of $x$, the right-hand side vanishes and we are left with
\begin{equation*}
\sum_{g.y=z}\V_{i, g.x}^{j_{0}, v_{0}} = 0.
\end{equation*}
Specializing to $y=x$ eventually gives
\begin{equation*}
\vert\{g\in G, g.x=z\}\vert\V_{i, z}^{j_{0}, v_{0}} = 0
\end{equation*}
and the first relation is proved (noticing that $z$ can be any element in the orbit of $x$), showing also that the matrices appearing in the second relation are unitary. Let us now assume that $v_{0}$ is in the orbit of $x$ and that $z$ is in the orbit of $y$ (otherwise both sides of \eqref{eq:roughcondition} vanish). There exist $t, s\in G$ such that $v_{0} = t.x$ and $z = t.y$ so that
\begin{align*}
\sum_{\underset{h.y=t.y}{g\in G}}\V_{i, h.x}^{j_{0}, s.y} & = \sum_{w\in X}\sum_{\underset{g.y=w, g.x=s.x}{g\in G}}\V_{i, t.y}^{j_{0}, w} \\
\sum_{\underset{h'.y=y}{h'\in G}}\V_{i, th'.x}^{j_{0}, s.x} & = \sum_{w\in X}\sum_{\underset{g'.x=x, sg'.y=w}{g'\in G}}\V_{i, t.y}^{j_{0}, w} \\
& = \frac{1}{\vert \Stab(y)\vert}\sum_{k\in G}\sum_{\underset{g'.x=x, sg'.y=k.y}{g'\in G}}\V_{i, t.y}^{j_{0}, k.y},
\end{align*}
using the changes of variables $h'=t^{-1}h$ and $g'=s^{-1}g$ together with the fact that $w$ must be in the orbit of $y$, hence has the form $k.y$ for some $k \in G$. Renaming $h', g', j_{0}$ as $h, g, j$ yields
\begin{equation*}
\sum_{h\in \Stab(y)}\V_{i, th.x}^{j, s.x} = \frac{1}{\vert \Stab(y)\vert }\sum_{g\in \Stab(x)}\sum_{\underset{k^{-1}sg.y=y}{k\in G}}\V_{i, t.y}^{j, k.y}.
\end{equation*}
The condition on $k$ is equivalent to $k\in sg.\Stab(y)$, i.e. $k.y=sg.y$ and the sum over $k$ simplifies with the factor $\vert \Stab(y)\vert^{-1}$ , giving the third relation.
\end{proof}

Let us now give a few examples of this construction:
\begin{itemize}
\item If $G$ acts on the set $X$ trivially, then $\V_{i, x}^{j, x} = \V_{i, y}^{j, y}$ for any $x, y\in X$ and all the other coefficients are zero. Thus, we simply have $G\wr_{\alpha}\G_{N}(\CC)\approx\G_{N}(\CC)$ with fundamental representation $\V = U^{\oplus \vert X\vert}$.
\item If the action $\alpha$ is free then all the stabilizers reduce to the neutral element, hence we get
\begin{equation*}
\V_{i, t.x}^{s.x} = \V_{i, t.y}^{j, s.y}.
\end{equation*}
In other words, $\V_{i, a}^{j, b}$ only depends on the unique element of $G$ sending $a$ to $b$. In particular, the matrices corresponding to different orbits are the same, thus the resulting quantum group is the same as the one corresponding to the restriction of the action to one orbit. This is equivalent to $G$ acting on itself, hence we get $G\wr\G_{N}(\CC)$.
\item If the action $\alpha$ is transitive, then we can assume that $X = G/H$ (with $\alpha$ being the translation action) for some subgroup $H$ of $G$. The kernel of this action is the normal core $Co(H)$ of $H$ in $G$ (i.e. the largest normal subgroup of $G$ which is contained in $H$) so that by Remark \ref{rem:faithfulaction}, the resulting quantum group is $(G/Co(H))\wr\G_{N}(\CC)$.
\end{itemize}

Decomposing a general action $\alpha : G \curvearrowright X$ as a disjoint union of transitive actions on the orbits and using the last point, it would be enough to understand the case of a faithful action. However, the defining equations for $\V$ do not simplify in that case as in the free case and we were not able to obtain a general structure theorem.

\subsection{Coloured partition wreath products}

The second way of generalising our earlier considerations is based on putting the whole construction in the framework of partition quantum groups by using coloured partitions from the very beginning. To do this, we will assume that the set on which $G$ acts is also connected to the category of partitions $\CC$, specifically being its colour set. So let $G$ be a finite group, let $\A$ be a colour set with a fixed involution, let $\CC^{\A}$ be a category of $\A$-coloured partitions and consider an action $\alpha : G \curvearrowright \A$. In order to build a C*-tensor category, we need to put a constraint on the action.

\begin{de}
An action $\alpha : G \curvearrowright \A$ is said to be a \emph{coloured action} if for any $g\in G$ and any $x\in \A$,
\begin{equation*}
\overline{g.x} = g.\overline{x}.
\end{equation*}
\end{de}

Given a coloured action, we can define operators $M_{p}^{\alpha}$ exactly as before and use them to construct a C*-tensor category $\mathfrak{C}(\CC, \alpha, N)$ (the coloured action condition is needed here for the stability under rotations) yielding the \emph{partition wreath product of $G$ by $\G_{N}(\CC^{\A})$ relative to $\alpha$}, denoted by $G\wr_{\alpha}\G_{N}(\CC^{\A})$. Contrary to the previous subsection, it is not straightforward to extend the fundamental matrix picture to this setting. This would require several matrices $\V^{x}$ indexed by elements of $\A$ in order to be able to make sense of the "$\CC^{\A}$-relations". Moreover, the $G$-invariance condition is quite unclear. We will not develop the full theory here but simply give an example illustrating what the general fundamental matrix picture may look like. This example will be the simplest possible, i.e. the colour set is $\A = \{\circ, \bullet\}$ with $\overline{\circ} = \bullet$ and $G = \Z_{2} = \{-1, 1\}$ acts by $-1.\circ = \bullet$, which is a coloured action. Then, the $\CC^{\circ, \bullet}$-relations are simply relations between $\V$ and its adjoint. Here is how the fundamental matrix should look like.

\begin{de}
Let $\CC^{\circ, \bullet}$ be a category of coloured partitions. We define $B_{N}(\CC^{\circ, \bullet}, \alpha)$ as the universal C*-algebra generated by a $2N \times 2N$ matrix $\V$ satisfying the $\CC^{\circ, \bullet}$-relations and such that for any $1\leqslant i, j\leqslant N$, any $x, y\in \{\circ, \bullet\}$,
\begin{equation*}
(\V_{i, x}^{j, y})^{*} = \V_{i, \overline{x}}^{j, \overline{y}}.
\end{equation*}
Endowed with the coproduct defined by
\begin{equation*}
\D(\V_{i, x}^{j, y}) = \sum_{k=1}^{N}\sum_{z\in X} \V_{i, x}^{k, z}\otimes \V_{k, z}^{j, y},
\end{equation*}
it is a compact quantum group.
\end{de}

We will prove in a particular case that $B_{N}(\CC^{\circ, \bullet}, \alpha)$ is indeed the partition wreath product relative to $\alpha$. Let $NC^{\circ, \bullet}_{s}$ be the category of non-crossing coloured partitions such that in each block the difference between the number of white and black points on each row is equal modulo $s$. We define similarly the category of partitions $NC^{\circ, \bullet}_{\infty}$ by the convention that equality module $\infty$ is just equality. The partition quantum group associated to $NC^{\circ, \bullet}_{s}$ is denoted by $H_{N}^{s+}$ and called a \emph{quantum reflection group}. These were introduced by T. Banica and R. Vergnioux in \cite{banica2009fusion}.

\begin{prop}\label{prop:abelianreflection}
Let $N$ and  $1\geqslant s\geqslant +\infty$ be integers. Then, $(B_{N}(\CC_{s}^{\circ, \bullet}, \alpha), \Delta)$ is isomorphic to $\Z_{2}\wr_{\alpha}H_{N}^{s+}$.
\end{prop}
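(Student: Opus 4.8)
The plan is to prove this exactly in the spirit of Theorem~\ref{thm:sudoku} and Proposition~\ref{prop:actionsudoku}: both sides are universal objects, and I will identify them by matching generators and relations. By Woronowicz's Tannaka--Krein theorem, $\Z_2\wr_\alpha H_N^{s+}$ is the compact quantum group whose fundamental representation $\V$ acts on $V=V^\circ\oplus V^\bullet\cong\C^{2N}$ and whose intertwiner spaces are $\Mor(\V^{\otimes k},\V^{\otimes l})=\Span\{M_p^\alpha : p\in NC^{\circ,\bullet}_s(k,l)\}$; equivalently $C(\Z_2\wr_\alpha H_N^{s+})$ is the universal C*-algebra generated by the entries of the $2N\times 2N$ matrix $\V$ subject to the relations making every $M_p^\alpha$ an intertwiner of $\V$. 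It therefore suffices to show that this family of relations coincides with the two defining families of $B_N(\CC_s^{\circ,\bullet},\alpha)$, namely the $\CC_s^{\circ,\bullet}$-relations (recall $\CC_s^{\circ,\bullet}=NC^{\circ,\bullet}_s$) together with the conjugation relation $(\V_{i,x}^{j,y})^*=\V_{i,\overline{x}}^{j,\overline{y}}$. Matching the coproducts is then automatic, since in both presentations $\Delta$ is the unique coproduct turning $\V$ into a representation and is given by the same formula.

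Since the assignment $p\mapsto M_p^\alpha$ is a tensor functor (this is precisely what makes $\mathfrak{C}(NC^{\circ,\bullet}_s,\alpha,N)$ a category), a matrix $\V$ has all $M_p^\alpha$ as intertwiners if and only if it has the averaged versions of a generating set of $NC^{\circ,\bullet}_s$ as intertwiners. I would split such a generating set into two classes: the \emph{monochromatic} generators, whose blocks each carry a single colour (these encode the magic/$S_N^+$-structure together with the mod-$s$ reflection relations of $H_N^{s+}$), and the \emph{duality} generators, that is the coloured cup and cap pairing $\circ$ with $\bullet$.

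For a monochromatic generator the decisive observation is the coloured analogue of the claim in the proof of Theorem~\ref{thm:sudoku}: under the identification $V\cong\C^{2N}$, the operator $\widetilde{T}_q$ attached to the underlying \emph{uncoloured} partition $q$ on the $2N$-dimensional space equals $M_p^\alpha$. Indeed, $M_p^\alpha=\sum_{g\in\Z_2^{b(p)}}T_{g.p}$ ranges over all ways of flipping the colour of each block, while $\widetilde{T}_q$ sums over all colourings that are constant on blocks for the combined index $(i,x)$; since the non-trivial element of $\Z_2$ acts on $\A$ as colour conjugation, these two sums agree. Hence imposing that the monochromatic averaged operators are intertwiners is exactly imposing the $\CC_s^{\circ,\bullet}$-relations on $\V$. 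The duality generators are genuinely two-coloured, and here the key point is again that $(-1).x=\overline{x}$: averaging the cup $\cup_{\circ\bullet}$ over $\Z_2$ gives $M^\alpha_{\cup}=T_{\cup_{\circ\bullet}}+T_{\cup_{\bullet\circ}}=\sum_i(e_i^\circ\otimes e_i^\bullet+e_i^\bullet\otimes e_i^\circ)$, and requiring this (together with the corresponding cap) to be an intertwiner of the unitary $\V$ says precisely that the conjugate representation $\overline{\V}$, whose entries are the $(\V_{i,x}^{j,y})^*$, coincides with $\V$ after swapping the two colours; because the pairing $e_i^\circ\leftrightarrow e_i^\bullet$ is diagonal in the index $i$, unravelling this identity yields the relation $(\V_{i,x}^{j,y})^*=\V_{i,\overline{x}}^{j,\overline{y}}$ with no transpose.

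I expect the main obstacle to lie in this last translation and in its converse. One must check carefully that the $\Z_2$-averaging of the duality morphisms produces \emph{exactly} relation~(b) and imposes nothing stronger, and, conversely, that relation~(b) together with the $\CC_s^{\circ,\bullet}$-relations suffices to make \emph{all} the $M_p^\alpha$ (not only the ones monochromatic on blocks) into intertwiners; the latter amounts to verifying that the monochromatic generators and the duality pairing indeed generate $NC^{\circ,\bullet}_s$ as a category, which I would take from the structure theory of \cite{freslon2013fusion,freslon2014partition}. Once both inclusions of relations are established, the resulting $*$-isomorphism $B_N(\CC_s^{\circ,\bullet},\alpha)\simeq C(\Z_2\wr_\alpha H_N^{s+})$ intertwines the coproducts, which gives the asserted isomorphism of compact quantum groups.
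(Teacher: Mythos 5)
Your strategy is genuinely different from the paper's. You attempt a direct Tannaka--Krein matching of the relations ``all $M_p^\alpha$, $p\in NC_s^{\circ,\bullet}$, are intertwiners of $\V$'' against the defining relations of $B_N(\CC_s^{\circ,\bullet},\alpha)$, in the spirit of Theorem \ref{thm:sudoku}. The paper instead applies a discrete Fourier transform over $\Z_s$ to the entries of $\V$, producing self-adjoint projections $Q_{i,x}^{j,y}(\chi)=\sum_k\chi(k)(\V_{i,x}^{j,y})^k$ which assemble into a $2sN\times 2sN$ magic unitary satisfying the $(\widehat{\Z}_s\rtimes\Z_2)$-invariance condition of Lemma \ref{lem:presentationsudoku}; this identifies $B_N(\CC_s^{\circ,\bullet},\alpha)$ with $(\widehat{\Z}_s\rtimes\Z_2)\wr S_N^+$, and the statement follows from Theorem \ref{thm:sudoku} together with the cited external results. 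Within your approach, the treatment of the duality generator is correct: the $\Z_2$-averaged cup is $\sum_{i,x}e_i^x\otimes e_i^{\bar x}$, and for a unitary $\V$ its invariance is precisely the relation $(\V_{i,x}^{j,y})^*=\V_{i,\bar x}^{j,\bar y}$.

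There is, however, a genuine gap in the step you present as settled, namely that ``imposing that the monochromatic averaged operators are intertwiners is exactly imposing the $\CC_s^{\circ,\bullet}$-relations on $\V$''. The $\CC_s^{\circ,\bullet}$-relations are unitarity together with $(\V_{i,x}^{j,y})^s=\V_{i,x}^{j,y}(\V_{i,x}^{j,y})^*$ being a projection; these involve adjoints of the entries and therefore arise from partitions of $NC_s^{\circ,\bullet}$ whose blocks carry both colours (for instance the four-point one-block partition in $NC_s^{\circ,\bullet}(\circ\bullet,\circ\bullet)$), not from monochromatic ones. Your identity $M_p^\alpha=\widetilde{T}_q$ holds only when every block of $p$ is single-coloured, and the family of uncoloured partitions underlying the monochromatic elements of $NC_s^{\circ,\bullet}$ (non-crossing, with the number of upper points congruent to the number of lower points modulo $s$ in each block) is not rotation-closed for $s>2$, so the relations it imposes are not the $\CC_s$-relations of any easy quantum group. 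Repairing this requires an explicit treatment of the mixed-colour blocks, rewritten through the conjugation relation, together with a proof of the generation statement you defer to the references: that every $M_q^\alpha$ with $q\in NC_s^{\circ,\bullet}$ lies in the tensor category generated by the averages of your chosen generators. The latter is exactly the kind of assertion that Remark \ref{rem:conjugationinvariance} warns is delicate for averaged operators and does not follow formally from a generating set of $NC_s^{\circ,\bullet}$ itself. As written, the argument is a plausible programme rather than a proof.
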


\begin{proof}
In fact, we will prove that $(B_{N}(\CC^{\circ, \bullet}, \alpha), \Delta)$ is isomorphic to $(\widehat{\Z}_{s}\rtimes\Z_{2})\wr S_{N}^{+}$, where $\Z_{2}$ acts on $\widehat{\Z}_{s}$ by inversion. By \cite[Thm 3.5]{banica2011quantum} and \cite[Rem 4.1]{mandal2015quantum}, this implies the result. We identify the colour set $\A$ with $\Z_{2}$ via $\circ\mapsto 1$ and $\bullet\mapsto -1$. Let $\V$ be the $2N\times 2N$ unitary matrix generating $C(\Z_{2}\wr_{\alpha}H_{N}^{s+})$ and recall that the $NC^{\circ, \bullet}_{s}$-relations are those making $\V$ unitary and such that
\begin{equation*}
(\V_{i, x}^{j, y})^{s} = \V_{i, x}^{j, y}(\V_{i, x}^{j, y})^{*}
\end{equation*}
is a projection for each $i,j=1,\ldots, N$ and $x,y \in \Z_2$. For a character $\chi$ on $\Z_{s}$ we set
\begin{equation*}
Q_{i, x}^{j, y}(\chi) =  \sum_{k=0}^{s-1}\chi(k)(\V_{i, x}^{j, y})^{k},
\end{equation*}
where again  $i,j=1,\ldots, N$ and $x,y \in \Z_2$.  The properties of the coefficients of $\V$ (each of these satisfies the equalities $(a^k)^* = a^{s-k}$, $k=1, \ldots, s$) imply that each element
$Q_{i, x}^{j, y}(\chi)$ is a self-adjoint projection.
This defines $2N\times 2N$ matrices $Q(\chi)$ which we can now use as blocks to build a $2sN\times 2sN$ matrix $\W = (Q_{(x, \chi), (y, \rho)})_{x,y \in \Z_2, \chi, \rho \in \widehat{\Z}_{s}}$, where
\begin{equation*}
(Q_{(x, \chi), (y, \rho)})_{i, j} = Q_{i, x}^{j, y}(\overline{\chi}\rho).
\end{equation*}
The matrix $\W$ is by construction a magic unitary and its blocks are indexed by $\A\times \widehat{\Z}_{s}$. Let us denote by $\varphi : \Z_{2}\rightarrow \rm{Aut}(\widehat{\Z}_{s})$ the homomorphism such that $\varphi_{-1}(\chi) = \overline{\chi}$. In the semi-direct product $\widehat{\Z}_{s}\rtimes_{\varphi}\Z_{2}$, we have
\begin{equation*}
(x, \chi)^{-1}(y, \rho) = (x^{-1}, \varphi_{x^{-1}}(\overline{\chi}))(y, \rho) = (x^{-1}y, \varphi_{x^{-1}}(\overline{\chi}\rho).
\end{equation*}
If $x=1$, then
\begin{equation*}
(Q_{(x, \chi), (y, \rho)})_{i, j} = Q_{i, 1}^{j, y}(\overline{\chi}\rho) = Q_{i, 1}^{j, 1^{-1}y}(\varphi_{1^{-1}}(\overline{\chi}\rho)) = (Q_{(1, \varepsilon), (x, \chi)^{-1}(y, \rho)})_{i, j}.
\end{equation*}
If $x=-1$, then
\begin{eqnarray*}
Q_{i, x}^{j, y}(\overline{\chi}\rho) & = & \sum_{k=0}^{s-1}\overline{\chi}(k)\rho(k)(\V_{i, -1}^{j, y})^{k} \\
& = & \sum_{k=0}^{s-1}\overline{\chi}(k)\rho(k)(\V_{i, 1}^{j, -y})^{k*} \\
& = & \sum_{k=0}^{s-1}\overline{\chi}(k)\rho(k)(\V_{i, 1}^{j, -y})^{s-k} \\
& = & \sum_{k=0}^{s-1}\overline{\rho}(k)\chi(k)(\V_{i, 1}^{j, -y})^{k} \\
& = & Q_{i, 1}^{j, -y}(\overline{\rho}\chi) \\
& = & Q_{i, 1}^{j, x^{-1}y}(\varphi_{x^{-1}}(\overline{\chi}\rho)).
\end{eqnarray*}
In both cases, the $\widehat{\Z}_{s}\rtimes_{\varphi}\Z_{2}$-invariance condition is satisfied by $\W$ so that using Proposition \ref{thm:sudoku} we get the announced isomorphism.
\end{proof}

Note that the above quantum groups appeared as quantum isometry groups of the free products of finite cyclic groups in \cite{banica2012quantum} and \cite{goswami2014quantum}. There is also a classical version of this result.

\begin{cor}
Let $N, s\geqslant 1$ be integers. Then, $\Z_{2}\wr_{\alpha}H_{N}^{s}$ is isomorphic to $(\widehat{\Z}_{2}\rtimes \Z_{2})\wr S_{N}$, where $\Z_{2}$ acts on $\widehat{\Z}_{s}$ by inversion.
\end{cor}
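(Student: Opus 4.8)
The plan is to deduce the corollary from Proposition~\ref{prop:abelianreflection} by abelianization, exactly as Corollary~\ref{cor:abelianizedhyperoctahedral} was deduced from Theorem~\ref{thm:hyperoctahedralgeneral}. Recall that passing to the maximal abelian quotient $C(\G)\to C(\G)_{ab}$ is functorial and sends an isomorphism of compact quantum groups to an isomorphism of the associated abelianizations. Hence it suffices to abelianize both sides of the isomorphism $\Z_{2}\wr_{\alpha}H_{N}^{s+}\simeq(\widehat{\Z}_{s}\rtimes\Z_{2})\wr S_{N}^{+}$ furnished by Proposition~\ref{prop:abelianreflection} and to check that the two resulting compact groups are those occurring in the statement.

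For the right-hand side I would argue as follows. Writing $G=\widehat{\Z}_{s}\rtimes\Z_{2}$, the object $G\wr S_{N}^{+}$ is the ordinary partition wreath product $G\wr\G_{N}(NC)$, so Proposition~\ref{prop:abelianization} applies verbatim: its abelianization is $G\wr\G_{N}(NC_{ab})$. Since adjoining the crossing $\crosspart$ to $NC$ generates the category of all partitions, we have $\G_{N}(NC_{ab})=S_{N}$, and Corollary~\ref{cor:classicalpermutation} identifies the resulting $G\wr S_{N}$ with the classical wreath product $(\widehat{\Z}_{s}\rtimes\Z_{2})\wr S_{N}$, which is the right-hand side of the corollary.

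For the left-hand side I would use that the relative construction $\wr_{\alpha}$ is compatible with abelianization in precisely the same manner as the plain partition wreath product, as observed just after the definition of $G\wr_{\alpha}\G_{N}(\CC)$. Thus the abelianization of $\Z_{2}\wr_{\alpha}H_{N}^{s+}=\Z_{2}\wr_{\alpha}\G_{N}(NC_{s}^{\circ,\bullet})$ is $\Z_{2}\wr_{\alpha}\G_{N}((NC_{s}^{\circ,\bullet})_{ab})$, where $(NC_{s}^{\circ,\bullet})_{ab}$ is the category of all $\{\circ,\bullet\}$-coloured partitions satisfying the same $s$-modular colour-balance condition. The compact group attached to this category is the classical reflection group $H_{N}^{s}$ (the abelianization of $H_{N}^{s+}$), so the left-hand side abelianizes to $\Z_{2}\wr_{\alpha}H_{N}^{s}$. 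Comparing the two computations then yields the stated isomorphism.

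The only step I expect to require genuine care is the claim that abelianization commutes with $\wr_{\alpha}$ and carries $H_{N}^{s+}$ to $H_{N}^{s}$; concretely one must check that adding the crossing to the defining coloured category forces all entries of the fundamental matrix to commute, so that the abelianized relations are exactly those defining $\Z_{2}\wr_{\alpha}H_{N}^{s}$. This is immediate from the proof of Proposition~\ref{prop:abelianization}, after which the argument becomes purely formal.
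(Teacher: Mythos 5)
Your proposal is correct and follows exactly the paper's route: the paper's entire proof is ``This follows from Proposition \ref{prop:abelianreflection} by the abelianization procedure,'' and you have simply spelled out the details (including the point, noted in the paper after Proposition \ref{prop:actionsudoku} is introduced, that abelianization is compatible with $\wr_{\alpha}$). Note also that you have silently and correctly read the statement's ``$\widehat{\Z}_{2}\rtimes\Z_{2}$'' as the intended ``$\widehat{\Z}_{s}\rtimes\Z_{2}$,'' consistent with the clause about the inversion action on $\widehat{\Z}_{s}$.
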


\begin{proof}
This follows from Proposition \ref{prop:abelianreflection} by the abelianization procedure.
\end{proof}

These results recover the particular case $s=2$ treated in Theorem \ref{thm:hyperoctahedralgeneral} since the action by inversion then becomes trivial. In view of Proposition \ref{prop:abelianreflection}, we can ask a more general question. Let $\Gamma$ be a discrete group and consider a finite group $G$ acting on a symmetric generating set $\Lambda$ of $\Gamma$. If we consider $\Lambda$ as a colour set with involution given by the inverse, then the action is coloured if and only if $(\alpha_{g}(\gamma))^{-1} = \alpha_{g}(\gamma^{-1})$ for all $g\in G$ and $\gamma\in\Gamma$.

\begin{question}
Is there an isomorphism $G\wr_{\alpha}(\widehat{\Gamma}\wr_{\ast}S_{N}^{+}) \simeq (G\ltimes_{\alpha}\widehat{\Gamma})\wr_{\ast}S_{N}^{+}$ for a coloured action as described above?
\end{question}

Let us conclude by mentioning a broader problem. We have mainly studied partition wreath products for categories of partitions which are either noncrossing or contain the simple crossing $\crosspart$. We know from the classification of orthogonal easy quantum groups \cite{raum2013full} that there are many more examples. It would be interesting to see what happens when they are used as the right-hand side of a partition wreath product. In particular, the so-called \emph{half-liberated} quantum groups may produce interesting new phenomena.

\bibliographystyle{amsplain}
\bibliography{../../quantum}

\providecommand{\bysame}{\leavevmode\hbox to3em{\hrulefill}\thinspace}
\providecommand{\MR}{\relax\ifhmode\unskip\space\fi MR }
\providecommand{\MRhref}[2]{%
  \href{http://www.ams.org/mathscinet-getitem?mr=#1}{#2}
}
\providecommand{\href}[2]{#2}
\begin{thebibliography}{10}

\bibitem{banica1999representations}
T.~Banica, \emph{{Representations of compact quantum groups and subfactors}},
  J. Reine Angew. Math. \textbf{509} (1999), 167--198.

\bibitem{banica2007free}
T.~Banica and J.~Bichon, \emph{{Free product formul\ae{} for quantum
  permutation groups}}, J. Inst. Math. Jussieu \textbf{6} (2007), no.~03,
  381--414.

\bibitem{banica2011quantum}
T.~Banica, J.~Bichon, and S.~Curran, \emph{Quantum automorphisms of twisted
  group algebras and free hypergeometric laws}, Proc. Amer. Math. Soc
  \textbf{139} (2011), 3961--3971.

\bibitem{banica2012quantum}
T.~Banica and A.~Skalski, \emph{Quantum isometry groups of duals of free powers
  of cyclic groups}, Int. Math. Res. Not. \textbf{2012} (2012), no.~9,
  2094--2122.

\bibitem{banica2012noncommutative}
T.~Banica, A.~Skalski, and P.~So{\l}tan, \emph{{Noncommutative homogeneous
  spaces : the matrix case}}, J. Geom. Phys. \textbf{62} (2012), no.~6,
  1451--1466.

\bibitem{banica2009liberation}
T.~Banica and R.~Speicher, \emph{{Liberation of orthogonal Lie groups}}, Adv.
  Math. \textbf{222} (2009), no.~4, 1461--1501.

\bibitem{banica2009fusion}
T.~Banica and R.~Vergnioux, \emph{Fusion rules for quantum reflection groups},
  J. Noncommut. Geom. \textbf{3} (2009), no.~3, 327--359.

\bibitem{bichon2004free}
J.~Bichon, \emph{Free wreath product by the quantum permutation group}, Algebr.
  Represent. Theory \textbf{7} (2004), no.~4, 343--362.

\bibitem{bloss2003g}
M.~Bloss, \emph{{$G$-colored partition algebras as centralizer algebras of
  wreath products}}, J. Algebra \textbf{265} (2003), no.~2, 690--710.

\bibitem{daws2012closed}
M.~Daws, P.~Kasprzak, A.~Skalski, and P.M. So{\l}tan, \emph{Closed quantum
  subgroups of locally compact quantum groups}, Adv. Math. \textbf{231} (2012),
  3473--3501.

\bibitem{freslon2013fusion}
A.~Freslon, \emph{{Fusion (semi)rings arising from quantum groups}}, J. Algebra
  \textbf{417} (2014), 161--197.

\bibitem{freslon2014partition}
\bysame, \emph{{On the partition approach to Schur-Weyl duality and free
  quantum groups}}, Arxiv preprint arXiv:1409.1346 (2014).

\bibitem{freslon2013representation}
A.~Freslon and M.~Weber, \emph{{On the representation theory of easy quantum
  groups}}, J. Reine Angew. Math. (2015).

\bibitem{goswami2014quantum}
D.~Goswami and A.~Mandal, \emph{{Quantum isometry group of dual of finitely
  generated discrete groups and quantum groups}}, arXiv preprint
  arXiv:1408.5683 (2014).

\bibitem{parvathi2004g}
A.J. Kennedy and M.~Parvathi, \emph{{$G$-vertex colored partition algebras as
  centralizer algebras of direct products}}, Comm. Algebra \textbf{32} (2004),
  no.~11, 4337--4361.

\bibitem{lemeux2013fusion}
F.~Lemeux, \emph{{Fusion rules for some free wreath product quantum groups and
  applications}}, J. Funct. Anal. \textbf{267} (2014), no.~7, 2507--2550.

\bibitem{lemeux2014free}
F.~Lemeux and P.~Tarrago, \emph{{Free wreath product quantum groups : the
  monoidal category, approximation properties and free probability}}, Arxiv
  preprint arXiv:1411.4124.

\bibitem{mandal2015quantum}
A.~Mandal, \emph{{Quantum isometry groups of dual of finitely generated
  discrete groups II}}, Arxiv preprint arXiv:1504.02240 (2015).

\bibitem{neshveyev2014compact}
S.~Neshveyev and L.~Tuset, \emph{{Compact quantum groups and their
  representation categories}}, Specialized courses, SMF, 2013.

\bibitem{nica2006lectures}
A.~Nica and R.~Speicher, \emph{{Lectures on the combinatorics of free
  probability}}, Lecture note series, vol. 335, London Mathematical Society,
  2006.

\bibitem{raum2013full}
S.~Raum and M.~Weber, \emph{{The full classification of orthogonal easy quantum
  groups}}, Arxiv preprint arXiv:1312.3857 (2013).

\bibitem{wang1998quantum}
S.~Wang, \emph{Quantum symmetry groups of finite spaces}, Comm. Math. Phys.
  \textbf{195} (1998), no.~1, 195--211.

\bibitem{woronowicz1987compact}
S.L. Woronowicz, \emph{{Compact matrix pseudogroups}}, Comm. Math. Phys.
  \textbf{111} (1987), no.~4, 613--665.

\bibitem{woronowicz1988tannaka}
\bysame, \emph{{Tannaka-Krein duality for compact matrix pseudogroups. Twisted
  SU(N) groups}}, Invent. Math. \textbf{93} (1988), no.~1, 35--76.

\bibitem{woronowicz1995compact}
\bysame, \emph{{Compact quantum groups}}, Sym{\'e}tries quantiques (Les
  Houches, 1995) (1998), 845--884.

\end{thebibliography}

\end{document}